\newtheorem{theorem}{Theorem}[section]
\newtheorem{lemma}[theorem]{Lemma}
\newtheorem{corollary}[theorem]{Corollary}
\newtheorem{conjecture}[theorem]{Conjecture}
\theoremstyle{definition}
\newtheorem{remark}[theorem]{Remark}
\def\Block{\operatorname{Block}}
\def\Start{\operatorname{Start}}
\def\Skeleton{\operatorname{Skeleton}}
\def\Bone{\operatorname{Bone}}
\def\Stack{\operatorname{Stack}}
\def\Slice{\operatorname{Slice}}
\def\Comp{\operatorname{Comp}}
\def\ps@pprintTitle{%
	\let\@oddhead\@empty
	\let\@evenhead\@empty
	\def\@oddfoot
	{\hbox to \textwidth%
		{\ifnopreprintline\relax\else
			\@myfooterfont%
			\ifx\@elsarticlemyfooteralign\@elsarticlemyfooteraligncenter%
			\hfil\@elsarticlemyfooter\hfil%
			\else%
			\ifx\@elsarticlemyfooteralign\@elsarticlemyfooteralignleft%
			\@elsarticlemyfooter\hfill{}%
			\else%
			\ifx\@elsarticlemyfooteralign\@elsarticlemyfooteralignright%
			{}\hfill\@elsarticlemyfooter%
			\else%
			Preprint submitted to \ifx\@journal\@empty%
			Elsevier%
			\else\@journal\fi\hfill\@date\fi%
			\fi%
			\fi%
			\fi%
		}%
	}%
	\let\@evenfoot\@oddfoot}
\title{Pull-Push Method: A new approach to Edge-Isoperimetric Problems}
\author{Sergei L. Bezrukov \and Nikola Kuzmanovski \and Jounglag Lim}
\date{\vspace{-5ex}}
\begin{document}

\maketitle
\begin{abstract}
	We prove a generalization of the Ahlswede-Cai local-global principle. A new technique to handle edge-isoperimetric
	problems is introduced which we call the pull-push method.
	Our main result includes all previously published results in this area as 
	special cases with the only exception of the
	edge-isoperimetric problem for grids.
	With this we partially answer a question of Harper on local-global principles.
	We also describe a strategy for further generalization of our results so that
	the case of grids would be covered, which would completely settle Harper's question.
\end{abstract}

\section{Introduction}

For a finite simple graph $G=(V,E)$, sets $A,B\subseteq V$ and  
integer $m\geq 0$ denote
\begin{align*}
    I_G(A,B) &= \{ \{u,v\}\in E \bigm | u\in A, v\in B  \},\\
    I_G(A) &= I_G(A,A),\\
    I_G(m) &= \max_{S\subseteq V, |S|=m} |I_G(S)|,\\
    \Theta (A) &= \{\{u,v\}\in E \bigm | u\in A, v\not \in A\},\\
    \Theta(m) &= \min_{S\subseteq V, |S|=m} |\Theta(m)|.
\end{align*}

In the sequel the index $G$ will be omitted whenever the graphs in
question are clear from the context. The following versions of the
edge-isoperimetric problem on graphs have been intensively studied in 
the literature:

\textit{Induced Edges Problem:} For a given $m\in \{1,\dots, |V|\}$ find a 
set $A\subseteq V$ such that $|A|=m$ and $I(|A|)=|I(A)|$.

\textit{Cut Edges Problem:} For a given $m\in \{1,\dots, |V|\}$ find a 
set $A\subseteq V$ such that $|A|=m$ and $\Theta(|A|)=|\Theta(A)|$.

Many authors have previously realized that these two problems are equivalent
for regular graphs, which follows from the next assertion.

\begin{lemma}
If $G=(V,E)$ is regular of degree $d$ and $A\subseteq V$ then 
\begin{align*}
    |\Theta(A)| + 2|I(A)| = d|A|.
\end{align*}
\end{lemma}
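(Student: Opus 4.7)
The plan is to prove this by a standard double-counting argument on the edges incident to vertices of $A$.

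First I would compute the sum $\sum_{v \in A} \deg(v)$ in two ways. Since $G$ is $d$-regular, this sum equals $d|A|$ directly.

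Next I would re-express the same sum by counting each edge according to how many of its endpoints lie in $A$. Every edge incident to some vertex of $A$ is either an edge of $I(A)$ (both endpoints in $A$) or an edge of $\Theta(A)$ (exactly one endpoint in $A$, the other in $V\setminus A$), and these two categories partition the edges counted. An edge of $I(A)$ contributes $2$ to $\sum_{v\in A}\deg(v)$, while an edge of $\Theta(A)$ contributes $1$. Hence
\begin{align*}
    \sum_{v\in A}\deg(v) = 2|I(A)| + |\Theta(A)|.
\end{align*}

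Equating the two expressions yields $d|A| = 2|I(A)| + |\Theta(A)|$, which is the claimed identity. There is no real obstacle here; the only thing to be careful about is confirming that every edge with at least one endpoint in $A$ falls into exactly one of the two sets $I(A)$ or $\Theta(A)$, which is immediate from the definitions.
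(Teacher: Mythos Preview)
Your argument is correct and is the standard double-counting proof of this elementary identity. The paper itself does not supply a proof of this lemma; it is stated as a well-known fact, so there is nothing further to compare.
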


A set $A\subseteq V$ is called \textit{optimal} if $I_G(|A|)=|I_G(A)|$.
We say that $G=(V,E)$ admits \textit{nested solutions} if there exists a chain 
of optimal subsets $A_1\subset A_2 \subset \cdots \subset A_{|V|}$. In this
case we call the graph $G$ \textit{isoperimetric}.

A total order on a graph $G$ is a bijection 
$\eta_G: V\rightarrow \{1,\dots, |V|\}$. 
For positive integers $k,l$ with $k<l$, and $u,v \in V$, we define
\begin{align*}
    \eta_G[k,l] &= \eta^{-1}_{G}(\{k,\cdots , l\}),\\
    \eta_G[k] &= \eta^{-1}_{G}(\{1,\cdots , k\}),\\
    u<_{\eta_G} v &\mbox{ iff } \eta_G(u) < \eta_G(v).
\end{align*}
We call $\eta_G[k]$ \textit{initial segment} of size $k$ of the order $\eta_G$. 
For an isoperimetric graph $G=(V,E)$ and its chain of nested solutions
$\emptyset=A_0\subset A_1\subset A_2 \subset \cdots \subset A_{|V|}$ 
there is a natural total 
order $\eta_G$, which we call an \textit{optimal order} on $G$, defined by
$\eta^{-1}_G(i) = u_i$ for $\{u_i\}=A_i\setminus A_{i-1}$, 
$i\in \{1,\dots, |V|\}$. 
Note that an optimal order depends not only on $G$, but also on the chain
of nested solutions which is not unique, in general.  

We study edge-isoperimetric problems on Cartesian products of graphs.
For graphs $G$ and $H$ their \textit{Cartesian product} is
a graph $G\square H$ defined as follows:
\begin{align*}
    V_{G\square H} &= V_G \times V_H\\
    E_{G\square H} &= \{ ((v_G,v_H), (u_G,u_H) ) \bigm |
                     v_G=u_G \text{ and } (v_H,u_H)\in E_H, \text{ or } 
                     v_H=u_H \text{ and } (v_G,u_G)\in E_G\}.
\end{align*}
Denote $G^d=G\square \cdots \square G$ ($d$ times), where
$G^0$ is a simple graph with one vertex.

We define a \textit{Lexicographic order} of dimension $d$, $\mathcal{L}^d$ on $\mathbb{R}^d$,
such that for tuples of real numbers $x= (x_1, \dots, x_d)$ and
$y= (y_1, \dots , y_d)$ we say that $x <_{{\mathcal L}^d} y$ iff
$x_1=y_1,\dots, x_i=y_i$ and $x_{i+1}<y_{i+1}$ for some $i\in \{0,1,\dots, d-1\}$.

Suppose that $G_1,\dots, G_d$ are isoperimetric graphs with optimal orders
$\eta_1,\dots, \eta_d$, respectively, and let $G=G_1\square\cdots\square G_d$.
The following total order ${\mathcal L}^d_G$ on the Cartesian product of graphs, 
called \textit{Lexicographic order on} $G$ of dimension $d$, plays an important 
role in various extremal problems.
For $v=(v_1,\dots, v_d)\in V_G$ and $u=(u_1,\dots, u_d)\in V_G$, we write 
$v <_{{\mathcal L}^d_G} u$ iff
$(\eta_1(v_1),\dots, \eta_d(v_d)) <_{{\mathcal L}^d} 
(\eta_1(u_1),\dots, \eta_d(u_d))$.
The next theorem is one of the earliest results on edge-isoperimetric problems. 
\begin{theorem}[Harper \cite{Harp_Cube}, Bernstein \cite{B_Cube}, Hart \cite{Hart_Cube}]\label{cube}
The order $\mathcal{L}_{G}^d$ is optimal for $G=K^d_2$.
\end{theorem}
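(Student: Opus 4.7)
The plan is to proceed by induction on $d$, the case $d = 1$ being immediate. For the inductive step, identify $K_2^d = K_2 \square K_2^{d-1}$ and partition $V$ into two layers $L_0, L_1$ according to the first coordinate, so that each $L_i$ induces a copy of $K_2^{d-1}$ and the remaining edges of $K_2^d$ form a perfect matching $M$ that canonically identifies $L_0$ with $L_1$. For $A \subseteq V$ with $|A| = m$, set $A_i = A \cap L_i$ and $a_i = |A_i|$; then
\[
|I(A)| = |I(A_0)| + |I(A_1)| + |\{v \in A_0 : M(v) \in A_1\}|,
\]
where the final term is bounded above by $\min(a_0, a_1)$.

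The first move is a compression. By the inductive hypothesis applied inside each layer, $A_0$ and $A_1$ may be replaced by the initial segments of $\mathcal{L}^{d-1}_{K_2^{d-1}}$ of sizes $a_0$ and $a_1$, without decreasing $|I(A_0)| + |I(A_1)|$. Since such initial segments are nested and the labels of $L_0, L_1$ may be swapped, I may further assume $a_0 \geq a_1$ and that $A_1$ lies inside the $M$-image of $A_0$, making the cross-term attain its upper bound $a_1$. The whole question then reduces to showing that the function
\[
F(a_0) := I_{K_2^{d-1}}(a_0) + I_{K_2^{d-1}}(m - a_0) + (m - a_0)
\]
is maximized, over $\lceil m/2 \rceil \leq a_0 \leq \min(m, 2^{d-1})$, at $a_0^\ast = \min(m, 2^{d-1})$, which is precisely the split realized by the lex initial segment of size $m$ in $K_2^d$.

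The main obstacle is this last combinatorial optimization. A naive one-step shift of a single vertex from the smaller to the larger layer is insufficient globally, because the marginals $\Delta(k) := I_{K_2^{d-1}}(k+1) - I_{K_2^{d-1}}(k)$ are not monotone in $k$ (already for $d - 1 = 2$ they read $0, 1, 1, 2$, so a one-step ``pull'' can strictly decrease $F$). My plan is to strengthen the inductive hypothesis to include the explicit recursion $I_{K_2^j}(m) = I_{K_2^{j-1}}(2^{j-1}) + I_{K_2^{j-1}}(m - 2^{j-1}) + (m - 2^{j-1})$ when $m > 2^{j-1}$ and $I_{K_2^j}(m) = I_{K_2^{j-1}}(m)$ otherwise, and then to compare both sides of $F(a_0) \leq F(a_0^\ast)$ by expanding through this recursion in the two cases $m \leq 2^{d-1}$ and $m > 2^{d-1}$. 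Each expansion reduces the inequality to the inductive hypothesis applied inside $K_2^{d-1}$, closing the induction and establishing that $\mathcal{L}^d_{K_2^d}$ is optimal.
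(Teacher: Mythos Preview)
The paper does not give its own proof of this statement; it is cited as a classical result of Harper, Bernstein, and Hart. Later the paper recovers it as Corollary~\ref{cubeCor}, an immediate specialisation of the main local--global principle (Theorem~\ref{mainResult} via Corollary~\ref{lgb_Atomic} with atomic partitions): one checks the trivial two-dimensional case $K_2\square K_2$, and the general machinery then yields optimality of $\mathcal{L}^d$ for all $d\ge 3$.

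Your route is the classical direct induction specific to the hypercube, and is genuinely different from the paper's. Compressing each layer to a lex segment and reducing to the one-variable optimisation of $F(a_0)$ is correct; this is essentially the Harper/Bernstein line of argument. The trade-off is that the paper's approach needs heavy general machinery but only a trivial base case, whereas yours is elementary and self-contained but puts all the work into the final optimisation --- exactly the step you flag as the main obstacle.

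That step is viable but less automatic than you suggest. Expanding $I_{K_2^{d-1}}$ through the recursion does reduce the inequality to the inductive hypothesis, but only after a case split on whether each of $a_0$, $m-a_0$, and $m$ exceeds $2^{d-2}$; each sub-case has to be matched to the appropriate instance (case $m'\le 2^{d-2}$ or $m'>2^{d-2}$) of the claim one dimension down. For $m>2^{d-1}$ the cleanest way to close the argument is not the recursion alone but complementation in the regular graph $K_2^d$ (the complement of a two-layer compressed set of size $m$ is, after swapping layers, a two-layer compressed set of size $2^d-m<2^{d-1}$), which reduces Case~2 to Case~1. So your plan works, but ``expand and reduce to the inductive hypothesis'' hides a short case analysis that you should make explicit.
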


A similar result was obtained later for the powers of larger cliques
and products of some other graphs, where the lexicographic order was proved
to be optimal. At the end of the twentieth century Ahlswede and Cai
established the optimality of the lexicographic order in the most general
form, called by them the \textit{local-global principle}, from which
Theorem \ref{cube} and many similar results follow.

\begin{theorem}[Ahlswede-Cai \cite{AC_LGB}]\label{AC_LGP}
Let $G_1,\dots, G_d$ be isoperimetric graphs and $G=G_1\square\cdots\square
G_d$. If the order ${\mathcal L}^2_{G_i \square G_j}$ is optimal for all 
$i,j$ with $1\leq i<j\leq d$, then ${\mathcal L}^d_{G}$ is optimal for $d\geq 3$.
\end{theorem}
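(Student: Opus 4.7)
The plan is to proceed by a compression argument. Fix $m\in\{1,\dots,|V_G|\}$ and let $A\subseteq V_G$ with $|A|=m$ maximize $|I(A)|$. Partition the edges of $G$ by coordinate direction: an edge lies in $E_k$ iff its endpoints differ only in the $k$-th coordinate, giving $|I(A)|=\sum_{k=1}^d |I^{(k)}(A)|$, where $I^{(k)}(A)$ is the contribution from $E_k$. The overall strategy is to repeatedly improve $A$ using the two-dimensional hypothesis and then invoke induction on $d$ to identify the stable form with $\mathcal{L}^d_G[m]$.

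The key step is \emph{pairwise compression}. Fix $1\leq i<j\leq d$ and, for each assignment $c$ of the remaining $d-2$ coordinates, consider the two-dimensional slice $A^c_{i,j}\subseteq V_{G_i}\times V_{G_j}$ obtained by freezing the other coordinates to $c$. One checks that
\begin{equation*}
|I^{(i)}(A)|+|I^{(j)}(A)|=\sum_c |I_{G_i\square G_j}(A^c_{i,j})|.
\end{equation*}
By the hypothesis that $\mathcal{L}^2_{G_i\square G_j}$ is optimal, replacing each slice $A^c_{i,j}$ by the initial segment of $\mathcal{L}^2_{G_i\square G_j}$ of the same cardinality does not decrease this sum; and since inside a fixed slice the restriction of $\mathcal{L}^d_G$ agrees with $\mathcal{L}^2_{G_i\square G_j}$, a monovariant $\Phi(A)=\sum_{v\in A}r(v)$, where $r(v)$ denotes the rank of $v$ in $\mathcal{L}^d_G$, strictly decreases under every non-trivial compression. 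Sweeping over all pairs and iterating therefore terminates at a set that is simultaneously an initial segment in $\mathcal{L}^2_{G_i\square G_j}$ on every 2-slice; call such a set \emph{2-compressed}.

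It then remains to show by induction on $d$ that a 2-compressed set of size $m$ must equal $\mathcal{L}^d_G[m]$. The base $d=2$ is immediate from the hypothesis. For the inductive step, fix $v_d\in V_{G_d}$ and look at the fibre $A_{v_d}\subseteq V_{G_1}\times\cdots\times V_{G_{d-1}}$. Every 2-slice of $A_{v_d}$ across a pair $(i,j)$ with $i,j<d$ is also a 2-slice of $A$, so $A_{v_d}$ is 2-compressed in $G_1\square\cdots\square G_{d-1}$; by the inductive hypothesis each fibre is therefore an initial segment of $\mathcal{L}^{d-1}_{G_1\square\cdots\square G_{d-1}}$. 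The 2-compressions involving coordinate $d$ then force the fibre sizes $|A_{v_d}|$, as $\eta_d(v_d)$ increases, to be non-increasing and in fact to match the fibre-size profile of $\mathcal{L}^d_G[m]$, identifying $A$ with that initial segment.

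The main obstacle is the rigidity in the last step. Pairwise 2-compression is purely local, and promoting it to the global equality $A=\mathcal{L}^d_G[m]$ requires controlling the interaction between compressions in the $(k,d)$-direction and compressions inside $\{1,\dots,d-1\}$: one has to show that once every fibre $A_{v_d}$ has been brought into $\mathcal{L}^{d-1}$-form, no further pairwise compression across coordinate $d$ can perturb the lower-dimensional structure, and that the resulting non-increasing sequence of fibre sizes is the unique one compatible with $|A|=m$. This compatibility between local slice-optimality and global lex-optimality is precisely the content of Ahlswede--Cai's local-global principle, and is the step the paper's pull-push technique is designed to reorganise and extend.
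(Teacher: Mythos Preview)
Your compression setup is sound, but the inductive claim in your third step---that every 2-compressed set of size $m$ equals $\mathcal{L}^d_G[m]$---is false, not merely hard to prove. In $K_2^3$ the set $A=\{000,001,010,100\}$ is 2-compressed (each two-dimensional slice is either $\{(0,0),(0,1),(1,0)\}$ or $\{(0,0)\}$), yet it is not the lex initial segment $\{000,001,010,011\}$; its fibre profile over the last coordinate is $(3,1)$ rather than $(2,2)$. Hence the inductive hypothesis you invoke on the fibres $A_{v_d}$ is unavailable: those fibres are 2-compressed in $G_1\square\cdots\square G_{d-1}$, but nothing you have established makes them optimal there, and 2-compressedness alone does not determine them.

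What actually closes this gap is the use of the \emph{full} inductive hypothesis via $(d{-}1)$-dimensional compression: one assumes $\mathcal{L}^{d'}$ is optimal for every subproduct with $d'<d$ and compresses $A$ with respect to every proper coordinate subset, obtaining a \emph{strongly compressed} set (Lemmas~\ref{downIsBetter}--\ref{compProcess}). Even then exceptional strongly compressed sets that are not initial segments remain---the $K_2^3$ example above is one---and a separate transformation is required to move such sets to the initial segment without losing edges. In this paper that transformation is the pull-push argument of Theorems~\ref{sliceCompression} and~\ref{last_of_final_proof}, and Theorem~\ref{AC_LGP} is then recovered as Corollary~\ref{lgb_Atomic} of Theorem~\ref{mainResult} by specialising to atomic partitions. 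The substantive work is precisely this residual step, which your last paragraph names but does not carry out. A minor additional gap: your pairwise-compression paragraph only argues that $|I^{(i)}|+|I^{(j)}|$ does not decrease; that the contributions from the remaining directions $k\neq i,j$ do not decrease requires the nestedness of initial segments and is the content of Lemma~\ref{downIsBetter}.
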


\begin{remark}\label{subModFunc}
Actually, Theorem \ref{AC_LGP} was originally stated in a more general form
for sub-modular and super-modular functions over finite sets (see \cite{AC_LGB} and \cite{AC_SMF}), and the
functions $I_G$ and $\Theta_G$ belong to this category. Although our results 
can be also generalized for sub-modular and super-modular functions, 
we will only
be dealing with functions $I_G$ and $\Theta_G$ relevant to the
edge-isoperimetric problems.
\end{remark}

Despite many applications of Theorem \ref{AC_LGP}, there are graphs for whose Cartesian products the lexicographic order is not optimal. 
One of such graphs is a grid, i.e. the Cartesian product of paths, which was later
generalized to the product of arbitrary trees. 

\begin{theorem}[Bollobás-Leader \cite{BL_Grid}]\label{BL_Grid}
If $G$ is a path then $G^d$ has nested solutions for the Induced Edges problem.
\end{theorem}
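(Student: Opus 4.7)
I would approach this via a compression argument in the spirit of classical isoperimetric proofs on grids. For each coordinate $i\in\{1,\dots,d\}$, define the $i$-th compression $C_i$ as follows: partition $V_{G^d}$ into lines parallel to the $i$-th axis, and on each such line $\ell$ replace $A\cap\ell$ with the initial segment of $\ell$ of the same cardinality (using the natural order on the path $G$). The first step is to verify that $|I(C_i(A))|\geq |I(A)|$. Edges inside a line are clearly preserved or increased because an initial segment of a path has the maximum number of induced edges among subsets of its size. Edges between parallel lines $\ell$ and $\ell'$ only depend on $|A\cap \ell\cap \ell'^{\,\mathrm{proj}}|$ in a way that is controlled by an inclusion argument: after compression, the sets on two parallel lines become initial segments, one of which contains the other, which can only increase the number of cross edges. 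Call $A$ \emph{compressed} if $C_i(A)=A$ for every $i$; iterating compressions with a suitable monovariant (for example, the sum of coordinates of elements of $A$) shows that every size $m$ admits an optimal compressed set.

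The second step is to describe compressed sets combinatorially. Identifying $V_{G^d}$ with $\{1,\dots,n\}^d$, a compressed set is precisely a down-set in the coordinatewise order. For down-sets the induced edge count decomposes as a sum over the $d$ coordinate directions of a simple function of the one-dimensional shadows, which gives a clean objective to analyze.

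The main obstacle, and the deepest part of the argument, is promoting the \emph{existence} of an optimal compressed set at each cardinality to the existence of a \emph{nested chain}. Two optimal compressed sets $A$ and $B$ with $|A|<|B|$ need not satisfy $A\subseteq B$. To overcome this I would use an exchange argument based on the submodularity of $I_{G^d}$ (cf.\ Remark \ref{subModFunc}): given optimal compressed $A$ and $B$ with $|A|+1=|B|$ but $A\not\subseteq B$, form $A\cup B$ and $A\cap B$, which by submodularity together induce at least as many edges as $A$ and $B$, then restore the correct cardinalities by carefully chosen local moves that stay within the class of down-sets. To make this work one picks a canonical optimal compressed set at each cardinality, say the one that is lexicographically smallest among optimal compressed sets, and shows that the canonical set of size $m+1$ contains the canonical set of size $m$. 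The delicate point is that the exchange move must preserve both compressedness (down-set property in every direction simultaneously) and optimality; this is where the restriction to products of paths, as opposed to general trees or arbitrary graphs, is essential, because in a path one has the unique initial-segment structure to fall back on.

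Once the canonical chain is established, setting $A_m$ equal to the canonical optimal compressed set of size $m$ produces the nested family $A_1\subset A_2\subset\cdots\subset A_{|V|}$, proving that $G^d$ is isoperimetric for the Induced Edges problem. I expect the compression step to be routine, the down-set structural description to be immediate, and the bulk of the technical work to lie in the exchange/canonical-selection argument that forces nestedness.
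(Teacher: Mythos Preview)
First, a framing point: the paper does \emph{not} prove Theorem~\ref{BL_Grid}. It is quoted as a result of Bollob\'as and Leader, and the paper is explicit that the grid case is the single previously known result that is \emph{not} a special case of its main Theorem~\ref{mainResult} (see the abstract, the introduction, and Section~7). The paper only remarks that Theorems~\ref{BL_Grid} and~\ref{AB_Trees} were originally ``proved combinatorially by induction on $d$.'' So there is no ``paper's own proof'' to compare against here.

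That said, your plan has one solid part and one genuine gap. The compression step (defining $C_i$, showing $|I(C_i(A))|\ge|I(A)|$, iterating to a down-set) is standard and correct; the argument you sketch for cross-edges between parallel lines is exactly the right one. The identification of compressed sets with down-sets in $\{1,\dots,n\}^d$ is immediate.

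The gap is in your nestedness step. You correctly identify that ``an optimal down-set exists at every size'' does not by itself yield a nested chain, but the remedy you propose---take the lexicographically smallest optimal down-set at each size, then use submodularity plus unspecified ``carefully chosen local moves'' to force inclusion---is not a proof plan, it is a restatement of the difficulty. Submodularity gives $|I(A\cup B)|+|I(A\cap B)|\ge|I(A)|+|I(B)|$, so $A\cap B$ and $A\cup B$ are both optimal, but their cardinalities are wrong, and ``restoring the correct cardinalities'' while staying optimal and compressed is precisely the step that needs an idea. Nothing in your sketch says what the local moves are or why they preserve optimality. This is not a minor omission: for general graphs this step can fail, and the path structure has to be used in a concrete way that your outline does not supply.

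The way Bollob\'as--Leader (and subsequent proofs such as Ahlswede--Bezrukov for trees) actually close this gap is different in spirit: rather than selecting a canonical optimum at each size and arguing inclusion after the fact, they exhibit a \emph{single explicit total order} on $\{1,\dots,n\}^d$ and prove, by induction on $d$, that every initial segment of that order is optimal among down-sets. Nestedness is then automatic. If you want to turn your plan into a proof, you should replace the exchange/canonical-selection paragraph with the construction of such an order and an inductive comparison argument showing that any down-set can be transformed into the corresponding initial segment without losing edges.
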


\begin{theorem}[Ahlswede-Bezrukov \cite{AB_Trees}]\label{AB_Trees}
If $G_1,\dots, G_d$ are trees then $G_1\square \cdots \square G_d$ has
nested solutions for the Induced Edges problem.
\end{theorem}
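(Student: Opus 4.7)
The proof proceeds by induction on the dimension $d$. For $d=1$ every tree $T$ admits nested solutions: the induced subgraph on any $m$-subset of $T$ is a forest, so $|I(A)| \leq |A|-1$, with equality iff $A$ is connected. A nested chain of connected subsets of sizes $1,2,\dots,|V_T|$ is produced by starting from a single vertex and repeatedly attaching a neighbor, which is always possible since $T$ is connected.

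For the inductive step, fix optimal orders $\eta_i$ on each $G_i$. Given a set $A \subseteq V_G$ and a coordinate $i$, define the $i$-compression $C_i(A)$ by replacing every one-dimensional fiber of $A$ in direction $i$ with the initial segment of $\eta_i$ of the same cardinality. Partitioning $I(A)$ by the coordinate in which the endpoints of each edge differ, one verifies $|I(C_i(A))| \geq |I(A)|$: edges lying inside a single $i$-fiber only improve, because $\eta_i$ is $1$-dimensionally optimal on $G_i$; while the number of edges between two adjacent $i$-fibers of sizes $a$ and $b$ is bounded above by $\min(a,b)$, a bound attained exactly when both fibers are initial segments of $\eta_i$. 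Call $A$ \emph{stable} if $C_i(A) = A$ for all $i$; iterating the operators $C_i$ converts any optimal set into a stable optimal set of the same size.

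The heart of the proof is to classify the stable optimal sets and to show that they can be arranged into a nested chain $A^*_1 \subset A^*_2 \subset \cdots \subset A^*_{|V|}$. Because products of trees are not covered by Theorem \ref{AC_LGP}, the resulting order on $V_G$ is not lexicographic but rather a more subtle tree-like order. The inductive hypothesis enters at precisely this point: the sequence of fiber sizes of a stable set along a distinguished coordinate (say direction $d$) should itself form an initial segment of an optimal order on the $(d-1)$-dimensional product $G_1 \square \cdots \square G_{d-1}$, which is available by induction.

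The main obstacle will be this final classification step. Compressions in different coordinate directions do not commute in general, so a single optimal set may give rise to many different stable descendants, and one must show that among the stable optimal sets of each size there is a distinguished representative that can be selected coherently as $m$ varies. The acyclicity of each $G_i$ is essential here, as it rules out precisely the combinatorial configurations that cause the lexicographic order to fail on grids and forces two stable optimal sets of consecutive sizes to differ by a single vertex whose position is controlled by the inductive hypothesis on the lower-dimensional product.
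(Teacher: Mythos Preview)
This theorem is cited in the paper as prior work of Ahlswede and Bezrukov; the present paper does \emph{not} give its own proof of it. Indeed, the authors stress that products of trees (and in particular grids, Theorem~\ref{BL_Grid}) are precisely the case their main Theorem~\ref{mainResult} does \emph{not} cover, and in Section~7 they leave the extension to such orders as an open problem. So there is no in-paper proof to compare against; the only information the paper records is that the original argument in \cite{AB_Trees} proceeds ``combinatorially by induction on $d$'' via ``a new approach'' simpler than that of \cite{BL_Grid}.

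That said, your proposal is not a proof but a proof \emph{outline}, and the gap is exactly where you yourself place it. The base case and the compression machinery are correct and standard (and essentially Lemma~\ref{downIsBetter} here). But everything after ``The heart of the proof is\ldots'' is programmatic: you assert that stable optimal sets can be arranged into a nested chain, that the profile of fiber sizes should itself be an initial segment for the $(d-1)$-dimensional product, and that acyclicity ``rules out precisely the combinatorial configurations'' that would obstruct this---yet none of these claims is argued. In particular, the sentence ``forces two stable optimal sets of consecutive sizes to differ by a single vertex whose position is controlled by the inductive hypothesis'' is the entire content of the theorem, and you have not supplied a mechanism for it. The actual Ahlswede--Bezrukov argument requires constructing a specific (non-lexicographic) total order on $V_G$ and proving directly that each initial segment is optimal; identifying that order and verifying its optimality is the work, and your proposal stops just before it begins.
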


Theorems \ref{BL_Grid} and \ref{AB_Trees} are proved combinatorially 
by induction on $d$. 
In \cite{BL_Grid} the authors also solved the Cut Edges problem, 
but they used calculus for this. The Cut Edges problem does not have 
nested solutions for the grid. Ahlswede and Bezrukov generalized 
Theorem \ref{BL_Grid} and gave a simpler proof based on a new approach. Later, 
some other graphs were found for which the optimal order is different
from the Lexicographic one.

\begin{theorem}[Bezrukov-Das-Elsässer \cite{BDE_Petersen}]\label{BDE_Petersen}
If $G$ is the Petersen graph then $G^d$ has nested solutions.
\end{theorem}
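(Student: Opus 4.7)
The plan is to establish a candidate order $\eta$ on $V_{G^d}$ and show inductively on $d$ that its initial segments realize $I_{G^d}(m)$ for every $m$. Since the Petersen graph is $3$-regular, by the lemma the Induced Edges and Cut Edges problems are equivalent, so I may work throughout with $|I(A)|$ and maximization.

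First I would pin down the base graph $d=1$. Using that Petersen has girth $5$, I would compute $I_G(m)$ for $m=1,\dots,10$ by a short case analysis: no triangle or $4$-cycle caps $I_G(m)$ for $m \le 4$, while the outer pentagon shows $I_G(5)=5$; the complementary inner pentagon plus spokes handles the remaining sizes. The vertex- and edge-transitivity of $G$ means many optimal orders exist, and I would fix one, call it $\eta_1$, whose initial segments are always an optimal $5$-cycle, then its neighbors, etc.

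Because the paragraph preceding Theorem \ref{BDE_Petersen} warns that for such graphs the lexicographic order need not be optimal, I cannot simply verify the hypothesis of Theorem \ref{AC_LGP} on $G\square G$ and invoke it. Instead I would follow the inductive combinatorial strategy used in the proofs of Theorems \ref{BL_Grid} and \ref{AB_Trees}: write $V_{G^d}=V_G\times V_{G^{d-1}}$ as a union of $10$ slices, note that by induction $G^{d-1}$ is isoperimetric with some order $\eta_{d-1}$, and define $\eta_d$ by filling slices in the $\eta_1$-order while filling each slice according to $\eta_{d-1}$. The proof that $\eta_d$ is optimal then splits into two ingredients. The first is a \emph{within-slice compression}: for any $A\subseteq V_{G^d}$ with prescribed slice sizes $a_v=|A\cap(\{v\}\times V_{G^{d-1}})|$, replacing each slice by the $\eta_{d-1}$-initial segment of the same size does not decrease $|I(A)|$, which is immediate from induction because the cross-slice edges depend only on the sizes $a_v$. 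The second is a \emph{between-slice rearrangement}: having compressed each slice, the total edge count becomes $\sum_v I_{G^{d-1}}(a_v)+\sum_{\{v,w\}\in E_G}\min(a_v,a_w)$ (using that initial segments nest), and I must show this expression is maximized by a size profile that is itself an $\eta_1$-``initial segment'' of slice-sizes in $G$.

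The hard part will be the between-slice rearrangement. For paths this reduces to a unimodality argument, and for trees to pushing mass toward the root; for Petersen the $3$-regularity and lack of leaves eliminate both tools. The key lemma I would aim to prove is a local exchange: if two adjacent vertices $v,w\in V_G$ have $a_v<a_w$ and $v$ precedes $w$ in $\eta_1$ while some $v',w'$ violate this, then shifting one unit of mass along an appropriate edge strictly increases the objective, using concavity-like inequalities for $I_{G^{d-1}}$. Establishing the right concavity statement for $I_{G^{d-1}}$ (an inequality of the form $I(a+1)+I(a-1)\le 2I(a)$ along the order, or a weaker slice-comparability statement) is the main technical obstacle, and would likely require exhibiting a strongly monotone structure of optimal solutions coming from the $5$-cycle decomposition of the Petersen graph.
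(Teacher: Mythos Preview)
Your outline has a concrete gap at the between-slice step. The concavity inequality you propose, $I_{G^{d-1}}(a+1)+I_{G^{d-1}}(a-1)\le 2I_{G^{d-1}}(a)$, is equivalent to $\delta(a+1)\le\delta(a)$, i.e.\ to $\delta$ being non-increasing. Already for $d-1=1$ this fails: the paper records $\delta_{\text{Petersen}}=(0,1,1,1,2,1,2,2,2,3)$, so $\delta(5)=2>1=\delta(4)$. Hence no mass-shifting argument based on that concavity can work, and the ``weaker slice-comparability statement'' you allude to is exactly where the original proof in \cite{BDE_Petersen} explodes into the many cases and sub-cases the paper complains about. Your proposal does not supply a replacement for that missing lemma, so as written it does not close.

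The paper's route is entirely different and sidesteps the between-slice optimization. It introduces the \emph{standard block-lexicographic order} $\mathcal{SBL}$ built from the monotonic partition $\mathfrak{M}_G$ (for Petersen, six blocks of sizes $2,1,2,2,1,2$, each inducing a clique), and proves a local-global principle for such orders: if $\mathcal{SBL}^2$ is optimal on every two-factor product, then $\mathcal{SBL}^d$ is optimal for all $d\ge3$ (Theorem~\ref{mainResult}, specialized in Corollary~\ref{mainResultCor}). The Petersen result then drops out as Corollary~\ref{petersenCor}: one checks the single case $G\square G$ (done in \cite{BDE_Petersen} and Chapter~9 of \cite{H_Book}) and the higher dimensions follow automatically via the pull-push machinery of Section~5. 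So instead of optimizing over size profiles $(a_v)$ on the Petersen graph, the paper reduces everything to a two-dimensional verification plus a structural induction that is graph-independent.
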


\begin{theorem}[Bezrukov-Das-Elsässer \cite{BDE_Petersen}]\label{BDE_PetersenCubes}
If $G_1$ is the Petersen graph and $G_2=K_2$ then
$G_1^{d_1} \square G_2^{d_2}$ has nested solutions.
\end{theorem}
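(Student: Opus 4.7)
The plan is to apply the Ahlswede--Cai local-global principle (Theorem \ref{AC_LGP}), viewing $G_1^{d_1} \square G_2^{d_2}$ as the Cartesian product of $d_1+d_2$ isoperimetric factors: $d_1$ copies of the Petersen graph $G_1$ and $d_2$ copies of $K_2$. The factors are isoperimetric (Petersen by the $d=1$ case of Theorem \ref{BDE_Petersen}, and $K_2$ trivially). Theorem \ref{AC_LGP} then reduces the problem, for $d_1+d_2\geq 3$, to checking that the 2-dimensional lexicographic order $\mathcal{L}^2$ is optimal on each pairwise product of factors. There are three such products: Petersen $\square$ Petersen, covered by the $d=2$ case of Theorem \ref{BDE_Petersen}; $K_2\square K_2$, covered by the $d=2$ case of Theorem \ref{cube}; and the mixed product Petersen $\square K_2$, which is the only genuinely new case.

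The core task is therefore to prove that $\mathcal{L}^2$ is optimal on the 20-vertex graph $G_1\square G_2=$ Petersen $\square\, K_2$, using the optimal order $\eta_1$ on Petersen (from \cite{BDE_Petersen}) and the unique order on $K_2$. I would decompose any candidate set $A$ as $A=A_0\cup A_1$ where $A_j$ is the trace of $A$ on the Petersen layer indexed by $j\in V(K_2)$, and write
\begin{align*}
|I(A)| = |I_{G_1}(A_0)| + |I_{G_1}(A_1)| + |A_0\cap A_1|,
\end{align*}
identifying $A_0,A_1$ with subsets of $V(G_1)$. By a standard compression/exchange argument in each layer (using the nested optimal sets for Petersen), one may assume $A_0$ and $A_1$ are initial segments of $\eta_1$. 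It then suffices to show that among the pairs of initial segments with $|A_0|+|A_1|=m$, the maximum of $|I(A)|$ is attained when $|A_0|\geq |A_1|$ and $A_1\subseteq A_0$, i.e.\ at the lex initial segment of size $m$.

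The main obstacle I expect is handling the transitions where $|A_0|$ crosses a ``boundary'' between two consecutive nested Petersen initial segments while $A_1$ grows correspondingly: moving one vertex from layer $0$ to layer $1$ changes $|I_{G_1}(A_0)|+|I_{G_1}(A_1)|$ in a way that depends on the specific degrees of that vertex inside each initial segment, and this must be balanced against the change in the matching contribution $|A_0\cap A_1|$. The fact that the Petersen graph is small (only $10$ vertices, with an explicit optimal order whose induced degree sequence is known) allows this to be resolved by a short case analysis over the possible sizes $|A_0|,|A_1|$, essentially verifying a finite list of inequalities. Once this is done, Theorem \ref{AC_LGP} yields optimality of $\mathcal{L}^d_G$ for $d=d_1+d_2\geq 3$, and the initial segments of $\mathcal{L}^d_G$ form the desired chain of nested optimal solutions. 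The degenerate cases are immediate: $d_2=0$ is Theorem \ref{BDE_Petersen}, $d_1=0$ is Theorem \ref{cube}, and $d_1=d_2=1$ is the Petersen $\square\,K_2$ verification itself.
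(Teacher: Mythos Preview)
Your plan has a genuine gap at its very first step. You propose to invoke the Ahlswede--Cai principle (Theorem~\ref{AC_LGP}), whose hypothesis is that the \emph{lexicographic} order $\mathcal{L}^2$ is optimal on every two-factor subproduct. You cite the $d=2$ case of Theorem~\ref{BDE_Petersen} for $G_1\square G_1$, but that theorem only asserts that the Petersen square has nested solutions; it does \emph{not} say the optimal order is lexicographic, and in fact it is not. The paper states this explicitly: the Petersen graph is listed among the graphs ``for which the optimal order is different from the Lexicographic one,'' and in Section~6 the Petersen results are presented precisely as examples that ``do not follow from the theorem of Ahlswede--Cai.'' Concretely, the $\delta$-sequence $\delta_{\text{Petersen}}=(0,1,1,1,2,1,2,2,2,3)$ is not monotone, and the optimal order on $G_1^2$ is the \emph{standard block-lexicographic} order built from the monotonic partition, not $\mathcal{L}^2$. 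So the hypothesis of Theorem~\ref{AC_LGP} fails for the pair $(G_1,G_1)$, and your reduction never gets off the ground.

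The paper's route is to use its own, more general local-global principle. Via Corollary~\ref{mainResultCor} (a specialization of Theorem~\ref{mainResult} to regular factors with their standard monotonic partitions), it suffices to check that the standard block-lexicographic order $\mathcal{SBL}^2$ is optimal on $G_1\square G_1$, $G_1\square G_2$, and $G_2\square G_2$; these two-dimensional verifications are taken from \cite{BDE_Petersen} and \cite{H_Book}. If you want to salvage your outline, you would need to replace $\mathcal{L}^2$ everywhere by $\mathcal{SBL}^2$ and replace Theorem~\ref{AC_LGP} by Theorem~\ref{mainResult}; the layer decomposition you sketch for $G_1\square K_2$ is then aimed at the wrong target order and would have to be redone accordingly.
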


\begin{theorem}[Carlson \cite{C_Tori}]\label{C_Cycle}
If $G$ is the cycle on $5$ vertices then $G^d$ has nested solutions.
\end{theorem}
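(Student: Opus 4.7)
The plan is to use the paper's generalized local-global principle (the pull-push method) to reduce the problem on $C_5^d$ to its two-dimensional version on $C_5^2$, and then handle the base case $d=2$ by a direct combinatorial analysis.

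First, I would fix an optimal order on $C_5$ itself. The optimal $m$-subsets of $C_5$ are arcs of length $m$, so the natural choice labels the vertices $1,2,3,4,5$ consecutively around the cycle; the initial segments are then nested optimal solutions, witnessing that $C_5$ is isoperimetric in the sense of the excerpt.

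Next, I would check whether the lexicographic order $\mathcal{L}^2_{C_5^2}$ is optimal. If yes, then Theorem \ref{AC_LGP} immediately yields nested solutions for all $d\geq 2$ and we are done. Otherwise, which I expect to be the case because the odd cycle should make roughly "squarish" sets on the torus slightly better than lex-initial rectangles for some values of $m$, I would construct an alternative total order $\mu$ on $V_{C_5^2}$ whose initial segments are the optimal sets. A plausible description builds these segments by growing $\ell_1$-balls on the torus, broken by a fixed tie-breaking rule; verification proceeds by a compression argument that replaces every coordinate-slice of an arbitrary $A\subseteq V_{C_5^2}$ by an arc on $C_5$ without decreasing $|I(A)|$, followed by a finite case check over $m\in\{1,\dots,25\}$ to pin down the remaining, doubly-compressed, configurations.

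With the $d=2$ case in hand, I would invoke the generalization of Theorem \ref{AC_LGP} proved in this paper to pass from optimality in dimension $2$, for a possibly non-lexicographic order $\mu$, to optimality on $C_5^d$ for $d\geq 3$, thereby obtaining the required chain of nested solutions. The main obstacle is the two-dimensional base case: the torus wrap-around destroys the monotonicity that drives the cleaner proofs for paths and trees (Theorems \ref{BL_Grid} and \ref{AB_Trees}), and the parity of $5$ blocks easy symmetry reductions. Identifying the correct family of optimal sets in $C_5^2$ and proving the compression step without introducing slack in $|I(A)|$ is the technical heart of the argument; once $\mu$ is verified, the step from $d=2$ to $d$ should follow mechanically from the paper's principle.
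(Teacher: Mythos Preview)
Your high-level strategy---settle $d=2$ directly, then lift to all $d$ via the paper's local-global principle---is exactly the paper's route (see Corollary~\ref{tori1}, which specializes to $C_5^d$ when $d_2=d_3=0$). But two things are off.

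First, the paper's Theorem~\ref{mainResult} is \emph{not} a local-global principle for an arbitrary optimal two-dimensional order $\mu$. It applies only to the block lexicographic order $\mathcal{BL}^2$ built from isoperimetric partitions that form a non-decreasing, regular domination collection. So you cannot just find some $\mu$ that works on $C_5^2$ and invoke the theorem; you must show that the \emph{specific} order $\mathcal{BL}^2_{C_5\square C_5}$ (here the standard block lexicographic order $\mathcal{SBL}^2$, based on the monotonic partition $\delta_{C_5}=(\underline{0,1},\underline{1},\underline{1,2})$) is optimal. The paper checks the structural hypotheses via Corollary~\ref{standardPartIsRegular} (regularity of $C_5$ forces $\mathfrak{M}_{C_5}$ to be regular) and Corollary~\ref{mainResultCor}, and then simply \emph{cites} Carlson~\cite{C_Tori} for the two-dimensional optimality of $\mathcal{SBL}^2$; it does not reprove that base case.

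Second, your proposed $\mu$ built from $\ell_1$-balls on the torus is not the right object. Already at $m=4$ the optimal set in $C_5^2$ is a $2\times 2$ square (four induced edges), which is precisely the first block of $\mathcal{SBL}^2$, whereas any four-vertex piece of an $\ell_1$-ball is an L- or T-shape with only three edges. The correct two-dimensional order fills the $2{\times}2,\ 2{\times}1,\ 2{\times}2,\ 1{\times}2,\dots$ blocks of $\mathfrak{M}_{C_5}\times\mathfrak{M}_{C_5}$ in lexicographic order of their starts, with the appropriate domination order inside each block; that is what you would need to verify (or cite) before the induction step applies.
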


The proof technique used in Theorems \ref{BDE_Petersen},
\ref{BDE_PetersenCubes} and \ref{C_Cycle} is also based on induction on
$d$, where the base case $d=2$ was considered specially. This approach can
also be considered as a local-global principle. The proof of the induction steps 
involved a large number of cases, subcases and subsubcases, along with a 
decent amount of computations, and is based on specific properties of the considered graphs.

Our result is the most general local-global principle. It is valid for
a variety of total orders, out of which the lexicographic order and the 
other orders appearing in theorems
\ref{BDE_Petersen}, \ref{BDE_PetersenCubes} and \ref{C_Cycle} are special
cases.  
The proof of our result is purely geometric and involves just a few cases or
computations. We use a new technique that we call the pull-push method. 
This technique is somewhat different from the one used to prove the earlier
theorems and does not depend on the structure of the involved graphs.
Harper in \cite{H_Book} asked if Theorem \ref{AC_LGP} can be extended to prove
Theorems \ref{BDE_Petersen} and \ref{BL_Grid}.
Our main result answers this affirmatively for Theorem \ref{BDE_Petersen}.
In the last section of the paper, 
we lay out a strategy on how to further generalize the main result to handle Theorem \ref{BL_Grid} as well.

The edge-isoperimetric problems have a lot of applications, some of which 
can be found in \cite{H_Book} and \cite{B2}.
The applications include the wirelength problem, the bisection width and the
edge congestion problem of graph embedding, modeling the brain, the 
cutwidth problem and graph partitioning. The graphs in Theorem 
\ref{BDE_Petersen} are called {\it folded Petersen networks} and have been 
studied in \cite{pet1,pet2,pet3,pet4, pet5} as a communication-efficient 
interconnection network topology for multiprocessors.
The graphs in Theorem \ref{BDE_PetersenCubes}
are called {\it folded Petersen cubes} and have been studied in
\cite{pet5, pet6}.

The paper has 7 sections. In the next section we introduce the necessary 
definitions to formulate the main result. In section 3 we present the 
geometric structure of the problem. In section 4 we go over some well-known results on compression which we use in the paper. 
The main result is proved in section 5. Section 6 is devoted to some corollaries of our main 
result and can be considered as a short survey of results in the area.
Concluding remarks and possible directions for generalizing our results
are put in section 7.

\section{\texorpdfstring{$\delta$} --sequences, partitions and statement of the main result}
Let ${\mathcal O}^d$ be a total order on $\mathbb{R}^d$ and let $G_1,\dots, G_d$ be isoperimetric graphs with optimal orders 
$\eta_1,\dots, \eta_d$. Consider $G=G_1\square \cdots \square G_d$ and
define the total order ${\mathcal O}^d_G$ on $G$ so that $v <_{{\mathcal O}^d_G} u$
for $v=(v_1,\dots, v_d)\in V_G$ and $u=(u_1,\dots, u_d)\in V_G$,
iff $(\eta_1(v_1),\dots, \eta_d(v_d)) <_{{\mathcal O}^d} 
(\eta_1(u_1),\dots, \eta_d(u_d))$.
We call ${\mathcal O}^d_G$ the \textit{induced by ${\mathcal O}^d$ order} on $G$.

Denote by $\mathfrak{S}_d$ the symmetric group of degree $d$, 
i.e., the set of all permutations on $\{1,\dots, d\}$.
For $\pi\in\mathfrak{S}_d$ we define the \textit{domination order} 
${\mathcal D}^{\pi, d}$ of dimension $d$ so that $x<_{{\mathcal D}^{\pi,d}} y$
for $x=(x_1,\dots, x_d)\in \mathbb{R}^d$ and
$y=(y_1,\dots, y_d)\in \mathbb{R}^d$ iff
$(x_{\pi(1)},\dots,x_{\pi(d)})<_{{\mathcal L}^d} (y_{\pi(1)},\dots,y_{\pi(d)})$.
Respectively, for $G=G_1\square \cdots \square G_d$ we obtain the domination
order ${\mathcal D}^{\pi, d}_G$ on $G$ induced by ${\mathcal D}^{\pi, d}$.
For example, ${\mathcal D}^{\text{id},d} = {\mathcal L}^d$,
where $\text{id}$ is the identity permutation on $\{1,\dots, d\}$.
If $\pi(i) = (d-i+1)$ then ${\mathcal D}^{\pi,d}$ is the 
\textit{colexicographic order}.
For $d=2$ we have just two domination orders, the
lexicographic and colexicographic ones. A geometric interpretation of
these orders is given in Figure \ref{dom2D}.
For $d=3$ there are $6$ dominations orders including
the lexicographic and colexicographic ones.
A geometric interpretation of these orders is given in Figure \ref{dom3D}.

\begin{figure}
	\centering
	\includegraphics[width=\textwidth]{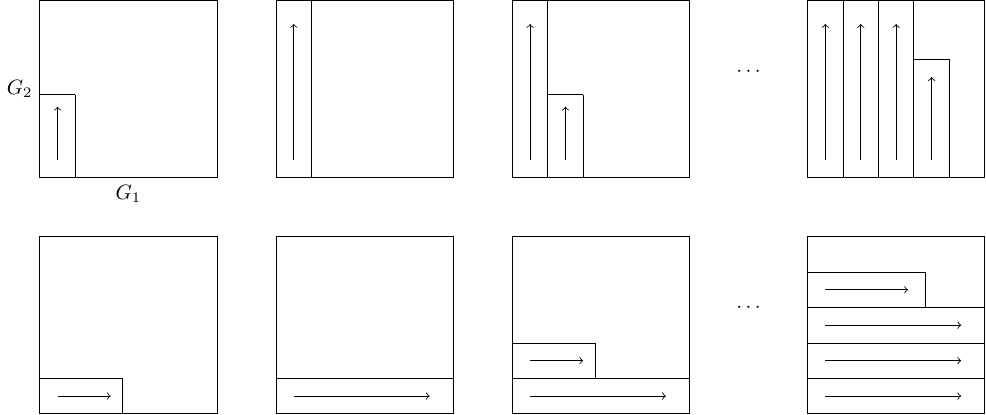}
	\caption{Geometric interpretation of lexicographic (top) and colexicographic (bottom) orders in two dimensions}\label{dom2D}
\end{figure}

\begin{figure}
	\centering
	\includegraphics[width=\textwidth]{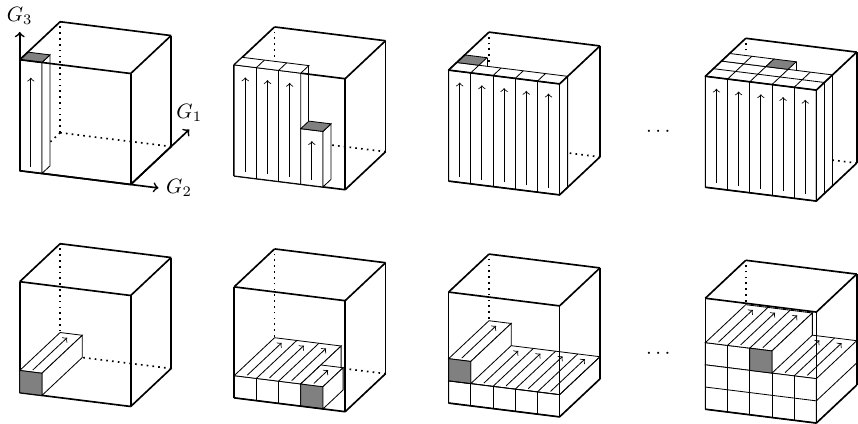}
	\caption{Geometric interpretation of lexicographic (top) and colexicographic (bottom) orders in three dimensions}\label{dom3D}
\end{figure}


\begin{corollary} (of Theorem \ref{AC_LGP})\label{AC_LGP_Dom}
Let $G_1,\dots, G_d$ be isoperimetric graphs, $G = G_1\square \cdots \square
G_d$, and $\pi \in \mathfrak{S}_d$.
If for all $i,j \in \{1,\dots ,d\}$ with $i< j$ the order
${\mathcal L}^2_{G_{\pi(i)}\square G_{\pi(j)}}$ 
is optimal, then ${\mathcal D}^{\pi,d}_G$ is optimal for $d\geq 3$.
\end{corollary}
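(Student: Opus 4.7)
The plan is to reduce the statement to Theorem \ref{AC_LGP} by reindexing the factors of the Cartesian product according to $\pi$. The Cartesian product of graphs is commutative and associative up to a natural isomorphism, so setting $H_i = G_{\pi(i)}$ for $i = 1, \ldots, d$ and $H = H_1 \square \cdots \square H_d$, there is a canonical graph isomorphism $\phi : H \to G$ given by
\[
\phi(v_1, \ldots, v_d) = (v_{\pi^{-1}(1)}, \ldots, v_{\pi^{-1}(d)}),
\]
i.e.\ the $\pi(i)$-th coordinate of $\phi(v)$ is $v_i$. Since $\phi$ is an isomorphism, a set $S \subseteq V_H$ satisfies $|I_H(S)| = I_H(|S|)$ if and only if $\phi(S) \subseteq V_G$ is optimal in $G$, and initial segments of any order on $H$ correspond under $\phi$ to initial segments of the transported order on $G$.

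Next I would verify that the transported order is precisely the domination order. By construction, for $v, u \in V_H$,
\[
v <_{{\mathcal L}^d_H} u \iff (\eta_{\pi(1)}(v_1), \ldots, \eta_{\pi(d)}(v_d)) <_{{\mathcal L}^d} (\eta_{\pi(1)}(u_1), \ldots, \eta_{\pi(d)}(u_d)).
\]
Writing $v' = \phi(v)$ and $u' = \phi(u)$, so that $v_i = v'_{\pi(i)}$ and $u_i = u'_{\pi(i)}$, this becomes
\[
(\eta_{\pi(1)}(v'_{\pi(1)}), \ldots, \eta_{\pi(d)}(v'_{\pi(d)})) <_{{\mathcal L}^d} (\eta_{\pi(1)}(u'_{\pi(1)}), \ldots, \eta_{\pi(d)}(u'_{\pi(d)})),
\]
which is exactly the definition of $v' <_{{\mathcal D}^{\pi,d}_G} u'$. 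Hence $\phi$ carries ${\mathcal L}^d_H$ onto ${\mathcal D}^{\pi,d}_G$, and optimality of the former in $H$ is equivalent to optimality of the latter in $G$.

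Finally, I would apply Theorem \ref{AC_LGP} to $H = H_1 \square \cdots \square H_d$. Its hypothesis requires that ${\mathcal L}^2_{H_i \square H_j}$ be optimal for every $i < j$, but $H_i \square H_j = G_{\pi(i)} \square G_{\pi(j)}$ and the optimality of ${\mathcal L}^2_{G_{\pi(i)} \square G_{\pi(j)}}$ is granted by hypothesis. For $d \geq 3$ the theorem yields the optimality of ${\mathcal L}^d_H$, and pulling back through $\phi$ gives the optimality of ${\mathcal D}^{\pi,d}_G$ on $G$.

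There is essentially no obstacle here beyond the bookkeeping in the second paragraph: once the index permutation is performed on the factors, the statement is literally Theorem \ref{AC_LGP}. The only subtlety worth emphasizing is that the optimal orders $\eta_i$ on the factors are intrinsic to the graphs $G_i$, so permuting the factors does not change which orders on the pairs $G_{\pi(i)} \square G_{\pi(j)}$ the hypothesis refers to.
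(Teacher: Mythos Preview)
Your proof is correct and follows essentially the same approach as the paper: define the reindexed product $H=G_{\pi(1)}\square\cdots\square G_{\pi(d)}$, use the coordinate-permutation isomorphism between $G$ and $H$ to identify ${\mathcal D}^{\pi,d}_G$ with ${\mathcal L}^d_H$, and then invoke Theorem~\ref{AC_LGP} on $H$. The only cosmetic difference is that the paper writes the isomorphism in the direction $\psi:G\to H$, $\psi(v_1,\dots,v_d)=(v_{\pi(1)},\dots,v_{\pi(d)})$, whereas you use its inverse $\phi:H\to G$; your more explicit verification that the orders correspond is a welcome addition.
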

\begin{proof}
Denote $H=G_{\pi(1)}\square\cdots\square G_{\pi(d)}$ and define
$\psi: G\rightarrow H$ such that
\begin{align*}
    \psi((v_1,\dots, v_d)) = (v_{\pi(1)},\dots,v_{\pi(d)}).
\end{align*}
It is easily seen that $\psi$ is a graph isomorphism.
For $x,y\in V_G$ one has $x<_{{\mathcal D}^{\pi,d}_G} y$ iff
$\psi(x)<_{{\mathcal L}^d_H} \psi(y)$.
Since ${\mathcal L}^d_H$ is optimal for $H$ we conclude
${\mathcal D}^{\pi,d}_G$ is optimal for $G$.
\end{proof}

For a graph $G=(V,E)$ and integer $m\in \{1,\dots, |V|\}$ denote
\begin{align*}
    \delta_G(m) = I_G(m) - I_G(m-1)
\end{align*}
with $\delta_G(1)=0$. If $G$ is isoperimetric with optimal order ${\mathcal O}_G$ 
and $v\in V$ denote
\begin{align*}
    \Delta_G(v) = \delta_G( {\mathcal O}_G(v))
\end{align*}

\begin{lemma}[Bezrukov \cite{B_equivalence}]\label{deltaSeqBound}
If $G=(V,E)$ is isoperimetric then $\delta(i+1) - \delta(i) \leq 1$ for
$i=1,\dots,|V|-1$. 
\end{lemma}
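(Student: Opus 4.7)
The plan is to express $\delta(i)$ as the number of edges gained when passing through the nested-solutions chain, and then use a simple swap argument.

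Fix an optimal order $\eta_G$ with corresponding nested solutions $\emptyset = A_0 \subset A_1 \subset \cdots \subset A_{|V|}$, and let $u_i$ denote the unique vertex in $A_i \setminus A_{i-1}$. Since every edge of $I(A_i) \setminus I(A_{i-1})$ has $u_i$ as one endpoint and a neighbor of $u_i$ in $A_{i-1}$ as the other, I would first observe
\begin{equation*}
\delta(i) \;=\; I(i) - I(i-1) \;=\; |I(A_i)| - |I(A_{i-1})| \;=\; |N(u_i)\cap A_{i-1}|.
\end{equation*}
Denote this quantity by $d_i$; so the claim reduces to $d_{i+1} \leq d_i + 1$.

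Next, consider the set $A' = A_{i-1}\cup\{u_{i+1}\}$, which has size $i$. By optimality of $A_i$ for this size,
\begin{equation*}
|I(A')| \;\leq\; |I(A_i)|,
\end{equation*}
and since $|I(A')| - |I(A_{i-1})| = |N(u_{i+1})\cap A_{i-1}|$ while $|I(A_i)| - |I(A_{i-1})| = d_i$, this yields $|N(u_{i+1})\cap A_{i-1}| \leq d_i$. Finally, decomposing the neighborhood of $u_{i+1}$ in $A_i$ as
\begin{equation*}
d_{i+1} \;=\; |N(u_{i+1})\cap A_i| \;=\; |N(u_{i+1})\cap A_{i-1}| + [\{u_i,u_{i+1}\}\in E],
\end{equation*}
where the bracket is $1$ or $0$, I obtain $d_{i+1} \leq d_i + 1$ as desired.

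There is no real obstacle here; the argument is a one-step exchange, and the only point that has to be handled cleanly is the equality $\delta(i) = |N(u_i)\cap A_{i-1}|$, which uses the fact that each new induced edge has exactly one endpoint in the newly added singleton. The same bookkeeping also covers the boundary case $i=1$, where $\delta(1)=0$ since $A_0=\emptyset$ and the argument gives $\delta(2)\leq 1$, as expected.
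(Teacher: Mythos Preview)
Your argument is correct; the swap $A' = A_{i-1}\cup\{u_{i+1}\}$ together with the identity $\delta(i) = |N(u_i)\cap A_{i-1}|$ is exactly the standard proof of this fact. The paper itself does not supply a proof for this lemma---it is quoted from \cite{B_equivalence}---so there is nothing to compare against, but what you have written is complete and would serve as a self-contained proof.
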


Let $G=(V,E)$ be isoperimetric with optimal order ${\mathcal O}_G$.
For integers $a,b\in \{1,\dots, |V|\}$ with $a<b$
denote by $\delta_{G}[a,b] = (\delta(a),\dots, \delta(b))$  , a \textit{monotonic segment}, where
\begin{enumerate}
    \item For all $i\in \{a,\dots, b-1\}$ we have $\delta(i+1)-\delta(i)=1$.
    \item If $a>1$ then $\delta(a)-\delta(a-1) < 1$.
    \item If $b< |V|$ then $\delta(b+1)-\delta(b) < 1$.
\end{enumerate}
In other words, a monotonic segment is a longest increasing sequence of 
$\delta$-values.
We say that ${\mathcal O}_G[a,b] \subseteq V$ is a \textit{monotonic set} from $a$ to $b$.
For two monotonic sets ${\mathcal O}_G[a_1,b_1]$ and ${\mathcal O}_G[a_2,b_2]$,
we write ${\mathcal O}_G[a_1,b_1] <_{{\mathcal O}_G} {\mathcal O}_G[a_2,b_2]$ 
iff $b_1<a_2$.
It is easily seen that $V$ is uniquely partitioned into monotonic sets,
hence, $\delta_G$ - into monotonic segments.
We call such a partition the \textit{standard monotonic partition} of $G$, and we denote it by $\mathfrak{M}_G=\{{\mathcal O}_G[a_1,b_1],\dots,{\mathcal O}_G[a_k,b_k]\}$. 
Here are a few examples, where monotonic segments are underlined:
\begin{align*}
    \delta_{K_n} &= (\underline{0,1,2,3,\dots,n-1}),\\
    \delta_{P_n} &= (\underline{0,1},\underline{1},\underline{1},\dots, 
                    \underline{1}),\\
    \delta_{\text{Petersen}} &= (\underline{0,1},\underline{1},\underline{1,2},
                    \underline{1,2},\underline{2},\underline{2,3}).
\end{align*}
Thus, $|\mathfrak{M}_{K_n}|=1$, $|\mathfrak{M}_{P_n}|=n-1$, and 
$|\mathfrak{M}_{\text{Petersen}}|= 6$.
It turns out that $\mathfrak{M}_G$ has interesting properties.

\begin{theorem}[Bezrukov-Bulatovic-Kuzmanovski \cite{BBK}]\label{standardPartition}
Let $G$ be isoperimetric and consider its standard monotonic partition
$\mathfrak{M}_G =
\{{\mathcal O}_G[a_1,b_1] <_{{\mathcal O}_G} \cdots <_{{\mathcal O}_G} {\mathcal
O}_G[a_k,b_k]\}$. For
$i\in \{1,\dots, k\}$ one has
\begin{enumerate}
    \item The graph $({\mathcal O}_G[a_i,b_i], I_G({\mathcal O}_G[a_i,b_i]))$ is a clique,
    and hence it is isoperimetric with induced order ${\mathcal O}_i$,
    such that $u <_{{\mathcal O}_i} v$ for $u,v\in {\mathcal O}_G[a_i,b_i]$ 
    iff $u < _{{\mathcal O}_G} v$.
    \item For all $v\in {\mathcal O}_G[a_i,b_i]$ we have 
    $|I({\mathcal O}_G[a_1,b_{i-1}], \{v\})| = \delta (a_i)$.
\end{enumerate}
\end{theorem}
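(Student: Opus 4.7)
The plan is to prove Part 2 first; Part 1 will drop out of the same computation. Fix a segment $\mathcal{O}_G[a_i,b_i]$ and a vertex $v$ in it with $\mathcal{O}_G(v)=j$, and decompose the back-edges of $v$ (the edges from $v$ into $\mathcal{O}_G[j-1]$) according to whether the other endpoint lies in an earlier segment or in the current segment: let $x = |I(\mathcal{O}_G[a_1,b_{i-1}],\{v\})|$ and let $f$ be the number of edges from $v$ to $\{v_{a_i},\dots,v_{j-1}\}$. By the definition of $\delta$, $\delta(j) = x + f$.

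Two bounds will pin down $x$. The monotonic property of the segment gives $\delta(j) = \delta(a_i) + (j - a_i)$, and combining this with the trivial inequality $f \leq j - a_i$ yields $x \geq \delta(a_i)$. For the reverse inequality I would invoke optimality of $\mathcal{O}_G$ via a one-vertex swap: the set $A' = \mathcal{O}_G[a_i-1] \cup \{v\}$ has exactly $a_i$ elements, so by optimality $|I(A')| \leq |I(\mathcal{O}_G[a_i])|$; since the monotonic partition is contiguous, $\mathcal{O}_G[a_i-1]$ is precisely $\mathcal{O}_G[a_1,b_{i-1}]$, and expanding both sides relative to $|I(\mathcal{O}_G[a_i-1])|$ reduces this inequality to $x \leq \delta(a_i)$. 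The two inequalities force $x = \delta(a_i)$ (this is Part 2) and simultaneously $f = j - a_i$, that is, $v$ is adjacent to every preceding vertex of its own segment.

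Running the last conclusion for all $v$ in the segment shows that the segment induces a complete subgraph, establishing the clique claim of Part 1. The optimality of the restricted order $\mathcal{O}_i$ is then automatic, since in a clique every $m$-subset spans $\binom{m}{2}$ edges and hence every total order is an optimal order. I do not foresee a real obstacle here: once the decomposition $\delta(j)=x+f$ is in place, the two short inequalities do all the work. The only bookkeeping point is to confirm that $\mathcal{O}_G[a_i-1]$ coincides with $\mathcal{O}_G[a_1,b_{i-1}]$ (equivalently, that the segments $[a_1,b_1],\dots,[a_{i-1},b_{i-1}]$ tile $\mathcal{O}_G[a_i-1]$), so that the $x$ appearing in the swap step is the same $x$ that appears in the statement.
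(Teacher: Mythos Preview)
Your argument is correct. The decomposition $\delta(j)=x+f$ is exactly the identity $\delta(j)=|I(\mathcal{O}_G[j-1],\{v_j\})|$ split according to whether the neighbour lies in a previous segment or in the current one; the lower bound $x\geq\delta(a_i)$ then follows from $f\leq j-a_i$ and the monotonic step condition, and the swap $A'=\mathcal{O}_G[a_i-1]\cup\{v\}$ against the optimal set $\mathcal{O}_G[a_i]$ gives the matching upper bound. The bookkeeping point you flag is harmless: since the monotonic sets tile $V$ contiguously one has $b_{i-1}=a_i-1$, so $\mathcal{O}_G[a_i-1]=\mathcal{O}_G[a_1,b_{i-1}]$ as needed.

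As for comparison with the paper: the paper does not prove this theorem at all --- it is quoted from \cite{BBK} and used as a black box. Your self-contained argument is therefore a genuine addition rather than a reworking of anything in the text. The one-vertex swap is the natural (and presumably the original) route; I see no meaningful alternative that would be shorter.
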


We extend the concept of monotonic partitions to more general partitions
$\mathfrak{P}_G = 
\{{\mathcal O}_G[a_1,b_1]<_{{\mathcal O}_G}\cdots <_{{\mathcal O}_G}{\mathcal
O}_G[a_k,b_k]\}$ of $V$, where ${\mathcal O}_G[a_i,b_i]$ are not necessarily 
monotonic sets. We say that $\mathfrak{P}_G$ is an 
\textit{isoperimetric partition} if
\begin{enumerate}
    \item The graph $({\mathcal O}_G[a_i,b_i], I_G({\mathcal O}_G[a_i,b_i]))$ 
is isoperimetric with induced order ${\mathcal O}_i$,
    such that $u <_{{\mathcal O}_G} v$ for $u,v\in {\mathcal O}_G[a_i,b_i]$ 
    iff $u < _{{\mathcal O}_i} v$.
    \item For every $v\in {\mathcal O}_G[a_i,b_i]$ it holds 
    $|I({\mathcal O}_G[a_1,b_{i-1}], \{v\})| = \delta (a_i) $.
\end{enumerate}

For example, for the Petersen graph we can partition the $\delta$-sequence in two parts
$$\delta_{\text{Petersen}} = (\underline{0,1,1,1,2},
\underline{1,2,2,2,3}).$$
Each of the parts induces a cycle of length $5$ studied in \cite{C_Tori}.

\begin{lemma}\label{deltaInPartition}
Let $G=(V,E)$ be an isoperimetric graph with an isoperimetric partition
$\mathfrak{P}_G = 
\{{\mathcal O}_G[a_1,b_1] <_{{\mathcal O}_G} \cdots <_{{\mathcal O}_G} {\mathcal O}_G[a_k,b_k]\} $.
Then for the graph $H_i=(\mathcal{O}_G[a_i,b_i], I_G(\mathcal{O}_G[a_i,b_i]))$,
$i=1,\dots,k$,
and $x,y\in \mathcal{O}_G[a_i,b_i]$ with $y<_{\mathcal{O}_G} x$ it holds that
\begin{align*}
    \Delta_G(x)- \Delta_G(y) = \Delta_{H_i}(x)-\Delta_{H_i}(y).
\end{align*}
\end{lemma}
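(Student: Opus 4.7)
The plan is to evaluate $\Delta_G(v)$ explicitly for each $v\in\mathcal{O}_G[a_i,b_i]$ and show it equals a part-dependent constant plus $\Delta_{H_i}(v)$; the stated identity then follows immediately by subtraction.

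First, I would unpack $\Delta_G$. Since $G$ is isoperimetric with optimal order $\mathcal{O}_G$, every initial segment $\mathcal{O}_G[m]$ is an optimal set, so
\begin{align*}
\delta_G(m) = |I_G(\mathcal{O}_G[m])| - |I_G(\mathcal{O}_G[m-1])|
            = |I_G(\mathcal{O}_G[1,m-1],\{\mathcal{O}_G^{-1}(m)\})|,
\end{align*}
that is, $\Delta_G(v)$ counts the edges from $v$ to the strictly earlier vertices of the order.

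Next, fix $v\in \mathcal{O}_G[a_i,b_i]$ and decompose the set of predecessors of $v$ in $\mathcal{O}_G$ as
$\mathcal{O}_G[1,\mathcal{O}_G(v)-1] = \mathcal{O}_G[a_1,b_{i-1}] \,\cup\, \mathcal{O}_G[a_i,\mathcal{O}_G(v)-1]$,
a disjoint union. Edges from $v$ split accordingly:
\begin{align*}
\Delta_G(v) = |I_G(\mathcal{O}_G[a_1,b_{i-1}],\{v\})| + |I_G(\mathcal{O}_G[a_i,\mathcal{O}_G(v)-1],\{v\})|.
\end{align*}
By clause~2 of the isoperimetric partition definition, the first summand equals $\delta_G(a_i)$, a quantity that depends only on the part, not on $v$.

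For the second summand, the edge set $I_G(\mathcal{O}_G[a_i,\mathcal{O}_G(v)-1],\{v\})$ lives entirely inside the induced subgraph $H_i$ on $\mathcal{O}_G[a_i,b_i]$. By clause~1 of the definition, $H_i$ is isoperimetric and its induced optimal order $\mathcal{O}_i$ agrees with the restriction of $\mathcal{O}_G$ to the part; in particular, the predecessors of $v$ in $\mathcal{O}_i$ are exactly $\mathcal{O}_G[a_i,\mathcal{O}_G(v)-1]$. Applying the same identity for $\delta$ inside $H_i$ gives $|I_G(\mathcal{O}_G[a_i,\mathcal{O}_G(v)-1],\{v\})| = \Delta_{H_i}(v)$. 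Combining, $\Delta_G(v) = \delta_G(a_i) + \Delta_{H_i}(v)$, and subtracting this identity for $x$ and for $y$ cancels the constant and yields the claim.

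The argument is essentially a bookkeeping decomposition, so no serious obstacle is expected; the only place that demands care is verifying that the predecessors of $v$ inside $H_i$ under $\mathcal{O}_i$ coincide with $\mathcal{O}_G[a_i,\mathcal{O}_G(v)-1]$, which is exactly the content of clause~1 of the isoperimetric partition definition.
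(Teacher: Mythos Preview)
Your proposal is correct and is exactly the approach the paper takes: the paper's one-line proof implicitly uses the identity $\Delta_G(v)=\delta_G(a_i)+\Delta_{H_i}(v)$ and subtracts, which is precisely what you derive in detail from clauses~1 and~2 of the isoperimetric-partition definition.
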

\begin{proof}
Indeed,
    $\Delta_G(x)- \Delta_G(y) 
    = \Delta_G(a_i) +\Delta_{H_i}(x)-\Delta_G(a_i)-\Delta_{H_i}(y)
    = \Delta_{H_i}(x)-\Delta_{H_i}(y)$.
\end{proof}

We call the first and last vertex of ${\mathcal O}[a,b]\in \mathfrak{P}_G$ the \textit{start} and \textit{end} of ${\mathcal O}[a,b]$.
Further denote by $\mathfrak{T}_G=\{{\mathcal O}^{-1}_G(a_1),\dots, {\mathcal O}^{-1}_G(a_k)\}$ 
the \textit{start set} of the partition $\mathfrak{P}_G$.
We say that $\mathfrak{P}_G$ is \textit{non-decreasing} if for every 
$i\in \{1,\dots, k\}$ the sequence
$\delta_{({\mathcal O}_G[a_i,b_i], I_G({\mathcal O}_G[a_i,b_i]))}$ is non-decreasing.
We say that $\mathfrak{P}_G$ is \textit{regular} if
\begin{align*}
    \delta_{({\mathcal O}_G[a_1,b_1], I_G({\mathcal O}_G[a_1,b_1]))} = \delta_{({\mathcal O}_G[a_k,b_k], I_G({\mathcal O}_G[a_k,b_k]))}
\end{align*}
Note that for any isoperimetric graph $G$, the standard monotonic partition 
$\mathfrak{M}_G$ is an isoperimetric and non-decreasing partition.
However, $\mathfrak{M}_G$ is not always regular, as the next example shows.
Consider the graph $G$ which is the union of two disjoint cliques $K_5$ and $K_4$. 
Then
\begin{align*}
    \delta_{G}=(0,1,2,3,4,0,1,2,3).
\end{align*}

For $G=G_1\square \cdots \square G_d$ and nonempty subset 
$S=\{i_1,\dots, i_k\}\subseteq \{1,\dots, d\}$
we define the \textit{subproduct} of $G$ of dimension $k$
as $G_{S}=G_{i_1}\square \cdots \square G_{i_k}$.
Let ${\mathcal O}_G$ and ${\mathcal O}_{G_S}$ be total orders on $G$ and $G_S$,
respectively.
We say that ${\mathcal O}_G$ is \textit{consistent} with ${\mathcal
O}_{G_S}$ if $x <_{{\mathcal O}_G} y$ for $x=(x_1,\dots, x_d)\in V_G$ and 
$y=(y_1,\dots, y_d)\in V_G$ with $x_j = y_j$ for $j\not\in S$ implies 
$(x_{i_1},\dots,x_{i_k}) <_{{\mathcal O}_{G_S}} (y_{i_1},\dots,y_{i_k})$.

Suppose $G_i=(V_i,E_i)$ for $i=1,\dots,d$ is an isoperimetric graph
with an optimal order ${\mathcal O}_{G_i}$, and let $\mathfrak{P}_{G_i}$ be its
isoperimetric partition with the start set $\mathfrak{T}_{G_i}$.
Define a \textit{block} of $G$ to be an element of
\begin{align*}
    \{ Z_1\times \cdots \times Z_d \bigm | Z_1\in \mathfrak{P}_{G_i},\dots , Z_d\in \mathfrak{P}_{G_d} \}
\end{align*}
and a \textit{start} of $G$ to be an element of
\begin{align*}
    \{ (z_1,\dots, z_d) \bigm | z_1\in \mathfrak{T}_{G_1},\dots , z_d\in \mathfrak{T}_{G_d}    \}.
\end{align*}
Note that blocks of $G$ and starts of $G$ are in a bijective correspondence.
We say that the partitions $\mathfrak{P}_{G_1},\dots, \mathfrak{P}_{G_d}$
compose a \textit{domination collection} if for each block 
$B=Z_1\times \cdots \times Z_d$ of $G$ one has:
\begin{enumerate}
    \item For each nonempty $S=\{i_1<\cdots < i_k\} \subseteq \{1,\dots, d\}$ 
there is an optimal domination order ${\mathcal D}^{\pi_S, k}_{H_S}$
on graph $H_S = (Z_{i_1}\times \cdots \times Z_{i_k}, I_G(Z_{i_1}\times \cdots
\times Z_{i_k}) )$.
    \item For any nonempty sets $S_1 = \{i_1<\cdots < i_{k_1}\} 
\subseteq \{1,\dots, d\}$ and $S_2 = \{i_1<\cdots < i_{k_2}\} 
\subseteq \{1,\dots, d\}$ with $S_1\subset S_2$,
the order ${\mathcal D}^{\pi_{S_2}, k_2}_{H_{S_2}}$ is consistent with 
${\mathcal D}^{\pi_{S_1}, k_1}_{H_{S_1}}$. 
\end{enumerate}

We can obtain a domination collection by defining domination orders
for each block of $G$.
Note that for any $S\subset \{1,\dots, d\}$ 
the vertices of $H_S$ form a block in the subproduct $G_S$.
Also note that $\mathfrak{P}_{G_{i_1}},\dots, \mathfrak{P}_{G_{i_k}}$
is a domination collection on $G_S$.
Hence, each of $2^d-1$ subproducts has a start set, blocks, and domination 
collection. For brevity, we denote the domination order 
on a block $B$ of some $k$-dimensional subproduct by $\mathcal{D}_B$. 

Now, we introduce a new total order for which we prove a local-global
principle in the next sections.
For nonempty $S=\{i_1<\dots < i_k\} \subseteq \{1,\dots, d\}$ define 
the \textit{block lexicographic order} ${\mathcal{BL}}_{G_S}^{k}$ of dimension 
$k$ on $G_S$ such that for $u,v\in V_G$ we have $u <_{\mathcal{BL}_{G_S}^{k}} v$ 
iff
\begin{enumerate}
    \item If $u$ and $v$ are in the same block $B$, then $u <_{{\mathcal D}_B} v$.
    \item If $u$ and $v$ are in different blocks, say $B_u$ and $B_v$, with respective starts $z_u$ and $z_v$, then $z_u <_{{\mathcal L}^k_G} z_v$.
\end{enumerate}
We abbreviate ${\mathcal{BL}}_{G_{\{1,\dots, d\}}}^{d}$ to ${\mathcal{BL}}_{G}^{d}$.
Just one more definition is needed to state our main result below.
Suppose that for $d\geq 3$ and $i=1,\dots,d$ we have an isoperimetric graph 
$G_i=(V_i,E_i)$ with optimal order ${\mathcal O}_{G_i}$ and isoperimetric partition 
$\mathfrak{P}_{G_i}=\{\mathcal{O}_{G_i}[a_{i,1},b_{i,1}] <_{\mathcal{O}_{G_i}} 
\cdots <_{\mathcal{O}_{G_i}} \mathcal{O}_{G_i}[a_{i,n_i}, b_{i,n_i}]\}$.
We say that $\mathfrak{P}_{G_1} ,\dots, \mathfrak{P}_{G_d}$ is 
a \textit{regular domination collection} if the following hold:
\begin{enumerate}
    \item The partition $\mathfrak{P}_{G_i}$ is regular for $i=2,\dots,d-1$.
    \item With
    \begin{align*}
        B_{1}&={O}_{G_2}[a_{2,1},b_{2,1}]\times \cdots \times {O}_{G_{d-1}}[a_{d-1,1},b_{d-1,1}],\\
        B_{2}&={O}_{G_{2}}[a_{2,n_2},b_{2,n_2}]\times \cdots \times {O}_{G_{d-1}}[a_{d-1,n_{d-1}},b_{d-1,n_{d-1}}].
    \end{align*}
the domination order on $B_1$ is the same as the domination order on 
$B_2$. That is, if $\mathcal{D}_{B_1}$ is induced by a permutation $\pi_1$ and 
$\mathcal{D}_{B_2}$ is induced by a permutation $\pi_2$, then $\pi_1=\pi_2$.
\end{enumerate}

\begin{theorem}\label{mainResult}
Let $G_1,\dots, G_d$ be isoperimetric graphs and let their corresponding 
isoperimetric partitions be $\mathfrak{P}_{G_1},\dots ,\mathfrak{P}_{G_d}$.
Denote $G=G_1\square \cdots \square G_d$ and suppose that the following hold:
\begin{enumerate}
    \item For $i=1,\dots,d-1$ the partition $\mathfrak{P}_{G_i}$ is 
non-decreasing.
    \item The collection of partitions $\mathfrak{P}_{G_1},\dots, \mathfrak{P}_{G_d}$ is a regular domination collection.
\end{enumerate}
If for $i,j\in \{1,\dots, d\}$ with $i<j$ the order 
$\mathcal{BL}_{G_i\square G_j}^{2}$ is optimal, then the order 
$\mathcal{BL}_G^d$ is optimal for $d\geq 3$.
\end{theorem}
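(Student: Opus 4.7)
I would proceed by induction on $d$, with the base case $d=2$ supplied by the hypothesis on the optimality of the two-dimensional block lexicographic orders. For the inductive step $d \geq 3$, I fix a candidate set $A \subseteq V_G$ with $|A|=m$ and aim to show $|I(A)| \leq |I(\mathcal{BL}^d_G[m])|$ by transforming $A$ into the target initial segment through a sequence of operations that never decrease $|I|$.

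The first phase is a 2D compression. Every 2D slice of $G$ obtained by fixing all coordinates except some pair $i<j$ is isomorphic to $G_i \square G_j$, and by hypothesis $\mathcal{BL}^2_{G_i \square G_j}$ is optimal. Replacing each slice of $A$ by the $\mathcal{BL}^2$-initial segment of the same size does not decrease $|I(A)|$, since intra-slice induced edges can only grow while the inter-slice edges remain unchanged. Iterating this over all pairs $(i,j)$ and all slices until nothing changes yields a stabilized set $A^*$ with $|I(A)| \leq |I(A^*)|$, every 2D slice of which is a $\mathcal{BL}^2$-initial segment. The consistency clause in the definition of a domination collection is precisely what ensures that the compression operators in different coordinate pairs can be interleaved without destroying each other's progress.

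The second and more delicate phase, which I expect to be the main obstacle, is to upgrade 2D-stability into a genuine $\mathcal{BL}^d_G$-initial segment; this is where the pull-push method enters. The blocks of $G$ form a coarse $d$-dimensional lattice indexed by starts, with a fine structure inside each block given by its domination order. The regularity of the partitions $\mathfrak{P}_{G_2},\ldots,\mathfrak{P}_{G_{d-1}}$, together with the equality of the domination orders on the extremal blocks $B_1$ and $B_2$, aligns these fine structures coherently across the lattice, so that the lexicographic ordering of blocks by starts is well-defined and consistent with every 2D restriction. Given a 2D-stable $A^*$ that is not yet a $\mathcal{BL}^d_G$-initial segment, I would locate a partially filled later block $B$ together with an earlier block $B'$ that is not yet full, pull a vertex $v \in B \cap A^*$ into a free slot of $B'$, and if necessary push a vertex of $B' \setminus A^*$ into the slot vacated in $B$. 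Showing that the net change in $|I|$ is nonnegative reduces to a $\Delta$-counting argument using Theorem \ref{standardPartition} and Lemma \ref{deltaInPartition}, combined with the induction hypothesis applied to the $(d-1)$-dimensional sub-products living inside a single block. The non-decreasing property of $\mathfrak{P}_{G_1},\ldots,\mathfrak{P}_{G_{d-1}}$ enters exactly here: it guarantees that $\Delta_G$ at the pulled vertex dominates $\Delta_G$ at its target, so that the cross-block edges gained outweigh any lost. Once no such pull-push move is available, $A^*$ must coincide with $\mathcal{BL}^d_G[m]$, closing the induction.
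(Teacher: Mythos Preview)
Your high-level outline (induction on $d$, compression, then a rearrangement phase) matches the paper, but two of the load-bearing steps are underspecified in ways that would actually fail.

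First, restricting the compression phase to $2$-dimensional sections is not enough. The structural facts the paper needs before any pull-push can begin---for instance, that if a later block in a slice meets $A$ then every earlier block in that slice is entirely contained in $A$ except possibly the immediate predecessor (Lemma~\ref{cubesInSliceConsecutive}), or that a shared bone forces containment (Lemma~\ref{sharedSegments})---are proved by comparing a vertex $x$ in an earlier block to a vertex $y$ in a later block that agree in as few as one coordinate. That requires $A$ to be compressed with respect to $(d-1)$-dimensional sections, not just $2$-dimensional ones. The fix is to invoke the induction hypothesis for \emph{every} $d'<d$, so that $\mathcal{BL}^{d'}$ is optimal on every subproduct $G_S$, and then iterate compression over all proper $S\subset\{1,\dots,d\}$ to obtain a strongly compressed set. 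Your use of induction ``inside a single block'' is not where it is actually needed; the domination order on a block is optimal by hypothesis, not by induction.

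Second, your description of pull-push as moving a single vertex from a later block $B$ to a free slot in an earlier block $B'$ (and possibly pushing one back) is not the mechanism, and a direct $\Delta$-comparison between two such vertices does not give the required inequality. The paper's pull-push is a two-stage auxiliary construction: one first builds a modified set $A'$ by deleting certain slices and projecting part of $A$ onto a carefully chosen parallel location, verifies $A'$ is compressed, and then \emph{compresses $A'$ in a lower-dimensional direction} to get $D'$ with $|I_G(D')|\geq|I_G(A')|$; Corollary~\ref{posetTrick} turns this into a weight inequality $\omega_G(T_{r-1})\geq\omega_G(T_r)$ for the transferred pieces. The push then maps $T_{r-1}$ back to the correct location $T_x$ in the original slice, and only at this point does the non-decreasing property of $\mathfrak{P}_{G_i}$ enter, to show $\omega_G(T_x)\geq\omega_G(T_{r-1})$. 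Moreover this has to be done in two rounds---first to make $A$ slice-compressed (Theorem~\ref{sliceCompression}), then block-compressed (Theorem~\ref{last_of_final_proof}, which itself splits into several sub-cases governed by the regular domination collection hypothesis)---and the regularity of $\mathfrak{P}_{G_2},\dots,\mathfrak{P}_{G_{d-1}}$ is what makes the projection between the last stack of one slice and the first stack of the next well-defined and weight-comparable.
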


\section{Geometry of the problem}
The objects introduced here play a key role in understanding transformations 
and proof techniques used in the paper. Throughout this section we assume that
$G_1,\dots,G_d$ are isoperimetric graphs with domination collection
$\mathfrak{P}_{G_1},\dots , \mathfrak{P}_{G_d}$ and start sets
$\mathfrak{T}_{G_1},\dots , \mathfrak{T}_{G_d}$.
Denote $G=G_1\square \cdots \square G_d =(V,E)$.
We can view $G$ as a $d$-dimensional rectangular body where the vectors 
with integer coordinates (see Figure \ref{Order of the cubes in C_4^3}) correspond to
the vertices of $G$, and the vector coordinates along each coordinate axis are ordered 
according to the isoperimetric order on $G_i$.

For $u\in V$ denote by $\Block_G(u)$ the $d$-dimensional block containing 
$u$. Since the blocks of $G$ partition $V$, for every $u\in V$ its containing
block is defined uniquely. Denote by $\Start_G(B)$ the start vertex of block
$B$ and by $\Start_G(u)$ the start vertex of the block $\Block_G(u)$.
Figure \ref{Order of the cubes in C_4^3} shows some blocks and their starts.

\begin{figure}
	\centering
	\includegraphics[width=\textwidth]{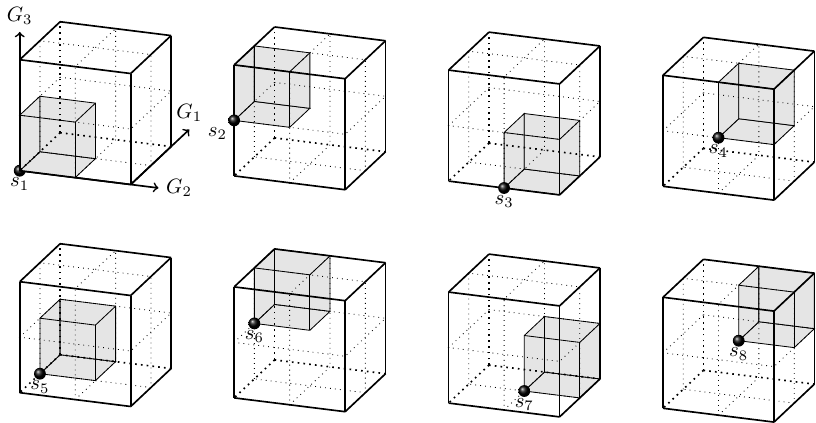}
	\caption{Ordering of blocks of $G$ for $d=3$,
		$|\mathfrak{P}_{G_1}|=|\mathfrak{P}_{G_2}|=|\mathfrak{P}_{G_3}|=2$,
		and $\mathfrak{T}_{G_1}\times \mathfrak{T}_{G_2} \times \mathfrak{T}_{G_3}= \{s_1,s_2,s_3,s_4,s_5,s_6,s_7,s_8\}$
		listed in increasing order.}
	\label{Order of the cubes in C_4^3}
\end{figure}

For $i\in \{1,\dots,d\}$ define the $i$-th \textit{bone} and the
\textit{skeleton} of a block $B=Z_1\times \cdots \times Z_d$ as
\begin{eqnarray*}
    \Bone_G(B,i) &=& \left( \prod_{j=1}^{i-1}\{\Start_{G_j}(Z_j)\}\right)
    \times Z_i \times \left( \prod_{j=i+1}^{d}\{\Start_{G_j}(Z_j)\}\right)\\
    \Skeleton(B) &=& \bigcup_{i=1}^d \Bone_G(B,i).
\end{eqnarray*}
Figure \ref{bones and skeleton} shows a visualization of the bones and skeleton of a block.

\begin{figure}
	\centering
	\includegraphics[width=\textwidth]{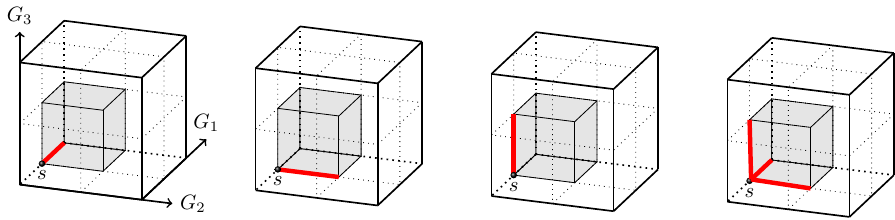}
	\caption{Bones and the skeleton of the block with start $s$.}
	\label{bones and skeleton}
\end{figure}

If $C=Y_1\times \cdots \times Y_d$ is some block other than $B$ with
$Y_i=Z_i$, we say that blocks $B$ and $C$ \textit{share} the $i$-th bone in the \textit{product decomposition} of $C$ and $B$.
Figure \ref{bone sharing} shows examples of bone sharing.

\begin{figure}
	\centering
	\includegraphics[width=0.8\textwidth]{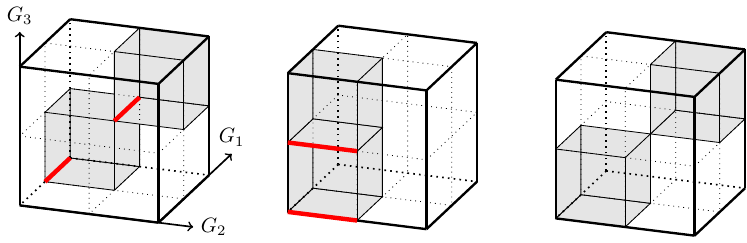}
	\caption{Blocks sharing 1st bone, 2nd bone, and no bone.}
	\label{bone sharing}
\end{figure}

For $i\in \{1,\dots, d\}$, $\sigma\in\mathfrak{T}_{G_1}\times\cdots 
\times \mathfrak{T}_{G_{i-1}}$, $\tau \in \mathfrak{T}_{G_{i+1}}\times 
\cdots \times \mathfrak{T}_{G_{d}}$, and $s\in \mathfrak{T}_{G_i}$ we consider
the vertices of the form $(\sigma,s,\tau)\in V$. 
Then define the \textit{stack} in direction $i$ at $\alpha=(\sigma,i,\tau)$ as
\begin{align*}
    \Stack_G(\alpha) = \bigcup_{s\in \mathfrak{T}_{G_i}} 
    \Block((\sigma, s, \tau)).
\end{align*}
Some stacks are visualized in Figure \ref{cube stacks C_alpha and beta}.
For the left stack one has $\sigma = (s_1,s_2)$ where $s_1$ is the fourth start of $G_1$ and $s_2$ is the first start of $G_2$,
and there is no $\tau$.
For the other stack $\tau = (s_3,s_4)$ where $s_3$ is the second start of $G_2$ and $s_4$ is the first start of $G_3$, and there is no $\sigma$.

\begin{figure}
	\centering
	\includegraphics[width=0.3\textwidth]{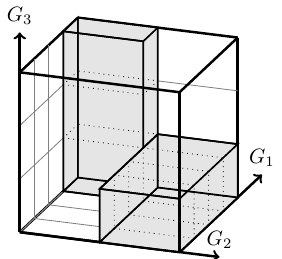}
	\caption{Visualization of stacks of $G$ for $d=3$,
		the left stack is in direction $3$, the other one is in direction $1$.}
	\label{cube stacks C_alpha and beta}
\end{figure}

The last objects we will need are called \textit{slices}.
For $i\in \{1,\dots, |\mathfrak{T}_{G_1}|\}$ denote by $s_i$ the
$i$-th start of $G_1$.
The $i$-th \textit{slice} of $G$ is defined as the union of all 
blocks $B$ whose first coordinate of $\Start_G(B)$ is $s_i$,
and denoted by $\Slice_G(i)$.
In other terms a slice is the union of all blocks that share the 1st bone.
Some slices are shown in Figure \ref{block slices}.

\begin{figure}
	\centering
	\includegraphics[width=0.55\textwidth]{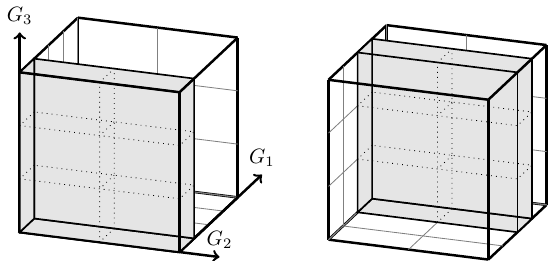}
	\caption{ $\Slice_G(1)$ and  $\Slice_G(3)$.}
	\label{block slices}
\end{figure}

For blocks $B_1$ and $B_2$ of $G$ we say that $B_1 <_{\mathcal{BL}_G^d} B_2$ 
iff $\Start_G(B_1) <_{\mathcal{BL}_G^d} \Start_G(B_2)$.
This way we obtain a total order on the set of blocks of $G$, which is
illustrated in Figure \ref{Order of the cubes in C_4^3}

Since a stack is a disjoint union of blocks,
all the blocks of a stack become totally ordered.
For an example, where blocks are ordered according to the indices of their starts, see the left part of Figure \ref{Ordering of blocks in a stack and ordering of stacks in a slice}.
For stacks $\Stack_G(\alpha)$ and $\Stack_G(\beta)$ in direction $d$ we
write $\Stack_G(\alpha) <_{\mathcal{BL}_{G}^d} \Stack_G(\beta)$ iff
the first block of $\Stack_G(\alpha)$ is less (in the above defined order)
than the first block of $\Stack_G(\beta)$.
This ordering can be observed in the right part of Figure \ref{Ordering of blocks in a stack and ordering of stacks in a slice}.

\begin{figure}
	\centering
	\includegraphics[width=0.7\textwidth]{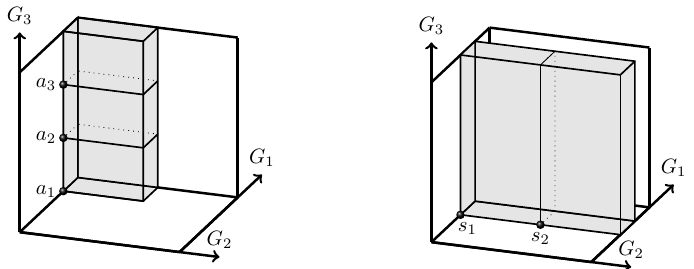}
	\caption{ Ordering of blocks in a stack and ordering of stacks in a slice. }
	\label{Ordering of blocks in a stack and ordering of stacks in a slice}
\end{figure}

Finally, we need to define an order on slices. One can view a slice as  
disjoint union of blocks. Alternatively, a slice can be viewed as 
disjoint union of stacks in the $d$-th direction 
(see Figure \ref{Ordering of blocks in a stack and ordering of stacks in a slice}).
We write $\Slice_G(i)<_{\mathcal{BL}_G^d}\Slice_G(j)$ iff $i<j$.

Summing up, $V_G$ is partitioned by slices, each slice is partitioned by 
stacks in the $d$-th direction, and each stack is partitioned by blocks.
Furthermore, there is a total order of slices, stacks, and blocks induced 
by the order $\mathcal{BL}_G^d$.

\section{Compression}
Let $G_1,\dots, G_d$ be graphs with some total orders 
$\mathcal{O}_{G_1},\dots, \mathcal{O}_{G_d}$ on their vertex sets, and let
$G=(V_G,E_G) = G_1\square \cdots \square G_d$.
For $S=\{i_1<\dots < i_k\}\subset \{1,\dots, d\}$ denote by $\mathcal{O}_{G_S}$ 
the induced order on $G_S = G_{i_1}\square \cdots \square G_{i_k}$.
Denote $\overline{S}=\{j_1<\dots <j_{d-k}\} = \{1,\dots, d\}\setminus S$.
For $x=(x_{j_1},\dots, x_{j_{d-k}})\in V_{G_{\overline{S}}}$
we define the \textit{cut} or \textit{section} of $G_S$ at $x$ to be the 
graph $G_{S}(x)=(V_{G_{S}(x)}, E_{G_{S}(x)})$,
where
\begin{align*}
    V_{G_{S}(x)} &= \{(v_1,\dots,v_d)\in V_G\bigm|\mbox{with } v_q=x_q \mbox
{ for } q\in \overline{S} \},\\
    E_{G_{S}(x)} &= I_G(V_{G_{S}(x)}).
\end{align*}
Note that the graphs $G_S(x)$ are isomorphic to $G_S$ for all $x$.
This isomorphism provides a total order $\mathcal{O}_{G_S(x)}$ on $G_S(x)$
induced by the order $\mathcal{O}_{G_S}$ on $G_S$.

For a set $A\subseteq V_G$ we define the \textit{compression} $ \Comp_{G,\mathcal{O}_{G_S}}(A)$ of $G$ with  respect to $\mathcal{O}_{G_S}$ as the operation that replaces the vertices in each $A \cap G_S$ with an initial segment (of order $\mathcal{O}_{G_S(x)}$ inside the cut $G_S(x)$)
of the same size. 
More formally we use the definition from \cite{H_Book},
\begin{align*}
    \Comp_{G,\mathcal{O}_{G_S}}(A) = \bigcup_{x\in V_{G_{ \{1,\dots, d\} 
    \setminus S}}} \mathcal{O}_{G_S(x)}[|A\cap V_{G_S(x)}|].
\end{align*}

The following lemmas \ref{downIsBetter} - \ref{weightCompressed} have been 
discovered and used by many authors, see, e.g. \cite{H_Book}, so we present
them without a proof here.

\begin{lemma}\label{downIsBetter}
If $\mathcal{O}_{G_S}$ is optimal then for any $A\subseteq V_G$ it holds:
\begin{enumerate}
    \item $|\Comp_{G,\mathcal{O}_{G_S}}(A)|=|A|$.
    \item If $B\subseteq A$ then 
    $\Comp_{G,\mathcal{O}_{G_S}}(B) \subseteq \Comp_{G,\mathcal{O}_{G_S}}(A)$.
    \item $|I_G(A)|\leq |I_G(\Comp_{G,\mathcal{O}_{G_S}}(A))|$.
\end{enumerate}
\end{lemma}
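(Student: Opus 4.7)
The plan is to handle the three parts in order, leveraging that the cuts $\{V_{G_S(x)}\}_{x \in V_{G_{\overline{S}}}}$ partition $V_G$ and that the compression acts independently on each cut. For part (1), since $A$ is the disjoint union of the sets $A\cap V_{G_S(x)}$ and an initial segment $\mathcal{O}_{G_S(x)}[k]$ has cardinality exactly $k$, summing cut by cut gives $|\Comp_{G,\mathcal{O}_{G_S}}(A)|=\sum_x |A\cap V_{G_S(x)}|=|A|$. For part (2), the hypothesis $B\subseteq A$ yields $|B\cap V_{G_S(x)}|\le |A\cap V_{G_S(x)}|$ in each cut, and initial segments of a total order are nested by size, so the desired inclusion holds cut by cut and then globally by taking the disjoint union.

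The substantive part is (3). The first step is to split $I_G(A)$ according to the direction of each edge under the Cartesian product structure. Any edge of $G$ differs in exactly one coordinate; if that coordinate lies in $S$, both endpoints share identical $\overline{S}$-coordinates and the edge is contained in a single cut $G_S(x)$, while if it lies in $\overline{S}$, the two endpoints share identical $S$-coordinates and sit in cuts $G_S(x)$ and $G_S(y)$ indexed by a pair adjacent in $G_{\overline{S}}$. Writing $A_x\subseteq V_{G_S}$ for the projection of $A\cap V_{G_S(x)}$ under the natural identification $V_G\cong V_{G_S}\times V_{G_{\overline{S}}}$, this gives the decomposition
\begin{align*}
|I_G(A)| = \sum_{x\in V_{G_{\overline{S}}}} |I_{G_S(x)}(A\cap V_{G_S(x)})| \;+\; \sum_{\{x,y\}\in E_{G_{\overline{S}}}} |A_x\cap A_y|.
\end{align*}

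For the first (within-cut) sum, the optimality of $\mathcal{O}_{G_S}$ transfers through the isomorphism $G_S(x)\cong G_S$ to the order $\mathcal{O}_{G_S(x)}$, so replacing $A\cap V_{G_S(x)}$ by the initial segment of the same size does not decrease $|I_{G_S(x)}(A\cap V_{G_S(x)})|$. For the second (cross-cut) sum, the trivial bound $|A_x\cap A_y|\le \min(|A_x|,|A_y|)$ becomes an equality after compression, since the intersection of two initial segments in $\mathcal{O}_{G_S}$ is the shorter of the two, of size exactly $\min(|A_x|,|A_y|)$. Each term therefore does not decrease, and summing the two bounds yields $|I_G(A)|\le |I_G(\Comp_{G,\mathcal{O}_{G_S}}(A))|$.

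The only real obstacle is the bookkeeping: identifying which edges of $G$ fall under which case of the direction split, and checking that $\mathcal{O}_{G_S(x)}$ inherits optimality from $\mathcal{O}_{G_S}$ via the canonical isomorphism. Once the edge decomposition is in place, both inequalities are immediate---one from the optimality hypothesis, the other from a pure cardinality argument---so no case analysis is required.
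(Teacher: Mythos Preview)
Your argument is correct and is exactly the standard proof one finds in the literature (e.g., in Harper's book). Note, however, that the paper does not actually prove this lemma: it explicitly states that Lemmas~\ref{downIsBetter}--\ref{weightCompressed} ``have been discovered and used by many authors, see, e.g.\ \cite{H_Book}, so we present them without a proof here.'' So there is no proof in the paper to compare against; your write-up simply supplies the omitted standard argument, and it does so cleanly.
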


The next lemma informally says that if the orders in question are consistent, 
then after a finite time of applying the compression one gets a
\textit{stable} set.

\begin{lemma}\label{compProcess}
Let $S_0,\dots, S_{p-1} \subset \{1,\dots, d\}$ 
and $\mathcal{S}=(S_0,\dots , S_{p-1})$. For $A\subseteq V_G$ and $n\geq 1$ define
\begin{align*}
    \Comp_{G, \mathcal{S}}^n(A) = \begin{cases} 
      \Comp_{G, \mathcal{O}_{G_{S_0}}}(A) & \mbox{ if } n=1, \\
      \Comp_{G, \mathcal{O}_{G_{S_{n \bmod p}}}}(\Comp_{G, \mathcal{S}}^{n-1}(A) ) & \mbox{ if } n\geq 2. 
   \end{cases}
\end{align*}
If the order $\mathcal{O}_{G_{S_q}}$ is consistent with $\mathcal{O}_G$ for
$q=1,\dots,n$ then the sequence $(\Comp_{G, \mathcal{S}}^n(A))_{n=1}^\infty$ 
is eventually constant. In other words, there is $n_0$ such that for all
$n\geq n_0$ one has 
$\Comp_{G, \mathcal{S}}^{n+1}(A)=\Comp_{G,\mathcal{S}}^n(A)$.
\end{lemma}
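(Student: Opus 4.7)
The plan is to introduce a bounded, integer-valued monovariant that strictly decreases whenever a compression step actually changes the set; since it cannot decrease indefinitely, the sequence must stabilize. Concretely, assign to each vertex $v\in V_G$ its $\mathcal{O}_G$-rank and define the potential
\[
W(A) = \sum_{v\in A} \mathcal{O}_G(v).
\]
By Lemma \ref{downIsBetter}(1) the cardinality $|A|$ is preserved under every compression, so $W$ ranges over a finite subset of $\mathbb{Z}_{\geq 0}$; in particular, any strictly decreasing sequence of values of $W$ is finite.

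Next, I would verify that a single step $A\mapsto \Comp_{G,\mathcal{O}_{G_{S_q}}}(A)$ can only decrease $W$, and strictly so unless the set is unchanged. The compression acts independently on each cut $V_{G_{S_q}(x)}$, replacing $A\cap V_{G_{S_q}(x)}$ with the initial segment of the same cardinality under $\mathcal{O}_{G_{S_q}(x)}$. By the consistency hypothesis, for any $u,v$ lying in the same cut we have $u<_{\mathcal{O}_G} v$ iff $u <_{\mathcal{O}_{G_{S_q}(x)}} v$, so this initial segment consists precisely of the $|A\cap V_{G_{S_q}(x)}|$ vertices of smallest $\mathcal{O}_G$-rank inside that cut. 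Replacing any subset of a finite linearly ordered set with the initial segment of the same size weakly decreases the sum of ranks, and strictly decreases it unless the subset already coincides with the initial segment. Summing the per-cut inequalities over all $x$ yields
\[
W\bigl(\Comp_{G,\mathcal{O}_{G_{S_q}}}(A)\bigr) \leq W(A),
\]
with equality iff $\Comp_{G,\mathcal{O}_{G_{S_q}}}(A) = A$.

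Finally, the eventual constancy is immediate: the integer-valued sequence $\bigl(W(\Comp^n_{G,\mathcal{S}}(A))\bigr)_{n\geq 1}$ is non-increasing, hence eventually constant at some step $n_0$; once $W$ stabilizes, the set itself must stabilize, since any nontrivial change at a later step would force a further strict drop in $W$. The only step requiring care is the per-cut rank comparison, which follows directly from consistency, so I do not expect any substantive obstacle.
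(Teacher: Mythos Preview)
The paper does not give its own proof of this lemma; it is stated as well known with a reference to Harper's book, so there is nothing to compare against. Your monovariant argument via the rank sum $W(A)=\sum_{v\in A}\mathcal{O}_G(v)$ is correct and is the standard way to establish such stabilization results. One small remark worth making explicit: the paper's definition of ``$\mathcal{O}_G$ is consistent with $\mathcal{O}_{G_S}$'' is phrased as a one-sided implication, but since both $\mathcal{O}_G$ (restricted to a cut) and $\mathcal{O}_{G_{S_q}(x)}$ are total orders on the same finite set, the implication is automatically an equivalence, which is what justifies your claim that the initial segment in the cut order coincides with the vertices of smallest $\mathcal{O}_G$-rank in that cut.
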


Denote by $\Comp_{G,\mathcal{S}}(A)$ the resulting stable set in Lemma 
\ref{compProcess}. 
We say that $A\subseteq V_G$ is \textit{compressed} if for $\mathcal{S}=(\{1\},\dots, \{d\})$ we have
$\Comp_{G,\mathcal{S}}(A)=A$. 
Furthermore, we say that $A$ is \textit{strongly compressed} if 
$\Comp_{G,S}(A)=A$ for any proper subset $S$ of $\{1,\dots, d\}$.
In the sequel we will be looking for solutions to the edge-isoperimetric
problem that are compressed sets. 

For optimal orders $\mathcal{O}_{G_1},\dots,\mathcal{O}_{G_d}$ and $A\subseteq
V_G$ define the weight of $A$ as
\begin{align*}
    \omega_G(A) = \sum_{(i_1,\dots,i_n) \in A} \left( \sum_{j = 1}^n
\Delta_{G_j}(i_j)\right).
\end{align*}

\begin{lemma}\label{weightCompressed}
If $\mathcal{O}_{G_1},\dots, \mathcal{O}_{G_d}$ are optimal orders and $A\subseteq
V_G$ is a compressed set then
$|I_G(A)| = \omega_G(A)$.
\end{lemma}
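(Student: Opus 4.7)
The plan is to compute $|I_G(A)|$ by decomposing the edges of $I_G(A)$ according to their direction in the Cartesian product and matching each direction's contribution with the corresponding term in the definition of $\omega_G(A)$.

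\textbf{Step 1 (back-neighbor identity).} First I would establish that for any isoperimetric graph $H$ with optimal order $\mathcal{O}_H$ and any $v \in V_H$, the quantity $\Delta_H(v)$ equals the number of $H$-neighbors of $v$ that precede $v$ in $\mathcal{O}_H$. Indeed, setting $k = \mathcal{O}_H(v)$, optimality gives $\delta_H(k) = |I_H(\mathcal{O}_H[k])| - |I_H(\mathcal{O}_H[k-1])|$, and this difference counts precisely the edges joining the $k$-th vertex of $\mathcal{O}_H$ to its predecessors.

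\textbf{Step 2 (fibers are initial segments).} Next I would unpack the hypothesis that $A$ is compressed. With $\mathcal{S}=(\{1\},\dots,\{d\})$, the equation $\Comp_{G,\mathcal{S}}(A) = A$ says that for each direction $j$ and each fixed choice of the remaining coordinates $(v_1,\dots,v_{j-1},v_{j+1},\dots,v_d)$, the $j$-th fiber of $A$ is an initial segment of $\mathcal{O}_{G_j}$. Consequently, whenever $v=(v_1,\dots,v_d) \in A$ and $u_j <_{\mathcal{O}_{G_j}} v_j$, the vertex obtained from $v$ by replacing $v_j$ with $u_j$ also lies in $A$.

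\textbf{Step 3 (summation by direction).} Every edge of $I_G(A)$ lies in a unique direction $j$. Orienting each such edge toward its larger endpoint in direction $j$ yields
\[
|I_G(A)| = \sum_{v \in A} \sum_{j=1}^d \bigl|\{u_j \in V_{G_j} : u_j \sim_{G_j} v_j,\ u_j <_{\mathcal{O}_{G_j}} v_j,\ (v_1,\dots,u_j,\dots,v_d) \in A\}\bigr|.
\]
By Step 2 the last membership condition is automatic, so the inner cardinality collapses to the number of $G_j$-neighbors of $v_j$ strictly below $v_j$ in $\mathcal{O}_{G_j}$; by Step 1 this equals $\Delta_{G_j}(v_j)$. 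Summing yields $|I_G(A)| = \sum_{v \in A}\sum_{j=1}^d \Delta_{G_j}(v_j) = \omega_G(A)$.

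The argument is pure bookkeeping, and I do not foresee a genuine obstacle. The only conceptual point worth highlighting is that compressedness along the single-coordinate directions is precisely the structural property needed to promote a back-neighbor $u_j$ of $v_j$ inside $G_j$ into an actual edge of $G$ between two elements of $A$; without it, one would merely obtain the inequality $|I_G(A)| \leq \omega_G(A)$.
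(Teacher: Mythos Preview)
Your argument is correct and is the standard proof of this well-known fact. The paper itself does not supply a proof of this lemma: it explicitly states that Lemmas~\ref{downIsBetter}--\ref{weightCompressed} ``have been discovered and used by many authors'' and presents them without proof, citing Harper's book. Your three steps (back-neighbor interpretation of $\Delta_{G_j}$, initial-segment fibers from compressedness, and direction-by-direction edge count) are exactly the route one finds in the cited literature, so there is nothing to contrast.
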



This lemma immediately implies the following assertion.
\begin{corollary}\label{posetTrick}
Let $A\subseteq V_G$ be compressed, $T_1\subseteq V_G\setminus A$, and
$T_2\subseteq A$. If $(A\cup T_1)\setminus T_2$ is compressed then
\begin{align*}
    |I_G(A)| - |I_G((A\cup T_1)\setminus T_2)| = \omega_G(A)-\omega_G((A\cup T_1)\setminus T_2)
    = \omega_G(T_2) - \omega_G(T_1).
\end{align*}
\end{corollary}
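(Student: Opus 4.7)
The plan is to deduce both equalities by a direct application of Lemma \ref{weightCompressed} combined with additivity of $\omega_G$ on disjoint unions; there is essentially no nontrivial obstacle, which is why the authors preface the statement with ``immediately implies.''

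First I would set $A' = (A\cup T_1)\setminus T_2$. By hypothesis both $A$ and $A'$ are compressed, so Lemma \ref{weightCompressed} yields $|I_G(A)| = \omega_G(A)$ and $|I_G(A')| = \omega_G(A')$. Subtracting these two identities gives the first equality
\begin{align*}
    |I_G(A)| - |I_G(A')| = \omega_G(A) - \omega_G(A').
\end{align*}

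Next, for the second equality, I would exploit that $\omega_G$ is defined as a sum over vertices and therefore is additive on disjoint unions. The hypotheses $T_1\subseteq V_G\setminus A$ and $T_2\subseteq A$ imply $T_1\cap A = \emptyset$ and in particular $T_1\cap T_2=\emptyset$, so $A' = (A\setminus T_2)\sqcup T_1$ is a disjoint union, and similarly $A = (A\setminus T_2)\sqcup T_2$. Applying additivity to both decompositions and subtracting,
\begin{align*}
    \omega_G(A) - \omega_G(A')
    &= \bigl(\omega_G(A\setminus T_2)+\omega_G(T_2)\bigr) - \bigl(\omega_G(A\setminus T_2)+\omega_G(T_1)\bigr)\\
    &= \omega_G(T_2) - \omega_G(T_1),
\end{align*}
which chains with the first equality to close the proof.

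The only thing one has to be careful about is the elementary set-theoretic identity $(A\cup T_1)\setminus T_2 = (A\setminus T_2)\sqcup T_1$, which relies crucially on $T_1\cap T_2=\emptyset$ (coming from $T_1\cap A=\emptyset$ and $T_2\subseteq A$); I would mention this explicitly since it is the one spot where the hypotheses on $T_1$ and $T_2$ are actually used. Everything else is bookkeeping, so no case analysis or further ideas are needed.
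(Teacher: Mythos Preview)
Your proof is correct and matches the paper's approach: the paper simply states that the corollary ``immediately'' follows from Lemma~\ref{weightCompressed}, and what you have written is exactly the intended unpacking via additivity of $\omega_G$ over the disjoint decompositions $A=(A\setminus T_2)\sqcup T_2$ and $A'=(A\setminus T_2)\sqcup T_1$.
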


We extend compression to the geometric objects introduced in the previous
section. Suppose that $A\subseteq V_G$ is compressed and there is a 
domination collection $\mathfrak{P}_{G_1},\dots, \mathfrak{P}_{G_d}$. 
Let $B_1,\dots, B_p$ be all blocks of $G$ ordered so that
$B_a <_{\mathcal{BL}_G^d} B_b$ whenever $a<b$.
Denote by $r$ be the largest block number such that $A\cap B_r\neq\emptyset$.
We say that $A$ is \textit{block compressed} if $B_i\subseteq A$ for $i<r$.

Consider a slice $\Slice_G(q)$ and let $B_1,\dots, B_{p_q}$ be its blocks 
ordered so that $B_a <_{\mathcal{BL}_G^d} B_b$ whenever $a<b$. 
Denote by $r_q$ be the largest block number such that 
$A\cap B_{r_q}\neq \emptyset$.
We say that $A$ is \textit{slice compressed} if for each
$q=1,\dots,|\mathfrak{P}_{G_1}|$ and $i<r_q$ it holds 
$B_i\subseteq A$. Thus, if $A$ is slice compressed then it is block
compressed.

\section{Proof of the Main result}
Let $G_1,\dots,G_d$ be isoperimetric graphs with a regular domination
collection of non-decreasing isoperimetric partitions 
$\mathfrak{P}_{G_1},\dots,\mathfrak{P}_{G_d}$. We also assume that
the order $\mathcal{BL}_{G_i\square G_j}^{2}$ is optimal for all
$i,j\in \{1,\dots, d\}$ with $i<j$ and let $A\subseteq V_G$ be an optimal
set.

The theorem is proved by introducing a series of operations that transform the
set $A$ into the initial segment of order $\mathcal{BL}_G^d$ of the same size 
without reducing the number of induced edges. First, we make $A$ 
slice-compressed (Theorem \ref{sliceCompression}), then block-compressed 
(Theorem \ref{last_of_final_proof}), and finally use a special transformation 
of the resulting set.
We assume that the theorem holds for all $d'<d$ and proceed by induction on
$d$ for $d\geq 3$.

\begin{lemma}\label{Consistency}
The order $\mathcal{BL}_G^d$ is consistent with $\mathcal{BL}_{G_S}^k$ 
for any $S=\{i_1<\dots < i_k\}\subset\{1,\dots,d\}$.
\end{lemma}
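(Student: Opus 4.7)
The plan is to unfold the definition of $\mathcal{BL}_G^d$ and do a two-case analysis based on whether $x$ and $y$ lie in the same block of $G$. Take $x,y\in V_G$ with $x<_{\mathcal{BL}_G^d} y$ and $x_j=y_j$ for $j\notin S$; the goal is to show $(x_{i_1},\dots,x_{i_k})<_{\mathcal{BL}_{G_S}^k}(y_{i_1},\dots,y_{i_k})$. The key preliminary observation is that, since $x_j=y_j$ for $j\notin S$, the block components $Z_j, W_j$ of $B_x=Z_1\times\cdots\times Z_d=\Block_G(x)$ and $B_y=W_1\times\cdots\times W_d=\Block_G(y)$ coincide for $j\notin S$. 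Consequently $B_x=B_y$ in $G$ if and only if the $S$-projections of $x$ and $y$ lie in the same block of $G_S$, so the case split on the two sides of the implication lines up.

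In the same-block case, $x<_{\mathcal{BL}_G^d} y$ reduces to $x<_{\mathcal{D}_{B_x}} y$ by the definition of $\mathcal{BL}$. I would then invoke property 2 of a domination collection applied to $S\subset\{1,\dots,d\}$, which states that the domination order on the block $B_x$ of $G$ is consistent with the domination order on the corresponding block $B_x^S$ of $G_S$. That consistency gives $(x_{i_1},\dots,x_{i_k})<_{\mathcal{D}_{B_x^S}}(y_{i_1},\dots,y_{i_k})$ at once, matching the same-block clause of $\mathcal{BL}_{G_S}^k$. In the different-block case, $x<_{\mathcal{BL}_G^d} y$ amounts to $\Start_G(B_x)<_{\mathcal{L}^d}\Start_G(B_y)$; since these starts agree outside $S$, the smallest coordinate $p$ at which they differ must lie in $S$, say $p=i_t$. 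A short lex-order check then shows that the first $t-1$ coordinates of the $S$-projected starts agree while the $t$-th coordinates compare as $x_{i_t}<_{\eta_{i_t}} y_{i_t}$, so $(s^x_{i_1},\dots,s^x_{i_k})<_{\mathcal{L}^k}(s^y_{i_1},\dots,s^y_{i_k})$, which is the different-block clause of $\mathcal{BL}_{G_S}^k$.

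There is no real obstacle here: the argument is pure bookkeeping with the definition of $\mathcal{BL}$, relying on property 2 of the domination collection in one case and on the elementary mechanics of lex order in the other. The only care point is verifying that the same-block versus different-block dichotomy is preserved under restriction to $S$, which is exactly what the preliminary observation secures.
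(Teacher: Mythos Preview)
Your proposal is correct and follows essentially the same approach as the paper: a two-case split on whether $x$ and $y$ lie in the same block of $G$, invoking property~2 of the domination collection in the same-block case and the consistency of lexicographic order under coordinate restriction in the different-block case. Your explicit preliminary observation that the same-block/different-block dichotomy is preserved under projection to $S$ is stated more carefully than in the paper, which simply asserts the corresponding block comparisons without isolating that step.
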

\begin{proof}
Let $x=(x_1,\dots,x_d)$ and $y=(y_1,\dots, y_d)$ be vertices of $V_G$ with 
$x <_{\mathcal{BL}_G^d} y$ and $x_j=y_j$ for $j\not\in S$.
Denote $x'=(x_{i_1},\dots,x_{i_k})$ and $y'=(y_{i_1},\dots,y_{i_k})$.
If $\Block_{G}(x) <_{\mathcal{BL}_{G_S}^d} \Block_{G}(y)$ then 
$\Block_{G_S}(x') <_{\mathcal{BL}_{G_S}^k} \Block_{G_S}(y')$
since lexicographic order is consistent.
This implies $x' <_{\mathcal{BL}_{G_S}^k} y'$.

If $\Block_{G}(x) = \Block_{G}(y)$ then $\Block_{G_S}(x') = \Block_{G_S}(y')$,
and the domination order $\mathcal{D}_{\Block_{G}(x)}$ is consistent with
the domination order $\mathcal{D}_{\Block_{G_S}(x')}$,
since the partitions $\mathfrak{P}_{G_1},\dots, \mathfrak{P}_{G_d}$ 
form a domination collection. Thus, $x'<_{\mathcal{BL}_{G_S}^k} y'$.
\end{proof}

This lemma along with lemmas \ref{downIsBetter}, \ref{compProcess} implies
that there is no loss of generality to assume that $A\subseteq V_G$ is
strongly compressed. To make $A$ slice-compressed, several auxiliary results
are needed. We say that two blocks $B_1$ and $B_2$ in the same stack
$\Stack_G(\alpha)$ are \textit{consecutive},
if there is no block $B_3\subseteq \Stack_G(\alpha)$
such that $B_1 <_{\mathcal{BL}_G^d} B_3 <_{\mathcal{BL}_G^d} B_2$.

\begin{lemma}\label{nonEmptyImpliesSkeleton}
Let $A\subseteq V_G$ be strongly compressed and $B_1 <_{\mathcal{BL}_G^d} B_2$ be
consecutive blocks of a stack in some direction $i$.
If $B_2\cap A \neq \emptyset$ then $\Skeleton_G(B_1)\subseteq A$. 
\end{lemma}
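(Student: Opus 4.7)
The plan is to show $\Skeleton_G(B_1) \subseteq A$ bone by bone, treating $\Bone_G(B_1,i)$ separately from $\Bone_G(B_1,j)$ for $j\ne i$. Write $B_1 = Z_1\times\cdots\times Z_d$ and $B_2 = Y_1\times\cdots\times Y_d$. Consecutivity of $B_1$ and $B_2$ in the stack in direction $i$ gives $Y_k = Z_k$ for every $k\ne i$, and $Y_i$ is the partition element of $\mathfrak{P}_{G_i}$ immediately following $Z_i$. Let $s=(s_1,\dots,s_d)=\Start_G(B_1)$ and pick any $v=(v_1,\dots,v_d)\in B_2\cap A$. Throughout, I use strong compression in the form: for every direction $k$, each axis-parallel fiber intersects $A$ in an initial segment of $\mathcal{O}_{G_k}$, so replacing one coordinate of an $A$-vertex by a coordinate that is smaller in $\mathcal{O}_{G_k}$ keeps the vertex in $A$.

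For $\Bone_G(B_1,i)$: every element of $Z_i$ has $\mathcal{O}_{G_i}$-position strictly below that of $v_i\in Y_i$, so the singleton compression in direction $i$ through $v$ forces $u^{(z_i)}:=(v_1,\dots,v_{i-1},z_i,v_{i+1},\dots,v_d)\in A$ for every $z_i\in Z_i$. From each such $u^{(z_i)}$ I iteratively apply singleton compressions in the remaining $d-1$ directions, each time replacing the current $k$-th coordinate $v_k\in Z_k$ by $s_k=\Start_{G_k}(Z_k)$; the substitution is legal because $s_k\leq_{\mathcal{O}_{G_k}} v_k$. After exhausting all directions $(s_1,\dots,s_{i-1},z_i,s_{i+1},\dots,s_d)\in A$, which is exactly the $i$-th bone.

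For $\Bone_G(B_1,j)$ with $j\ne i$, first iteratively pull $v$ down in each direction $k\notin\{i,j\}$ by singleton compression, producing a vertex $v'\in A$ with $v'_k=s_k$ for $k\notin\{i,j\}$, $v'_i=v_i$, and $v'_j=v_j$. This $v'$ lies in the two-dimensional cut $G_{\{i,j\}}(x)$ at $x=(s_k)_{k\notin\{i,j\}}$. Because $d\ge 3$, $\{i,j\}$ is a proper subset of $\{1,\dots,d\}$, so strong compression together with the hypothesis that $\mathcal{BL}_{G_i\square G_j}^2$ is optimal and Lemma~\ref{downIsBetter} force $A\cap G_{\{i,j\}}(x)$ to be an initial segment of $\mathcal{BL}_{G_{\{i,j\}}}^2$. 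The 2D blocks of this cut are precisely the products $P_i\times P_j$ with $P_i\in\mathfrak{P}_{G_i}$ and $P_j\in\mathfrak{P}_{G_j}$; since $Y_j=Z_j$, the vertex $v'$ sits in the cut-block $Y_i\times Z_j$. A comparison of starts under $\mathcal{L}^2$ shows that $Z_i\times Z_j$ strictly precedes $Y_i\times Z_j$ in block order (they differ only in the $G_i$-component, where $Z_i$ precedes $Y_i$), irrespective of whether $i<j$ or $i>j$; hence the entire block $Z_i\times Z_j$ lies in $A\cap G_{\{i,j\}}(x)$. Since $s_i\in Z_i$, the $j$-th bone $\{(s_1,\dots,s_{j-1},z_j,s_{j+1},\dots,s_d):z_j\in Z_j\}$ sits inside this product and is therefore contained in $A$.

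The main obstacle is the $j$-th bone step: singleton compressions alone cannot cross from the block of $v'$ into the adjacent block $Z_i\times Z_j$, since the two blocks differ in the $i$-th partition coordinate, so one must genuinely exploit a two-dimensional initial-segment argument inside the cut $G_{\{i,j\}}(x)$. The remaining verifications — that iterated singleton compressions keep the intermediate vertex in $A$, and that the lex comparison of block-starts always puts $Z_i\times Z_j$ before $Y_i\times Z_j$ — are routine.
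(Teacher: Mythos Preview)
Your proof is correct, but it is considerably more elaborate than the paper's. The paper avoids the case split entirely by first observing that $\Start_G(B_2)\in A$ (this follows from $B_2\cap A\neq\emptyset$ by iterated singleton compressions, just as you do for your vertex $v$). Then for \emph{any} bone vertex $x\in\Bone_G(B_1,j)$, the paper notes that $x$ and $\Start_G(B_2)$ can differ only in coordinates $i$ and $j$, hence agree in at least $d-2\geq 1$ coordinates; fixing one such coordinate and applying strong compression on the resulting $(d-1)$-dimensional cut (together with $x<_{\mathcal{BL}_G^d}\Start_G(B_2)$ and consistency, Lemma~\ref{Consistency}) gives $x\in A$ in one stroke. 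So the paper uses a single high-dimensional compression where you use two separate arguments (iterated singletons for $j=i$, a two-dimensional block comparison for $j\neq i$). Your route makes the mechanism more explicit and only needs $2$-dimensional strong compression, at the cost of a case analysis the paper sidesteps. One minor point: you do not need to invoke the optimality of $\mathcal{BL}_{G_i\square G_j}^2$ or Lemma~\ref{downIsBetter} to conclude that $A\cap G_{\{i,j\}}(x)$ is an initial segment---that is immediate from the definition of $A$ being strongly compressed.
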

\begin{proof}
Note that $\Start_G(B_2)\in A$, since $B_2\cap A \neq \emptyset$ 
and $A$ is strongly compressed.
We show that all bones of $B_1$ are in $A$.
Let $\Start_G(B_1)=(s_1,\dots, s_d)$ and 
$x=(s_1,\dots,s_{j-1},x_j,s_{j+1},\dots, s_d)\in\Bone_G(B_1,j)$ for 
some $j\in \{1,\dots, d\}$. 
So, at least $d-1$ coordinates of $x$ and $\Start_G(B_1)$ are the same.
Also, all coordinates except the $i$-th one of $\Start_G(B_1)$ and 
$\Start_G(B_2)$ match because the blocks are in the same stack in 
direction $i$.
Hence, $\Start_G(B_2)$ and $x$ share at least $d-2\geq 1$ equal coordinates.
Since $A$ is strongly compressed, $x <_{\mathcal{BL}_G^d} \Start_G(B_2)$,
and $\Start_G(B_2)$ and $x$ match in at least $1$ coordinate, 
we conclude $x\in A$.
\end{proof}

\begin{lemma}\label{sharedSegments}
Let $A\subseteq V_G$ be strongly compressed and blocks $B_1$ and $B_2$ with
$B_1 <_{\mathcal{BL}_G^d} B_2$ share the $i$-th bone for some 
$i\in \{1,\dots, d\}$ and $\Bone_G(B_2,i)\subseteq A$. Then $B_1\subseteq A$.
\end{lemma}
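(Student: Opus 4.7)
The plan is to fix an arbitrary $v=(v_1,\dots,v_d)\in B_1$ and exhibit a partner $w\in A$ that lives in the same section of $G$ as $v$ (perpendicular to direction $i$) and is $\geq v$ in the induced order; then strong compression of $A$ with respect to $S=\{1,\dots,d\}\setminus\{i\}$ forces $v\in A$. Since $v$ is arbitrary, this gives $B_1\subseteq A$.

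Write $B_1=Z_1^{(1)}\times\cdots\times Z_d^{(1)}$ and $B_2=Z_1^{(2)}\times\cdots\times Z_d^{(2)}$. Because $B_1$ and $B_2$ share the $i$-th bone, $Z_i^{(1)}=Z_i^{(2)}$, so $v_i\in Z_i^{(2)}$. I would define the companion
\begin{align*}
w=\bigl(\Start_{G_1}(Z_1^{(2)}),\dots,\Start_{G_{i-1}}(Z_{i-1}^{(2)}),\,v_i,\,\Start_{G_{i+1}}(Z_{i+1}^{(2)}),\dots,\Start_{G_d}(Z_d^{(2)})\bigr),
\end{align*}
which by construction lies in $\Bone_G(B_2,i)\subseteq A$. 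Both $v$ and $w$ share the value $v_i$ at coordinate $i$, hence both lie in the cut $V_{G_S(v_i)}$, with $S=\{1,\dots,d\}\setminus\{i\}$.

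The crux is to verify $v\leq w$ in the induced order $\mathcal{BL}_{G_S(v_i)}^{d-1}$, equivalently that the projections $v',w'\in V_{G_S}$ obtained by deleting coordinate $i$ satisfy $v'<_{\mathcal{BL}_{G_S}^{d-1}} w'$. Note $w'=\Start_{G_S}(B_2^S)$, where $B_j^S$ denotes the block in $G_S$ obtained by deleting the $i$-th factor of $B_j$; and $v'\in B_1^S$. Since $B_1\neq B_2$ but $Z_i^{(1)}=Z_i^{(2)}$, the blocks $B_1^S\neq B_2^S$ differ, so it suffices to show $B_1^S<_{\mathcal{BL}_{G_S}^{d-1}}B_2^S$, i.e.\ a plain lex comparison of starts in $G_S$. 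By assumption $\Start_G(B_1)<_{\mathcal{L}^d_G}\Start_G(B_2)$, so these starts agree on some prefix $1,\dots,k-1$ and differ at coordinate $k$; since they agree at coordinate $i$, necessarily $k\neq i$. Deleting the $i$-th coordinate preserves the lex-verdict in both sub-cases $i<k$ and $i>k$, so $\Start_{G_S}(B_1^S)<_{\mathcal{L}_{G_S}^{d-1}}\Start_{G_S}(B_2^S)$ as needed.

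Finally, strong compression of $A$ with respect to $S$ (Lemma \ref{downIsBetter} combined with Lemma \ref{Consistency} and Lemma \ref{compProcess} ensures this order is legitimate to compress against) says that $A\cap V_{G_S(v_i)}$ is an initial segment of $\mathcal{BL}_{G_S(v_i)}^{d-1}$; together with $w\in A$ and $v\leq w$ in this order, this forces $v\in A$. The only potentially finicky step is the coordinate-deletion bookkeeping in the preceding paragraph — splitting into $i<k$ and $i>k$ is straightforward but must be written carefully so as not to confuse coordinate indices in $G$ with those in $G_S$.
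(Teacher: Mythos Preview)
Your argument is correct and is essentially the paper's proof: take an arbitrary vertex of $B_1$, pair it with the vertex of $\Bone_G(B_2,i)$ having the same $i$-th coordinate, check the order relation, and invoke strong compression with respect to $S=\{1,\dots,d\}\setminus\{i\}$. The only cosmetic difference is that the paper compares $x$ and $y$ in the full order $\mathcal{BL}_G^d$ (immediate, since $x\in B_1$ and $y\in B_2$ with $B_1<_{\mathcal{BL}_G^d}B_2$) and then tacitly uses consistency (Lemma~\ref{Consistency}) to pass to $\mathcal{BL}_{G_S}^{d-1}$, whereas you verify the projected lex comparison directly by the $k\neq i$ observation; also, your closing appeal to Lemmas~\ref{downIsBetter}, \ref{compProcess}, \ref{Consistency} is unnecessary, since strong compression of $A$ is already a hypothesis of the lemma.
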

\begin{proof}
Let $\Start_G(B_2)=(s_1,\dots, s_d)$ and $x=(x_1,\dots,x_i,\dots,x_d)\in B_1$.
Then we have
\begin{align*}
	y=(s_1,\dots, s_{i-1}, x_i, s_{i+1},\dots, s_d)\in \Bone_G(B_2,i).
\end{align*}
Since $x$ and $y$ match in the $i$-th coordinate and $x<_{\mathcal{BL}_G^d} y$ we
conclude $x\in A$. This implies $B_1\subseteq A$.
\end{proof}

\begin{corollary}\label{skelImpliesEmpty}
Let $A\subseteq V_G$ be strongly compressed and blocks 
$B_1 <_{\mathcal{BL}_G^d} B_2$ share a bone. Let $\Stack_{G}(\alpha)$ be a stack
containing $B_2$ and $B_2 <_{\mathcal{BL}_G^d} B_3$ for some block
$B_3\subseteq \Stack_G(\alpha)$.
If $B_1 \not \subseteq A$ and $B_2\cap A \neq \emptyset$ then 
$B_3\cap A =\emptyset$.
\end{corollary}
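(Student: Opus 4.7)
The plan is to argue by contradiction: suppose there exists a block $B_3 \subseteq \Stack_G(\alpha)$ with $B_2 <_{\mathcal{BL}_G^d} B_3$ and $B_3 \cap A \neq \emptyset$, and derive $B_1 \subseteq A$, contradicting the hypothesis. Let $i \in \{1,\dots,d\}$ denote the index of the bone shared by $B_1$ and $B_2$, and let $k$ be the direction of the stack $\Stack_G(\alpha)$. My target is to show $\Bone_G(B_2, i) \subseteq A$, after which Lemma \ref{sharedSegments} applied to the pair $B_1 <_{\mathcal{BL}_G^d} B_2$ immediately yields $B_1 \subseteq A$.

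First I would extract $\Start_G(B_3) \in A$ from $B_3 \cap A \neq \emptyset$ using strong compression, just as in the opening line of the proof of Lemma \ref{nonEmptyImpliesSkeleton}. Next, fix any $y \in \Bone_G(B_2, i)$ and write $\Start_G(B_2) = (s_1,\dots,s_d)$ and $\Start_G(B_3) = (t_1,\dots,t_d)$. Since $B_2, B_3$ lie in a common stack in direction $k$, we have $t_j = s_j$ for all $j \neq k$; and since $y$ lies on the $i$-th bone of $B_2$, we have $y_j = s_j$ for all $j \neq i$. Hence $y$ and $\Start_G(B_3)$ agree on every coordinate outside $\{i,k\}$, and because $d\geq 3$ this set $\{i,k\}$ is a proper subset of $\{1,\dots,d\}$ regardless of whether $i$ equals $k$.

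On the other hand, $y\in B_2$ and $B_2 <_{\mathcal{BL}_G^d} B_3$ imply $y <_{\mathcal{BL}_G^d} \Start_G(B_3)$ by the definition of $\mathcal{BL}_G^d$ on vertices in distinct blocks. By Lemma \ref{Consistency} this comparison descends to the induced block-lexicographic order on the subproduct indexed by $\{i,k\}$, so the projections of $y$ and $\Start_G(B_3)$ lie in a common section and the projection of $y$ is smaller there. Strong compression of $A$ with respect to the proper subset $\{i,k\}$ then forces $y\in A$. Since $y$ was an arbitrary element of $\Bone_G(B_2, i)$, we get $\Bone_G(B_2, i)\subseteq A$, and Lemma \ref{sharedSegments} closes the argument.

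The main obstacle is guaranteeing that the pair $y, \Start_G(B_3)$ sits in a common section of some \emph{proper} subproduct, whether or not the shared-bone index $i$ coincides with the stack direction $k$. This is resolved uniformly by observing that the only coordinates on which $y$ and $\Start_G(B_3)$ can disagree are $i$ and $k$, so the set $\{i,k\}$ of differing indices has size at most $2$, which is proper in $\{1,\dots,d\}$ precisely under the hypothesis $d\geq 3$. Apart from this observation, the argument avoids any case analysis on the structure of the individual blocks.
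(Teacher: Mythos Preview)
Your proof is correct and takes essentially the same approach as the paper: argue by contradiction, show the shared $i$-th bone of $B_2$ lies in $A$ via strong compression, and then invoke Lemma~\ref{sharedSegments}. The paper obtains the bone inclusion by citing Lemma~\ref{nonEmptyImpliesSkeleton} (whose proof is precisely your coordinate-matching argument), whereas you re-derive it inline for just the needed bone and make the consistency step explicit via Lemma~\ref{Consistency}.
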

\begin{proof}
For the contrary, assume $B_3\cap A \neq \emptyset$.
Then $\Skeleton_G(B_2)\subseteq A$, by Lemma \ref{nonEmptyImpliesSkeleton}.
Since $B_1$ and $B_2$ share a bone, Lemma \ref{sharedSegments} implies 
$B_1\subseteq A$, which is a contradiction.
\end{proof}

For a slice $\Slice_G(q)$ and blocks $B_1 <_{\mathcal{BL}_G^d} B_2$ in it we say
$B_1$ and $B_2$ are \textit{consecutive} in slice $\Slice_G(q)$
if there is no block $B_3$ in $\Slice_G(q)$ with 
$B_1 <_{\mathcal{BL}_G^d} B_3 <_{\mathcal{BL}_G^d} B_2$.

\begin{lemma}\label{cubesInSliceConsecutive}
Let blocks $B_1 <_{\mathcal{BL}_G^d} B_2$ be in slice $\Slice_G(q)$.
If $B_1\not\subseteq A$ and $A\cap B_2 \neq \emptyset$,
then $B_1$ and $B_2$ are consecutive in $\Slice_G(q)$.
\end{lemma}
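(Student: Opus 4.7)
Assume for contradiction that there is a block $B_3 \in \Slice_G(q)$ with $B_1 <_{\mathcal{BL}_G^d} B_3 <_{\mathcal{BL}_G^d} B_2$; the plan is to derive $B_1 \subseteq A$, contradicting the hypothesis.

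Because $B_2$ and $B_3$ share the first bone (they are in the same slice), their starts agree in the first coordinate: write $\Start_G(B_2)=(c_1,c_2,\ldots,c_d)$ and $\Start_G(B_3)=(c_1,b_2,\ldots,b_d)$, and let $I=\{i_1<\cdots<i_m\}\subseteq\{2,\ldots,d\}$ collect the directions in which these starts differ. I build a chain $B^{(0)}=B_2,\ B^{(1)},\ldots,B^{(m)}=B_3$ by letting $B^{(r)}$ be obtained from $B^{(r-1)}$ by replacing the $i_r$-th coordinate of the start with $b_{i_r}$. Each consecutive pair $B^{(r)},B^{(r-1)}$ agrees in every direction except $i_r$, so it sits in a common stack in direction $i_r$ with $B^{(r)}<_{\mathcal{BL}_G^d} B^{(r-1)}$.

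Next, I propagate the condition ``intersects $A$'' down this chain. Starting from $B^{(0)}=B_2$ with $B_2\cap A\neq\emptyset$, I apply Lemma~\ref{nonEmptyImpliesSkeleton} inside each stack. The lemma as stated only gives $\Skeleton_G\subseteq A$ for the block immediately preceding in the stack, but since $\Skeleton_G(B)\subseteq A$ implies $B\cap A\neq\emptyset$ (the start lies in the skeleton), a short downward induction inside the stack propagates the conclusion all the way to $B^{(r)}$, even though $B^{(r)}$ need not be adjacent to $B^{(r-1)}$. Doing this in each of the $m$ stacks in turn yields $\Skeleton_G(B^{(r)})\subseteq A$ for every $r\geq 1$, and in particular $\Bone_G(B_3,1)\subseteq A$.

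Finally, $B_1$ and $B_3$ lie in the same slice, so they share the first bone. Lemma~\ref{sharedSegments} applied to $B_1<_{\mathcal{BL}_G^d} B_3$ with $\Bone_G(B_3,1)\subseteq A$ then forces $B_1\subseteq A$, the desired contradiction. The only delicate point is the inner downward induction inside each stack that bridges the gap between ``immediate predecessor'' in Lemma~\ref{nonEmptyImpliesSkeleton} and the possibly non-adjacent blocks $B^{(r)},B^{(r-1)}$; once that is handled, the chain construction and the invocation of Lemma~\ref{sharedSegments} are mechanical.
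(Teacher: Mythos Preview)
Your chain argument has a real gap: the claim that $B^{(r)}<_{\mathcal{BL}_G^d} B^{(r-1)}$ for every $r$ is not justified, and in fact fails. Knowing $B_3<_{\mathcal{BL}_G^d}B_2$ only tells you that at the \emph{first} index $i_1$ where the starts differ one has $b_{i_1}<c_{i_1}$; for $r\geq 2$ the coordinates $b_{i_r}$ and $c_{i_r}$ can be in either order. Concretely, with $d=3$ and starts $\Start_G(B_2)=(c_1,3,1)$, $\Start_G(B_3)=(c_1,2,3)$ (so $B_3<B_2$), your chain gives $B^{(1)}$ with start $(c_1,2,1)$ and $B^{(2)}=B_3$ with start $(c_1,2,3)$, and here $B^{(2)}>_{\mathcal{BL}_G^d}B^{(1)}$. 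At that step Lemma~\ref{nonEmptyImpliesSkeleton} no longer applies in the direction you need, and the propagation of ``$\Skeleton\subseteq A$'' breaks down. So the argument does not reach $\Bone_G(B_3,1)\subseteq A$.

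The paper avoids this pitfall by not trying to walk coordinatewise from $B_2$ to $B_3$. Instead it works with the stacks in direction $d$ inside the slice: it first pins $B_1$ down as the \emph{last} block of its $d$-stack and $B_2$ as the \emph{first} block of its $d$-stack (using Lemmas~\ref{nonEmptyImpliesSkeleton} and~\ref{sharedSegments} together with the fact that blocks in the same $d$-stack share bones $1,\ldots,d-1$ and blocks in the same slice share bone $1$), and then rules out any intermediate $d$-stack by the same bone-sharing reasoning. Each application of Lemma~\ref{sharedSegments} there uses a bone whose index is guaranteed by the stack/slice structure, so no unjustified monotonicity is needed. If you want to salvage your approach, you would need a chain from $B_2$ down to some block $B'\leq_{\mathcal{BL}_G^d}B_1$ in the slice that is \emph{provably} monotone; the naive coordinate-by-coordinate replacement does not give that.
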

\begin{proof}
Let $\Stack_G(\alpha)$ and $\Stack_G(\beta)$ be stacks of $\Slice_G(q)$ in
direction $d$ that contain $B_1$ and $B_2$, respectively.
If $\Stack_G(\alpha) = \Stack_G(\beta)$ then the statement follows from
lemmas \ref{nonEmptyImpliesSkeleton} and \ref{sharedSegments}.
So assume $\Stack_G(\alpha) \neq \Stack_G(\beta)$.

Let $B_1'$ be the last block of $\Stack_G(\alpha)$ and $B_2'$ be the first 
block of $\Stack_G(\beta)$. We show that $B_1'=B_1$ and $B_2'=B_2$.
Indeed, if $B_2'\neq B_2$ then since $\Skeleton_G(B_2')\subseteq A$ by 
Lemma \ref{nonEmptyImpliesSkeleton}, we get $B_1\subseteq A$ by 
Lemma \ref{sharedSegments}, a contradiction.
Also, if $B_1'\neq B_1$ then by the definition of slices, $B_2$ and $B_1'$ 
share the first bone. 
Since $B_2\cap A \neq \emptyset$ and $A$ is strongly compressed,
for all $i\in \{2,\dots, d\}$ we have $\Bone_G(B_1',i)\subseteq A$.
Thus, $B_1\subseteq A$ by Lemma \ref{sharedSegments},
since $d\geq 3$ and $B_1$ and $B_1'$ share the $j$-th bone for all 
$j\in \{1,\dots, d-1\}$. This implies $B_1\subseteq A$, a contradiction.

It remains to show is that there is no stack $\Stack_G(\gamma)$
with $\Stack_G(\alpha)<_{\mathcal{BL}_G^d}\Stack_G(\gamma)<_{\mathcal{BL}_G^d}
\Stack_G(\beta)$. Assume for the contrary that this is not the case and let
$B_3$ be the last block in $\Stack_G(\gamma)$.
We get $\Bone_G(B_3,j)\subseteq A$ for all $j\in \{2,\dots, d\}$,
since $A$ is strongly compressed and $B_3$ and $B_2$ share the first bone.
If $B_3$ is the only block in $\Stack_G(\gamma)$, then $B_1$ and $B_3$ 
share the $d$-th bone. Hence, $B_1\subseteq A$ by Lemma \ref{sharedSegments},
a contradiction.
So, suppose that there is another block $B_4\subseteq \Stack_G(\gamma)$.
Then $B_3$ and $B_4$ share the $j$-th bone for all $j\in \{1,\dots, d-1\}$.
Lemma \ref{sharedSegments} implies $B_4\subseteq A$, since $d\geq 3$.
However, Lemma \ref{sharedSegments} implies $B_1\subseteq A$,
since $B_1$ and $B_4$ share the first bone. The obtained contradiction
completes the proof.
\end{proof}

The above lemmas are used as a basis for establishing the next results by using
the pull-push method.

\begin{theorem}\label{sliceCompression}
For any strongly compressed set $A\subseteq V_G$ there exist 
a strongly compressed and slice-compressed set $B\subseteq V_G$ such that
$|A|=|B|$ and $|I_G(A)|\leq |I_G(B)|$.
\end{theorem}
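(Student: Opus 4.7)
The plan is to start from $A$ already strongly compressed --- which costs nothing, since Lemma~\ref{Consistency} combined with Lemmas~\ref{downIsBetter} and~\ref{compProcess} allows us to iterate $S$-compression along every proper $S\subset\{1,\dots,d\}$ until the set stabilizes without decreasing $|I_G(A)|$ --- and then to eliminate violations of slice-compression by a sequence of local pull-push moves that preserve $|A|$ and do not decrease $|I_G(A)|$. Progress is tracked by a defect counter that vanishes exactly when $A$ is slice-compressed, for example the number of pairs $(q,i)$ with $i<r_q$ and $B_i\not\subseteq A$, where $r_q$ is the largest index of a block of $\Slice_G(q)$ meeting $A$.

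Suppose $A$ has positive defect. Then some slice $\Slice_G(q)$ contains blocks $B_i$ and $B_{r_q}$ with $i<r_q$ and $B_i\not\subseteq A$, and Lemma~\ref{cubesInSliceConsecutive} forces $B_i$ and $B_{r_q}$ to be consecutive in $\Slice_G(q)$, so all blocks strictly before $B_i$ in this slice are full and no block sits strictly between them. The pull-push move now pulls vertices out of $A\cap B_{r_q}$ in decreasing order of $\mathcal{D}_{B_{r_q}}$ and pushes them into $B_i\setminus A$ in increasing order of $\mathcal{D}_{B_i}$, iterating until either $B_i\subseteq A$ or $B_{r_q}\cap A=\emptyset$. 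In either outcome the defect strictly drops; I then reapply strong compression (terminating by Lemma~\ref{compProcess}) and repeat, eventually reaching a strongly compressed and slice-compressed set $B$.

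The non-trivial step is showing that each pull-push move does not decrease $|I_G(A)|$, and I plan to handle this by splitting into two sub-cases. In the same-stack case, $B_i$ and $B_{r_q}$ lie in a common stack in some direction $j\in\{2,\dots,d\}$ and differ only in the $j$-th coordinate, so the move is essentially two-dimensional in the $(G_1,G_j)$-plane and the hypothesis that $\mathcal{BL}^2_{G_1\square G_j}$ is optimal applies directly (in combination with the inductive hypothesis in dimension $d-1$). In the different-stack case, Lemmas~\ref{nonEmptyImpliesSkeleton} and~\ref{sharedSegments} force $\Skeleton_G(B_i)\subseteq A$ and the whole stack containing $B_i$ to be filled up to $B_{r_q}$, so both the pre- and post-move sets are compressed on the relevant sub-region; Lemma~\ref{weightCompressed} and Corollary~\ref{posetTrick} then reduce the edge-count change to a weight comparison between pulled and pushed vertices, which the regularity of the domination collection together with the non-decreasing property of $\mathfrak{P}_{G_1},\dots,\mathfrak{P}_{G_{d-1}}$ are tailored to settle in our favor. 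The main obstacle is precisely this different-stack case: the two blocks can sit geometrically far apart in the product, and the weight comparison is delicate enough that it must simultaneously lean on the regularity and monotonicity hypotheses in order to close.
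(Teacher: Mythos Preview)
Your description of pull-push is not the paper's pull-push, and the gap is real. You propose to remove vertices from $A\cap B_{r_q}$ in decreasing $\mathcal{D}_{B_{r_q}}$-order and insert them into $B_i\setminus A$ in increasing $\mathcal{D}_{B_i}$-order. But the last vertex of $A\cap B_{r_q}$ in a domination (i.e.\ permuted lexicographic) order need not be coordinatewise maximal in $A\cap B_{r_q}$, so $A\setminus\{v\}$ is not obviously compressed; and nothing in your outline explains why the weight of the pushed vertex dominates the weight of the pulled one. Your same-stack/different-stack split does not rescue this: if $B_i,B_{r_q}$ share a stack in direction $j$ they agree in every factor except the $j$-th, so the move is not ``two-dimensional in the $(G_1,G_j)$-plane'' in any way that lets the hypothesis on $\mathcal{BL}^2_{G_1\square G_j}$ bite. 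And you invoke regularity of the domination collection, which is not used at all in this theorem---it is reserved for Theorem~\ref{last_of_final_proof}.

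What the paper actually does is work along the \emph{first} coordinate axis $Z_1$, which all blocks of the slice share. With $S=\{2,\dots,d\}$ it locates levels $y<_{\mathcal{O}_{G_1}}x$ in $Z_1$ (the last level at which $B_r$ still meets $A$, and the first level at which $B_{r-1}$ is not yet full). The \emph{pull} is indirect: one manufactures an auxiliary compressed set $A'$ by discarding all later slices and the portion of $B_{r-1}$ at levels $\ge y$, replacing the $y$-layer of $B_{r-1}$ by the projection of its $x$-layer, and then applies $\Comp_{G,\mathcal{O}_{G_S}}$ to $A'$ (this is where the $(d-1)$-dimensional induction hypothesis enters). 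By Corollary~\ref{posetTrick} this yields $\omega_G(T_{r-1})\ge\omega_G(T_r)$, where $T_r$ and $T_{r-1}$ are the vertices actually moved inside the single section $G_S(y)$. The \emph{push} then returns to the original $A$: set $D=(A\setminus T_r)\cup T_x$, where $T_x$ is $T_{r-1}$ shifted from level $y$ to level $x$. Compressedness of $D$ follows from Corollary~\ref{skelImpliesEmpty} and the choice of $x$, and
\[
|I_G(D)|-|I_G(A)|\;\ge\;\omega_G(T_x)-\omega_G(T_{r-1})\;=\;a\bigl(\Delta_{G_1}(x)-\Delta_{G_1}(y)\bigr)\;\ge\;0
\]
by Lemma~\ref{deltaInPartition} and the non-decreasing property of $\mathfrak{P}_{G_1}$ alone. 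There is no case split on stacks and no appeal to regularity. The point you are missing is that ``pull'' and ``push'' are two different transformations on two different sets: the pull is performed on an auxiliary $A'$ solely to harvest a weight inequality, and the push then spends that inequality on $A$ itself.
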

\begin{remark}
The proof of Theorem \ref{sliceCompression} has three steps.
First, we use Lemma \ref{cubesInSliceConsecutive} to consider two consecutive cubes.
Then we are going to move vertices to the earlier block from the later one.
This is done in two steps, we first do a pull and then a push.
The pull introduces a new set $A'$ that is compressed similar to $A$, but not strongly compressed.
We then use compression on $A'$ to pull vertices to the earlier block from the later one and obtain a new set $D'$.
The push deals with how we transfer information gained from the pull on $A'$ to the set $A$.
The push compares the pulled vertices with corresponding vertices that come later in the block lexicographic order.

Properties of strong compression imply that the number of induced edges by a set obtained after pulling cannot decrease. 
Similarly, the non-decreasing property of isoperimetric partitions of graphs in the product guarantee that the pushing operation also does not decrease the number of induced edges. 
This way, applying both operations to an optimal set results in another optimal set satisfying some structural properties which make it looking closer to an initial segment of the block lexicographic order.
\end{remark}

We use the pull-push method three more times in Theorem \ref{last_of_final_proof}. 

\begin{proof}
Let $B_1<_{\mathcal{BL}_G^d}\dots<_{\mathcal{BL}_G^d} B_{p}$ be the blocks of some
slice $\Slice_G(q)$ and let $r$ be the largest index for which 
$A\cap B_{r} \neq \emptyset$. 
Lemma \ref{cubesInSliceConsecutive} implies $B_1,\dots, B_{r-2}\subseteq A$.
Omitting trivial cases we assume $r\geq 2$ and $B_{r-1}\not\subseteq A$.
We will pump vertices from $B_{r}$ to $B_{r-1}$ by using the pull-push method.
	
Let $S=\{2,\dots, d\}$ and $B_{r}=Z_1\times \cdots \times Z_d$. Note 
that $Z_1$ is the partition of $G_1$ that all blocks in $\Slice_G(q)$ share.
Denote by $x\in Z_1$ the first vertex in the order $\mathcal{O}_{G_1}$,
such that $V_{G_S(x)}\cap B_{r-1}\not\subseteq A$ and by $y\in Z_1$ the last
vertex in the order $\mathcal{O}_{G_1}$ such that $V_{G_S(y)}\cap 
B_{r}\cap A\neq \emptyset$. 
Therefore, for all $v\in Z_1$ with $v<_{\mathcal{O}_{G_1}} x$ we have 
$V_{G_S(v)}\cap B_{r-1}\subseteq A$ and for all $v\in Z_1$ with 
$v>_{\mathcal{O}_{G_1}} y$ we have $V_{G_S(v)}\cap B_{r}\cap A = \emptyset$.
Note that $x$ is defined with respect to $B_{r-1}$ and $y$ with respect 
to $B_{r}$. Also note that $x >_{\mathcal{O}_{G_1}} y$, since otherwise
$V_{G_S(x)}\cap B_{r-1}\subseteq A$ because $A$ is strongly compressed.
We are going to pump vertices from $V_{G_S(y)}\cap B_{r}$ to 
$G_S(x)\cap B_{r-1}$ in the two following steps.
	
{\it Pull:}
Let $W$ be the projection of $V_{G_S(x)} \cap B_{r-1}\cap A$
to $V_{G_S(y)} \cap B_{r-1}$, that is
\begin{align*}
	W &= \{(y, v_2,\dots, v_d) \bigm | 
	(x,v_2\dots, v_d)\in  G_S(x) \cap B_{r-1}\cap A  \}.
\end{align*}
Denote by $E$ the set of all $d-1$ dimensional blocks in $B_{r-1}$ such that
\begin{align*}
	E &= \bigcup_{\substack{v\in Z_1 \\ v \geq_{\mathcal{O}_{G_1}} y}} 
	V_{G_S(v)}\cap B_{r-1},
\end{align*}
and denote by $R$ the set of all slices greater than $\Slice_G(q)$, that is
\begin{align*}
	R &= \bigcup_{t>q} \Slice_G(t).
\end{align*}
Consider the set
\begin{align*}
	A' = (A\setminus(E\cup R)) \cup W.
\end{align*}
This set has the following properties:
\begin{enumerate}
\item $A'$ is compressed. 
To prove this we show that if $v=(v_1,\dots, v_d)\in A'$ and 
$u=(u_1,\dots, u_d)\in V_G$ with 
$u_1\leq_{\mathcal{O}_{G_1}} v_1,\dots, u_d\leq_{\mathcal{O}_{G_d}} v_d$,
then $u\in A'$.
First, note that $B_{r}$ and $B_{r-1}$
are the last blocks that have a nonempty intersection with $A'$,
since we removed $R$ from $A$ to get $A'$.
If $v\not \in B_{r}$ or $v\not \in B_{r-1}$ then $u\in A'$,
since $A$ is strongly compressed and we only modified $B_{r-1}$ to get $A'$.
If $v\in B_{r-1}$ then $u\in A'$, since $W\subseteq G_S(y)\cap B_{r-1}\cap A$.
So, it remains to consider the case $v\in B_{r}$ and $u\in B_{r-1}$.
Actually, it is sufficient to consider only the case when 
$B_{r}$ and $B_{r-1}$ are in the same stack,
since otherwise there is some $i\in \{1,\dots, d\}$
for which $u_i >_{\mathcal{O}_{G_i}} v_i$.
So, suppose that $B_{r}$ and $B_{r-1}$ are in the same stack in direction $i$.
Note that $i\neq 1$ because $B_{r}$ and $B_{r-1}$ are in the same slice.
Without loss of generality, assume that
$u_1=v_1,\ \dots,\  u_{i-1}=v_{i-1},\ u_{i+1}=v_{i+1},\ \dots,\ u_d=v_d$.
Since $A$ is strongly compressed, we get $u\in A$.
If $u_1<_{\mathcal{O}_{G_1}} y$ then $u\in A'$, by the definition of $A'$ 
because $u\not \in E$.
If $u_1=y$ then $(x,u_2,\dots, u_d)\in A$ because $A$ is strongly 
compressed and $d\geq 3$. Therefore, in all cases $u\in A'$ by the 
definition of $W$ and $A'$, implying $A'$ is compressed.
\item The set $A'\cap V_{G_S(y)}\cap B_{r-1}$ forms an initial segment of 
the domination order in block $V_{G_S(y)}\cap B_{r-1}$.
\item The set $A'\cap V_{G_S(y)}\cap B_{r}$ forms an initial segment of 
the domination order in block $V_{G_S(y)}\cap B_{r}$.
\item For all $l<r-1$ it holds that $B_l\subseteq A'$.
\item For all $l>r$ it holds that $B_l\cap A' =\emptyset$.
\end{enumerate}

Denote $D'=\Comp_{G,\mathcal{O}_{G_S}}(A')$. The set $D'$ is obtained from $A'$
by moving $n$ vertices from $B_{r}$ to $B_{r-1}$, where
\begin{align*}
	a = \min\{|A'\cap (V_{G_S(y)}\cap B_{r})|, 
	|(V_{G_S(y)}\cap B_{r-1})\setminus (A'\cap G_S(y)\cap B_{r-1})| \}
\end{align*}
Note that $D'$ is compressed, since $A'$ is compressed. Denote by $T_r$ the
set of the last $a$ vertices of $A'\cap V_{G_S(y)}\cap B_{r}$ in the 
domination order on $V_{G_S(y)}\cap B_{r}$, and denote by $T_{r-1}$ the
set of the first $a$ vertices of $(V_{G_S(y)}\cap B_{r-1})\setminus A'$ in the domination 
order on $V_{G_S(y)}\cap B_{r-1}$. In these terms, $D' = (A'\setminus
T_r)\cup T_{r-1}$.
Taking into account Corollary \ref{posetTrick} one has
\begin{align*}
	0 \leq |I_G(D')|-|I_G(A')| = \omega_G(T_{r-1})-\omega_G(T_{r}).
\end{align*}

{\it Push:}
Denote 
\begin{align*}
	T_{x} &= \{(x,v_2,\dots, v_d) \bigm | (y,v_2,\dots, v_d)\in T_{r-1}\}\\
	D &= (A\setminus T_r)\cup T_{x}.
\end{align*}
Note that block $B_{r}$, as well as any other block, belongs to $d$ stacks 
of $G$ (in different directions).
If there is a block $B$ in one of those stacks such that 
$B_{r}<_{\mathcal{BL}_G^d} B$ then $B\cap A = \emptyset$ by 
Corollary \ref{skelImpliesEmpty} because $B_{r-1}$ and $B_r$ share a bone.
Therefore, $A\setminus T_r$ is compressed. By definition of $x$ we get that 
$D$ is also compressed.
	
Now, Corollary \ref{posetTrick}, Lemma \ref{deltaInPartition}, and 
the non-decreasing property of $\mathfrak{P}_{G_1}$ imply
\begin{align*}
  |I_G(D)|-|I_G(A)| &= \omega_G(T_x)-\omega_G(T_{r})\\
  &=\omega_G(T_x) -\omega_G(T_{r-1}) + \omega_G(T_{r-1})-\omega_G(T_{r})\\
  &\geq a(\Delta_{G_1}(x)-\Delta_{G_1}(y)) +0\\
  &=a(\Delta_{(Z_1, I_{G_1}(Z_1))}(x) - \Delta_{(Z_1, I_{G_1}(Z_1))}(y))\\
  &\geq 0.
\end{align*}
Apply to $D$ the strong compression operation and denote by $F$ the resulting
set. One has $|F|=|D|$ and $|I_G(F)|\geq |I_G(D)|$.
Applying the described transformation over and over we obtain a stable set
$B$ of the same size, which is strongly compressed and slice compressed, 
and for which $|I_G(A)|\leq |I_G(B)|$.
\end{proof}

We now apply a similar approach for reducing the problem of constructing 
optimal sets to block compressed sets.

\begin{lemma}\label{lineImpliesSlice}
Let $A\subseteq V_G$ be a strongly compressed set and let $(s_1,\dots, s_d)$ 
be the start of the first block of some slice $\Slice_G(q)$.
If for $i\in \{2,\dots, d\}$ it holds
\begin{align*}
	\{s_1\}\times \cdots \times \{s_{i-1}\}\times V_{G_i}\times \{s_{i+1}\}
	\times \cdots \times \{s_d\} \subseteq A,
\end{align*}
then $\Slice_G(p)\subseteq A$ for all $p< q$.
\end{lemma}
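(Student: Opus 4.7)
The plan is to reduce the statement to a single application of strong compression in the $(d-1)$-dimensional subproduct $G_S$ with $S=\{1,\dots,d\}\setminus\{i\}$. Because $i\in\{2,\dots,d\}$, $S$ is a proper subset of $\{1,\dots,d\}$, so strong compression forces $A\cap V_{G_S(v_i)}$ to be an initial segment of $\mathcal{BL}_{G_S}^{d-1}$ for every $v_i\in V_{G_i}$.

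The central observation I intend to make is that the point $z=(s_1,s_2,\dots,s_{i-1},s_{i+1},\dots,s_d)\in V_{G_S}$ obtained by dropping the $i$-th coordinate of the given start is itself the start of its block in $G_S$. This is because $(s_1,\dots,s_d)$ is the start of the \emph{first} block of $\Slice_G(q)$, which forces each $s_j$ with $j\neq 1$ to be the first vertex of the first partition of $G_j$, while $s_1$ is the start of the partition of $\mathfrak{P}_{G_1}$ corresponding to slice $q$. Thus every coordinate of $z$ is the start of its own partition.

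Next, I would invoke the hypothesis $L_i\subseteq A$ fiber by fiber: for each $v_i\in V_{G_i}$ the full vertex $(s_1,\dots,s_{i-1},v_i,s_{i+1},\dots,s_d)$ lies in $A$, so its projection $z$ lies in $A\cap V_{G_S(v_i)}$. Strong compression then forces an initial segment of $\mathcal{BL}_{G_S}^{d-1}$ reaching at least as far as $z$ to be contained in $A\cap V_{G_S(v_i)}$. Since $z$ is the start of its block, any vertex strictly preceding $z$ in $\mathcal{BL}_{G_S}^{d-1}$ must lie in a strictly earlier block; a lex comparison of block starts then identifies the strictly earlier blocks as exactly those whose first coordinate of the start lies in a partition of $\mathfrak{P}_{G_1}$ preceding the slice-$q$ partition, with all remaining coordinates free (equality in the first coordinate would force the remaining coordinates to be lex-smaller than the already minimal $s_j$, which is impossible).

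The union of these earlier blocks, viewed in $G_S$, is exactly $\bigl(\bigcup_{p<q}\mathcal{O}_{G_1}[a_{1,p},b_{1,p}]\bigr)\times V_{G_2}\times\cdots\times V_{G_{i-1}}\times V_{G_{i+1}}\times\cdots\times V_{G_d}$, and it must sit inside $A\cap V_{G_S(v_i)}$. Since this containment is uniform in $v_i\in V_{G_i}$, reassembling the fibers back in $V_G$ yields $\Slice_G(p)\subseteq A$ for every $p<q$, which is the conclusion. The one point that needs careful verification is the claim that $z$ is really the start of its block in $G_S$ (i.e., no smaller candidate exists); after that the proof collapses to a single compression step. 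I deliberately compress in the full complement of $\{i\}$ in one shot, rather than iterating compressions in $2$-dimensional subproducts, so that the unknown domination orders on intermediate blocks never enter the argument.
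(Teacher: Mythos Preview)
Your proposal is correct and follows essentially the same approach as the paper: both use strong compression with respect to $S=\{1,\dots,d\}\setminus\{i\}$, noting that any $x\in\Slice_G(p)$ shares its $i$-th coordinate with a point $(s_1,\dots,s_{i-1},x_i,s_{i+1},\dots,s_d)$ of the given line, and that $x_1<_{\mathcal{O}_{G_1}}s_1$ forces $x$ to precede that point in $\mathcal{BL}_{G_S}^{d-1}$. The paper compresses this into three lines without spelling out why $z$ is a block start or why the lex comparison goes through, whereas you make those steps explicit; but the underlying argument is identical.
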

\begin{proof}
Indeed, if $x=(x_1,\dots,x_i,\dots, x_d)\in \Slice_G(p)$ then
$(s_1,\dots, s_{i-1},x_i,s_{i+1},\dots, s_d)\in A$ and 
$x_1 <_{\mathcal{O}_{G_1}} s_1$. 
Hence, $x\in A$, since $A$ is strongly compressed.
\end{proof}

\begin{lemma}\label{consecutiveSlices}
Let $A\subseteq V_G$ be strongly compressed and slice-compressed and
slices $\Slice_G (p) <_{\mathcal{BL}^d_G} \Slice_G (q)$ be such that 
$\Slice_G(q)\cap A \neq \emptyset$ and $\Slice_G(p)\not\subseteq A$.
If $\Stack_G(\alpha_q)$ and $\Stack_G(\alpha_p)$ are the first and last 
stacks of $\Slice_G(q)$ and $\Slice_G(p)$, respectively, then
\begin{enumerate}
\item For any stack $\Stack_G(\beta) \subseteq \Slice_G(q)$ 
different from $\Stack_G(\alpha_q)$ it holds
$\Stack_G(\beta)\cap A = \emptyset$.
\item For any stack $\Stack_G(\beta) \subseteq \Slice_G(p)$ 
different from $\Stack_G(\alpha_p)$, it holds 
$\Stack_G(\beta)\subseteq A$.
\item The slices $\Slice_G (p)$ and $\Slice_G (q)$ are consecutive,
that is, $q=p+1$.
\end{enumerate}
\end{lemma}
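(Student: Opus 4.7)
For part 1, I argue by contradiction. Supposing some stack $\Stack_G(\beta)\subseteq\Slice_G(q)$ with $\beta\neq\alpha_q$ satisfies $\Stack_G(\beta)\cap A\neq\emptyset$, every block of $\Stack_G(\alpha_q)$ precedes the witnessing $A$-nonempty block of $\Stack_G(\beta)$ in the $\mathcal{BL}_G^d$-order within $\Slice_G(q)$, so slice-compression forces $\Stack_G(\alpha_q)\subseteq A$. The direction-$d$ line $\{s_1^q\}\times\{s_2^{(1)}\}\times\cdots\times\{s_{d-1}^{(1)}\}\times V_{G_d}$ through the start of the first block of $\Slice_G(q)$ lies inside $\Stack_G(\alpha_q)$ and hence in $A$, so Lemma \ref{lineImpliesSlice} with $i=d$ (valid since $d\geq 3$) yields $\Slice_G(k)\subseteq A$ for every $k<q$, contradicting $\Slice_G(p)\not\subseteq A$.

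For part 2, suppose some non-last stack $\Stack_G(\beta)\subseteq\Slice_G(p)$ satisfies $\Stack_G(\beta)\not\subseteq A$ and let $B^*$ be the last $\mathcal{BL}_G^d$-nonempty block of $\Slice_G(p)$. If $B^*\in\Stack_G(\alpha_p)$, slice-compression fills every stack preceding $\alpha_p$, in particular $\beta$, a contradiction. Otherwise $B^*$ lies in some stack $\gamma\neq\alpha_p$ with $\gamma<_{\mathcal{BL}_G^d}\alpha_p$, and slice-compression forces $\Stack_G(\alpha_p)\cap A=\emptyset$; this is the main obstacle of the proof. My plan is to derive a contradiction by pairing $\Stack_G(\alpha_p)$ with the stack of $\Slice_G(q)$ sharing the same direction-$\{2,\ldots,d-1\}$ starts, then combining Part 1 with strong compression in direction $1$ and the bone-sharing lemmas (Lemmas \ref{nonEmptyImpliesSkeleton} and \ref{sharedSegments}) to propagate the $A$-emptiness of $\Stack_G(\alpha_p)$ through shared bones until it conflicts either with $\Slice_G(q)\cap A\neq\emptyset$ or with $\Slice_G(p)\not\subseteq A$.

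For part 3, assume $q\geq p+2$. Strong compression in direction $1$ applied to any $v\in\Slice_G(q)\cap A$ produces a vertex in $\Slice_G(p+1)\cap A$, and the analogous shift from $\Slice_G(p+1)$ to $\Slice_G(p)$ rules out $\Slice_G(p+1)\subseteq A$. Consequently the hypotheses of the lemma hold for both $(p,p+1)$ and $(p+1,q)$, so Parts 1 and 2 apply: Part 1 confines $A\cap\Slice_G(p+1)$ to the first stack of $\Slice_G(p+1)$, while Part 2 forces every non-last stack of $\Slice_G(p+1)$ to lie fully in $A$. When $\Slice_G(p+1)$ has at least two stacks, these two statements together place the first stack of $\Slice_G(p+1)$ fully in $A$, whence the direction-$d$ line through the start of its first block lies in $A$ and Lemma \ref{lineImpliesSlice} delivers $\Slice_G(p)\subseteq A$, a contradiction. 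The remaining degenerate case, where every slice of $G$ has a single stack, is handled by observing that strong compression in $S=\{1,d\}$ forces each middle cross-section of $A$ to be an initial segment of $\mathcal{BL}_{G_{\{1,d\}}}^2$, which rules out the simultaneous occurrence of $\Slice_G(p)\not\subseteq A$ and $\Slice_G(q)\cap A\neq\emptyset$ for $q\geq p+2$.
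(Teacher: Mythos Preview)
Your Part 1 is correct and matches the paper's argument exactly.

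Your Part 2, however, has a genuine gap. You correctly reduce to the case $\Stack_G(\alpha_p)\cap A=\emptyset$, but then you only announce a ``plan'' involving bone-sharing lemmas without carrying it out. This plan is not a proof, and it is also the wrong direction: the case $\Stack_G(\alpha_p)\cap A=\emptyset$ is impossible, and the paper rules it out in one stroke of strong compression with the $(d-1)$-dimensional index set $S=\{1,\dots,d-1\}$ rather than with $S=\{1\}$. Concretely, since $\Slice_G(q)\cap A\neq\emptyset$, the start $(s_1^q,s_2^{(1)},\dots,s_d^{(1)})$ of the first block of $\Slice_G(q)$ lies in $A$. The start of the first block of $\Stack_G(\alpha_p)$ is $(s_1^p,s_2^{\mathrm{last}},\dots,s_{d-1}^{\mathrm{last}},s_d^{(1)})$; these two vertices agree on the $d$-th coordinate, and the latter precedes the former in $\mathcal{BL}_{G_{\{1,\dots,d-1\}}}^{d-1}$ because $s_1^p<s_1^q$ already decides the block comparison. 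Strong compression with $S=\{1,\dots,d-1\}$ then puts this vertex in $A$, so $\Stack_G(\alpha_p)\cap A\neq\emptyset$ after all, and slice-compression fills every earlier stack. You missed this because you restricted yourself to one-dimensional compressions and the bone lemmas.

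Your Part 3 inherits the gap from Part 2 and is also more elaborate than necessary: you bootstrap Parts 1 and 2 on the intermediate slice and then have to treat separately the degenerate situation where each slice has a single stack, and that degenerate argument is itself only a sketch (``rules out the simultaneous occurrence'' is asserted, not shown; the two witnessing vertices need not lie in the same $\{1,d\}$-section). The paper avoids all of this with the same device as in Part 2: for any $x_d\in V_{G_d}$ the vertex $(s_1^{q-1},s_2^{(1)},\dots,s_{d-1}^{(1)},x_d)$ shares coordinates $2,\dots,d-1$ with the start of $\Slice_G(q)$ and precedes it in $\mathcal{BL}_{G_{\{1,d\}}}^2$, so the entire line $\{s_1^{q-1}\}\times\cdots\times\{s_{d-1}^{(1)}\}\times V_{G_d}$ lies in $A$; one more application of $S=\{1,\dots,d-1\}$ then forces $\Slice_G(p)\subseteq A$. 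No case split on the number of stacks is needed.
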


\begin{proof}
Assume to the contrary that the first claim does not hold.
Since $A$ is slice compressed, one has $\Stack_G(\alpha_q)\subseteq A$.
By Lemma \ref{lineImpliesSlice}, $\Slice_G(p)\subseteq A$, 
a contradiction.
For the second one note that $\Slice_G(q) \cap A \neq \emptyset$ implies 
$\Stack_G(\alpha_p)\cap A \neq \emptyset$, since $A$ is strongly compressed.
Thus, all stacks preceding $\Stack_G(\alpha_p)$ must be in $A$.
For the last statement, assume that $q>p+1$.
Then $\Slice_G(p) <_{\mathcal{BL}_G^d} \Slice_G(q-1)$.
Let $(s_1,\dots, s_d)$ be the start of the first block in $\Slice_G(q-1)$.
Since $A$ is strongly compressed we have
\begin{align*}
	\{s_1\}\times \cdots \times \{s_{d-1}\}\times V_{G_d} \subseteq A.
\end{align*}
For $x=(x_1,\dots, x_d)\in \Slice_G(p)$ one has $(s_1,\dots,s_{d-1},x_d)\in A$
and $x_1<s_1$. Hence, $x\in A$, since $A$ is strongly compressed.
Therefore, $\Slice_G(p)\subseteq A$, a contradiction.
\end{proof}

\begin{theorem}\label{last_of_final_proof}
For any strongly and slice compressed set $A\subseteq V_G$ 
there exists a strongly and block compressed set $B\subseteq V_G$ such that
$|A|=|B|$ and $|I_G(A)|\leq |I_G(B)|$.
\end{theorem}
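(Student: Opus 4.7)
The strategy parallels that of Theorem \ref{sliceCompression}: if $A$ is not already block compressed, locate the obstruction, perform pull-push operations to remove it without decreasing the induced edge count, and iterate. A suitable complexity measure (e.g.\ the number of slices that intersect $A$ but are not contained in $A$, together with the size of $A$ in the last incomplete stack) will strictly decrease at each step, so termination is guaranteed.

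The key structural input is Lemma \ref{consecutiveSlices}: if $A$ is strongly and slice compressed but not block compressed, then there is a unique pair of consecutive slices $\Slice_G(p) <_{\mathcal{BL}_G^d} \Slice_G(p+1) = \Slice_G(q)$ witnessing the obstruction, all of $A \cap \Slice_G(q)$ lives in the first stack $\Stack_G(\alpha_q)$, and $\Slice_G(p)$ is full except possibly within its last stack $\Stack_G(\alpha_p)$. The regularity condition of the main theorem guarantees that $\Stack_G(\alpha_p)$ and $\Stack_G(\alpha_q)$ are identical in the intermediate coordinates $2,\dots,d-1$: the first and last parts of each $\mathfrak{P}_{G_i}$ ($2 \le i \le d-1$) share the same $\delta$-sequence, and by condition 2 of a regular domination collection their blocks are dominated by the same permutation. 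Thus the blocks of the two stacks pair up canonically, and the only true difference between $\Stack_G(\alpha_p)$ and $\Stack_G(\alpha_q)$ lies in the first coordinate, which occupies $Z_1^p$ versus $Z_1^q$.

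The plan is to transfer vertices from $\Stack_G(\alpha_q)$ back to $\Stack_G(\alpha_p)$ via three pull-push operations, each modeled on the one in Theorem \ref{sliceCompression} but with $Z_1^p$ and $Z_1^q$ playing the role of the single partition $Z_1$ there. Concretely, for a pair of corresponding blocks $D \subseteq \Stack_G(\alpha_p)$ and $C \subseteq \Stack_G(\alpha_q)$ across which we need to move vertices, let $x \in Z_1^p$ be the first coordinate at which $D$ has an unfilled section and $y \in Z_1^q$ be the last coordinate at which $C \cap A$ is nonempty. Pull: project the pattern of $A$ at $x$-section of $D$ onto the $y$-section, form the auxiliary compressed set $A'$ by discarding later content, then compress along $S = \{2,\dots,d\}$ to move $a$ vertices from the $y$-section of $C$ to the $y$-section of $D$; this is legal by Lemma \ref{downIsBetter} and Corollary \ref{posetTrick}. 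Push: transfer the same $a$ vertices from $C$'s $y$-section to $D$'s $x$-section in $A$, and invoke Lemma \ref{deltaInPartition} and the non-decreasing property of $\mathfrak{P}_{G_1}$ to conclude that the weight change is $a(\Delta_{(Z_1^p, I_{G_1}(Z_1^p))}(x) - \Delta_{(Z_1^q, I_{G_1}(Z_1^q))}(y)) \ge 0$. The first pull-push fills the last incomplete block $D_t$ of $\Stack_G(\alpha_p)$, the second fills the subsequent empty blocks of $\Stack_G(\alpha_p)$, and the third is a cleanup that reapplies strong compression and restores slice compression so that the invariants are preserved for the next iteration.

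The hardest step will be verifying that the intermediate set $A'$ produced by each pull remains sufficiently compressed for the compression lemmas to apply \emph{across} the two slices. In Theorem \ref{sliceCompression} the two blocks in play lived in a common slice, so strong compression in the non-pull directions was automatically transported. Here the two stacks lie in different slices and different parts of $\mathfrak{P}_{G_i}$ for $i=2,\dots,d-1$, so the compression-preservation argument must combine (i) regularity, which supplies the canonical pairing of blocks and equality of their domination orders, with (ii) the fact that only the first coordinate changes during the push. Once this analogue of the compressedness checklist from Theorem \ref{sliceCompression} is verified, the remaining accounting, the weight inequality from the non-decreasing property of $\mathfrak{P}_{G_1}$, and the iteration to a stable block-compressed $B$ proceed essentially as before.
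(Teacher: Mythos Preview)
Your proposal has a genuine gap in the choice of compression direction. You take $S=\{2,\dots,d\}$ and index sections by the first coordinate, with $x\in Z_1^p$ and $y\in Z_1^q$. But compression along $\{2,\dots,d\}$ fixes the first coordinate, and the first coordinate is exactly what distinguishes slices: every vertex of $\Stack_G(\alpha_p)$ has first coordinate in $Z_1^p$ while every vertex of $\Stack_G(\alpha_q)$ has first coordinate in $Z_1^q$. So $\Comp_{G,\mathcal{O}_{G_S}}$ cannot move a single vertex from one slice to the other, and the phrase ``the $y$-section of $D$'' is empty. Relatedly, your claim that the two stacks differ ``only'' in the first coordinate conflates \emph{isomorphic} with \emph{equal}: $\Stack_G(\alpha_p)$ has middle coordinates in the last parts $Y_2\times\cdots\times Y_{d-1}$ while $\Stack_G(\alpha_q)$ has them in the first parts $Z_2\times\cdots\times Z_{d-1}$; regularity makes these isomorphic but not identical as subsets of $V_G$. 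Finally, your push inequality $\Delta_{(Z_1^p,I_{G_1}(Z_1^p))}(x)\ge \Delta_{(Z_1^q,I_{G_1}(Z_1^q))}(y)$ compares $\delta$-values across two different parts of $\mathfrak{P}_{G_1}$; the non-decreasing hypothesis only constrains each part internally, and $\mathfrak{P}_{G_1}$ is not assumed regular (only $\mathfrak{P}_{G_2},\dots,\mathfrak{P}_{G_{d-1}}$ are).

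The paper avoids all of this by compressing in the orthogonal direction: it takes $S=\{1,d\}$ and indexes the two-dimensional sections by the middle coordinates $v\in V_{H_Z}$ (resp.\ $V_{H_Y}$). The pull does \emph{not} go directly to $\Stack_G(\alpha_{q-1})$; it first projects to the \emph{first} stack $\Stack_G(\psi)$ of $\Slice_G(q-1)$, which genuinely shares the middle coordinates $Z_2\times\cdots\times Z_{d-1}$ with $\Stack_G(\alpha_q)$, so $\Comp_{G,\{1,d\}}$ can legally transport vertices between them. The push then uses the regularity of $\mathfrak{P}_{G_2},\dots,\mathfrak{P}_{G_{d-1}}$ (and the equality of domination permutations on $B_1,B_2$) to map $\Stack_G(\psi)$ onto $\Stack_G(\alpha_{q-1})$, and the weight comparison reduces to $\Delta_{H_Y}(\cdot)-\Delta_{H_Z}(\cdot)$ together with the nonnegative start-weight $\Delta_{G_{\{2,\dots,d-1\}}}(\Start(H_Y))$. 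The case split (Case~1: $k<l$; Case~2.1: $k\ge l$, $m<n$; Case~2.2: $k\ge l$, $m\ge n$) governs how much of $\Stack_G(\psi)$ must be pre-filled in the auxiliary set $A'$ so that it stays compressed; it is not three sequential pull-pushes as you suggest.
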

\begin{proof}
Let $q$ be the largest integer such that $\Slice_G(q)\cap A\neq\emptyset$.
Omitting trivial cases we assume $q>1$ and $\Slice_G(q-1)\not\subseteq A$.
By Lemma \ref{consecutiveSlices}, if $q\geq 3$ we have 
$\Slice_G(1),\dots, \Slice_G(q-2)\subseteq A$.
Let $\Stack_G(\alpha_q)$ be the first stack of $\Slice_G(q)$,
and $\Stack_G(\alpha_{q-1})$ be the last stack in $\Slice_G(q-1)$.
Note that we always take stacks in the $d$-th direction when talking 
about them in slices. Lemma \ref{consecutiveSlices} tells that for 
every stack $\Stack_G(\beta)\subseteq \Slice_G(q)$ different from
$\Stack_G(\alpha_q)$ one has $\Stack_G(\beta)\cap A = \emptyset$. 
By the same lemma, $\Stack_G(\beta)\subseteq A$ for every stack
$\Stack_G(\beta)\subseteq \Slice_G(q-1)$ different from 
$\Stack_G(\alpha_{q-1})$.
We will apply pull-push approach to pump vertices from 
$\Stack_G(\alpha_q)$ to $\Stack_G(\alpha_{q-1})$.
	
Let $B_q = Z_1\times \cdots \times Z_d$ and $B_{q-1} = Y_1\times \cdots
\times Y_d$  be the first blocks of $\Stack_G(\alpha_q)$ and 
$\Stack_G(\alpha_{q-1})$, respectively. Denote
\begin{align*}
 H_Z&=(Z_2,I_{G_2}(Z_2))\square \cdots \square (Z_{d-1},I_{G_{d-1}}(Z_{d-1})),\\
 H_Y&=(Y_2,I_{G_2}(Y_2))\square \cdots \square (Y_{d-1},I_{G_{d-1}}(Y_{d-1})).
\end{align*}
	
\begin{figure}
\centering
\includegraphics[width=0.5\textwidth]{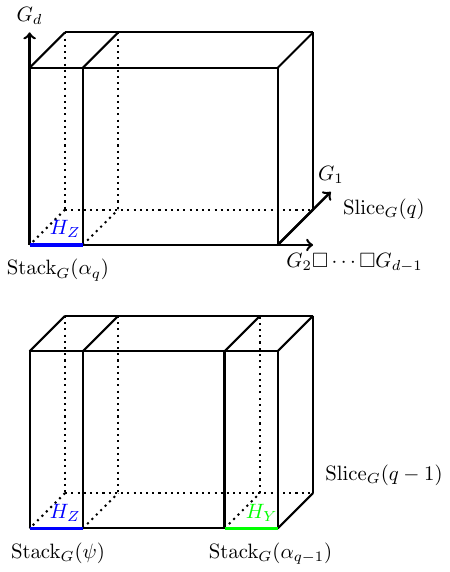}
\caption{Two consecutive slices and stacks that appear in the proof 
of Theorem \ref{last_of_final_proof}.}
\label{theorem 5.2 set ups}
\end{figure}
	
It follows that $Z_i$ and $Y_i$ are, respectively, the first and the last 
parts of the partition $\mathfrak{P}_{G_i}$ for $i\in\{2,\dots, d-1\}$.
These notations are illustrated in Figure \ref{theorem 5.2 set ups}.
Remember that according to our assumption, the partitions 
$\mathfrak{P}_{G_1},\dots,\mathfrak{P}_{G_d}$ form a regular domination
collection, which means that:

\begin{enumerate}
\item For $i=2,\dots,d-1$ the partition $\mathfrak{P}_{G_i}$ is 
regular, i.e., $\delta_{(Z_i,I_{G_i}(Z_i))} = \delta_{(Y_i,I_{G_i}(Y_i))}$.
\item There is a $\tau \in \mathfrak{S}_{d-2}$ such that the domination orders 
on $H_Z$ and $H_Y$ are both induced by $\mathcal{D}^{\tau, d-2}$.
Denote $\mathcal{D}=\mathcal{D}^{\tau, d-2}$ for brevity.
\end{enumerate}
	
Set $S=\{1,d\}$ and let $y\in V_{H_Y}$ be the first vertex in the order 
$\mathcal{D}_{H_Y}$ for which $V_{G_S(y)}\cap \Stack_G(\alpha_{q-1})\not\subseteq A$.
Also, let $z\in V_{H_Z}$ be the last vertex in the order $\mathcal{D}_{H_Z}$
such that $V_{G_S(z)}\cap \Stack_G(\alpha_q)\cap A \neq \emptyset$. It
follows that $V_{G_S(v)}\cap \Stack_G(\alpha_{q-1}) \subseteq A$
for every $v\in V_{H_Y}$ with $v<_{\mathcal{D}_{H_Y}} y$ and $V_{G_S(v)}\cap
\Stack_G(\alpha_q)\cap A = \emptyset$ for every $v\in V_{H_Z}$ with
$v>_{\mathcal{D}_{H_Z}} z$. Denote $k=\mathcal{D}_{H_Y}(y)$ and $l=\mathcal{D}_{H_Z}(z)$ and
let $\Stack_G(\psi)$ be the first stack of the slice $\Slice_G(q-1)$.
	
{\it Case 1:} Assume $k < l$.
Let $W$ be the projection of $A\cap \Stack_G(\alpha_{q-1})$ to 
$\Stack_G(\psi)$, i.e.,
\begin{align*}
  W = \{(v_1, \mathcal{D}_{H_Z}^{-1}(\mathcal{D}_{H_Y}(u)), v_d) \bigm | 
        (v_1,u,v_d)\in \Stack_G(\alpha_{q-1})\cap A\}.
\end{align*}
Consider the set $A'$,
\begin{align*}
	A' = (A\setminus \Slice_G(q-1))\cup W.
\end{align*}
Set $A'$ has the following:
\begin{enumerate}
\item $A'$ is compressed. 
To show this, let $v = (v_1,v_2,\dots,v_d) \in A'$ and 
$u = (u_1,u_2,\dots,u_d) \in V_G$ such that $\exists i \in \{1,2,\dots,d\}$ 
with $u_i <_{\mathcal{O}_{G_i}} v_i$ and $u_j = v_j$ for all $j \neq i$.
Without loss of generality we assume that $u \not\in \Slice_G(p)$ for 
$p < q - 1$ since $\Slice_G(p) \subseteq A'$.
First, suppose $v \in \Slice_G(q-1)$. If $u \in \Slice_G(q-1)$, 
then $u,v\in\Stack_G(\psi)$, hence $u \in A'$ as $A$ is compressed.
Now suppose $v \in \Slice_G(q)$. If $u \in \Slice_G(q)$, then 
$u\in A'$ since $A$ is compressed. If $u \in \Slice_G(q-1)$ then $i=1$,
hence $u = (u_1,v_2,v_3,\dots,v_d)$. Since $\Slice_G(q) \cap A' = 
\Slice_G(q) \cap A$, we have $v \in A$.
So, any vertex $(u_1',u_2',\dots,u_{d-1}',v_d)\in V_G\cap\Slice_G(q-1)$ 
with $u_1' <_{\mathcal{O}_{G_1}} v_1$ is in $A$ as $A$ is strongly compressed.
In particular, $(u_1, \mathcal{D}_{H_Y}^{-1}(\mathcal{D}_{H_Z}(u_2,u_3,\dots,u_{d-1})), v_d) 
\in A$. This implies $u \in A'$.

\item For every $v\in V_{H_Z}$ the set $V_{G_S(v)}\cap A'\cap \Stack_G(\alpha_q)$ 
is an initial segment of order $\mathcal{BL}_{G_S(v)}^2$ restricted to 
$\Stack_G(\alpha_q)$, since $A$ is strongly compressed.

\item For every $v\in V_{H_Z}$ the set $V_{G_S(v)}\cap A'\cap \Stack_G(\psi)$ 
is an initial segment of order $\mathcal{BL}_{G_S(v)}^2$ restricted to 
$\Stack_G(\psi)$, since $A$ is strongly compressed and by the definition of $W$.
\end{enumerate}

Construct $D' = \Comp_{G,S}(A')$ (see Figure \ref{pulling method}) and denote
by $y'\in V_{H_Z}$ the vertex corresponding to $y$, i.e., $y'=\mathcal{D}_{H_Z}^{-1}(k)$. 
The set $D'$ can be constructed by moving vertices from 
$V_{G_S(v)}\cap A'\cap \Stack_G(\alpha_q)$ to 
$(V_{G_S(v)}\cap \Stack_G(\psi))\setminus A'$, 
for all $v\in R_{H_Z}$, where 
$R_{H_Z}=\{v\in V_{H_Z}\bigm| y' \leq_{\mathcal{D}_{H_Z}} v \leq_{\mathcal{D}_{H_Z}} z\}$.
For $v\in R_{H_Z}$ denote by $a_v$ the number of vertices moved,
\begin{align*}
	a_v = \min \{|V_{G_S(v)}\cap A'\cap \Stack_G(\alpha_q)|,
	|(V_{G_S(v)}\cap \Stack_G(\psi))\setminus A'|\}.
\end{align*}
	
\begin{figure}
\centering
\includegraphics[width=0.6\textwidth]{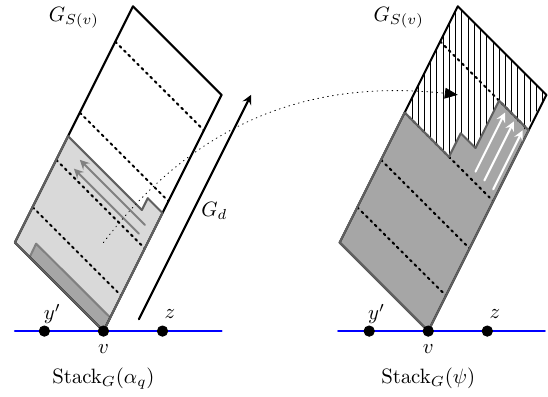}
\caption{Construction of $D'$ in case 1 of the proof.}
\label{pulling method}
\end{figure}

The set $D'$ can be constructed from $A'$ by applying the following two steps
for all $v\in R$: 
\begin{enumerate}
\item Remove the set $T_{q,v}$ consisting of the last $a_v$ 
vertices of $V_{G_S(v)}\cap A'\cap \Stack_G(\alpha_q)$ in the order 
$\mathcal{BL}_{G_S(v)}^2$ restricted to $\Stack_G(\alpha_q)$. 
\item Add the set $T_{q-1,v}$ consisting of the first $a_v$ vertices in 
$(V_{G_S(v)} \cap \Stack_G(\psi))\setminus A'$ 
in the order $\mathcal{BL}_{G_S(v)}^2$ restricted to $\Stack_G(\psi)$. 
\end{enumerate}
Corollary \ref{posetTrick} implies
\begin{align*}
	0 \leq |I_G(D')|-|I_G(A')| = 
	\sum_{v\in R_{H_Z}} \omega_G(T_{q-1,v})-\omega_G(T_{q,v}).
\end{align*}
For $v\in R_{H_Z}$ denote
\begin{align*}
 T_{v} = \{ (v_1, \mathcal{D}_{H_Y}^{-1}(\mathcal{D}_{H_Z}(u)), v_d)\bigm | (v_1,u, v_d)\in T_{q-1,v}\}
\end{align*}
and construct the set	
\begin{align*}
	D = \left( A \setminus \bigcup_{v\in R_{H_Z}} T_{q,v}\right)\cup
      	\bigcup_{v\in R_{H_Z}} T_v.
\end{align*}
We show that $D$ is compressed. Indeed, let $v = (v_1,v_2,\dots,v_d) \in D$ 
and $u = (u_1,u_2,\dots,u_d) \in V_G$ such that 
$\exists  i \in \{1,2,\dots,d\}$ with $u_i <_{\mathcal{O}_{G_i}} v_i$ and 
$u_j = v_j$ for all $j \neq i$.
Without loss of generality we assume $u \not\in \Slice_G(p)$ for all 
$p < q-1$ since $\Slice_G(p) \subseteq D$.
If $v \in \Slice_G(q - 1)$, then $u \in \Slice_G(q-1)$ by the previous 
assumption.

If $u \not\in \Stack_G(\alpha_{q-1})$, then $u \in D$ since for any stack 
$\Stack_G(\beta) \subseteq \Slice_G(q-1)$ different from 
$\Stack_G(\alpha_{q-1})$ we have $\Stack_G(\beta) \subseteq D$.
So suppose $u \in \Stack_G(\alpha_{q-1})$. Then $u \in D$ since 
$\Stack_G(\alpha_{q-1}) \cap D$ is a projection of $\Stack_G(\psi)\cap D'$
which is in the compressed set $D'$.
If $v \in \Slice_G(q)$ and $u \in \Slice_G(q)$, then $u \in D$ as 
$D \cap \Slice_G(q) = D' \cap \Slice_G(q)$ and $D'$ is compressed.
Finally, if $v \in \Slice_G(q)$ and $u \in \Slice_G(q-1)$ we may assume
$u \in \Stack_G(\psi)$.
Then $i = 1$, hence $u = (u_1,v_2,v_3,\dots,v_d)$.
Since $u \in \Stack_G(\psi) \subseteq D$, we get $u \in D$. 
Therefore, $D$ is compressed.

One has,
\begin{align*}
|I_G(D)| - |I_G(A)| &= \sum_{v\in R_{H_Z}} \omega_G(T_v) - \omega_G(T_{q,v}),\\
&= \sum_{v\in R_{H_Z}} \omega_G(T_v)- \omega_G(T_{q-1,v}) + \omega_G(T_{q-1,v}) - \omega_G(T_{q,v}),\\
&\geq \sum_{v\in R_{H_Z}} \omega_G(T_v)- \omega_G(T_{q-1,v})\\
&=\sum_{v\in R_{H_Z}} a_v(\Delta_{G_{\{1,\dots, d\}\setminus S} }
	(\Start_{G_{\{1,\dots, d\}\setminus S} } (H_Y)) +
	\delta_{H_Y}({\mathcal D}_{H_Z}(v)) - \Delta_{H_Z}(v))\\
&\geq \sum_{v\in R_{H_Z}} a_v(\delta_{H_Y}({\mathcal D}_{H_Z}(v)) - \Delta_{H_Z}(v))\\
	&= 0.
\end{align*}
	
{\it Case 2:} Assume $k\geq l$. 
The reader might find it helpful to recall Figure \ref{theorem 5.2 set ups} throughout this case.
Denote $F = \{\pi(d-2)\}$.
Note that $F$ consists of the ``most dominating direction" of $H$ and $J$ 
under the order $\mathcal{D}$.
We can write $z=(z_1,\dots, z_{d-2})$ and $y = (y_1,\dots, y_{d-2})$, and denote
\begin{align*}
	Z^* &= Z_2 \times \cdots Z_{\pi(d-1)-1} \times Z_{\pi(d-1)+1} \times \cdots \times Z_{d-1},\\ 
	Y^* &= Y_2 \times \cdots Y_{\pi(d-1)-1} \times Y_{\pi(d-1)+1} \times \cdots \times Y_{d-1},\\
	z^{\ast} &= (z_1,\dots, z_{\pi(d-2)-1},z_{\pi(d-2)+1},\dots, z_{d-2}),\\
	y^{\ast} &= (y_1,\dots, y_{\pi(d-2)-1},y_{\pi(d-2)+1},\dots, y_{d-2}).
\end{align*}
Further denote by $\mathcal{Y}$ and $\mathcal{Z}$ the domination orders on 
$H_{Y_F}(y^{\ast})$ and $H_{Z_F}(z^{\ast})$, respectively.
Note that the orders $\mathcal{Y}$ and $\mathcal{Z}$ are just the restrictions of
$\mathcal{D}_{H_Y}$ to $H_{Y_F}(y^{\ast})$ and $\mathcal{D}_{H_Z}$ to $H_{Z_F}(z^{\ast})$ respectively.
Denote $m = \mathcal{Y}(Y)$ and $n=\mathcal{Z}(z)$.
Note that $H_{Y_F}(y^{\ast})$ and $H_{Z_F}(z^{\ast})$ are one dimensional structures.
Furthermore, the graph $(H_{Y_F}(y^{\ast}), I_G(H_{Y_F}(y^{\ast}))$ is
isomorphic to the graph induced by the last part of the partition 
$\mathfrak{P}_{G_{\pi(d-2)+1}}$,
and the graph $(H_{Z_F}(z^{\ast}), I_G(H_{Z_F}(z^{\ast}))$ is isomorphic to the graph 
induced by the first part of the partition $\mathfrak{P}_{G_{\pi(d-2)+1}}$.
More formally,
\begin{align*}
	(H_{Y_F}(y^{\ast}), I_G(H_{Y_F}(y^{\ast}))
	&\cong
	(Y_{\pi(d-2)+1}, I_{G_{\pi(d-2)+1}}(Y_{\pi(d-2)+1}))\\
	(H_{Z_F}(z^{\ast}), I_G(H_{Z_F}(z^{\ast}))
	&\cong
	(Z_{\pi(d-2)+1}, I_{G_{\pi(d-2)+1}}(Z_{\pi(d-2)+1}))\\
\end{align*}
Thus, there are orders $\mathcal{\overline{Y}}$ and $\mathcal{\overline{Z}}$ in these
graphs, corresponding to the orders $\mathcal{Y}$ and $\mathcal{Z}$.
	
{\it Case 2.1:} Assume $m<n$ (see Figure \ref{Case2.1 Setup}).
	
\begin{figure}
\centering
\includegraphics[width=0.75\textwidth]{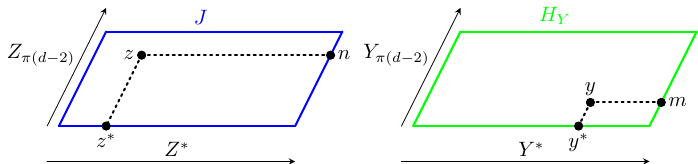}
\caption{General setup for case 2. Case 2.1 is shown, since $m<n$.}
\label{Case2.1 Setup}
\end{figure}
	
Denote
\begin{align*}
	y' &= (z_1,\dots , z_{\pi(d-2)-1}, \mathcal{\overline{Z}}^{-1}(m), z_{\pi(d-2)+1},\dots, , z_{d-2}),\\
	z' &= (y_1,\dots , y_{\pi(d-2)-1}, \mathcal{\overline{Y}}^{-1}(n), y_{\pi(d-2)+1},\dots, , y_{d-2}).
\end{align*}
Denote by $E$ the set of all two-dimensional sections under 
$\mathcal{D}_{H_Z}$ and $S=\{1,d\}$ of $\Stack_G(\psi)$ preceding 
$\Stack_G(\psi)\cap V_{G_S(y')}$, i.e.,
\begin{align*}
	E = \bigcup_{\substack{v\in V_{H_Z}\\ v<_{\mathcal{D}_{H_Z}} y' }} 
	(\Stack_G(\psi)\cap V_{G_S(v)}).
\end{align*}
For $v=(y_1,\dots, y_{\pi(d-2)-1}, u, y_{\pi(d-2)+1},\dots, , y_{d-2})\in R_{H_Y}$ 
with $R_{H_Y}=\{v\in V_{H_Y}\bigm|y \leq_{\mathcal{D}_{H_Y}} v \leq_{\mathcal{D}_{H_Y}} z'\}$ denote
\begin{align*}
	v' &= (z_1,\dots , z_{\pi(d-2)-1}, 
	\mathcal{\overline{Z}}^{-1}(\mathcal{\overline{Y}}(u)), 
	z_{\pi(d-2)+1},\dots, , z_{d-2}).
\end{align*}
For $v\in R_{H_Y}$ denote by $W_v$ the projection of 
$V_{G_S(v)}\cap \Stack_G(\alpha_{q-1})\cap A$ to 
$V_{G_S(v')}\cap \Stack_G(\psi)$, i.e.,
\begin{align*}
	W_v = \{(v_1,v',v_d) \bigm | (v_1,v,v_d)\in V_{G_S(v)}\cap \Stack_G(\alpha_{q-1})\cap A \},
\end{align*}
and put
\begin{align*}
	W = \bigcup_{v\in R_{H_Y}} W_v.
\end{align*}
Finally, define
\begin{align*}
	A' = (A\setminus \Slice_G(q-1))\cup E\cup W.
\end{align*}
Set $A'$ has the following properties:
\begin{enumerate}
\item $A'$ is compressed. 
The proof is similar to case 1 and is left to the reader.
\item For each $v\in V_{H_Z}$ the set $V_{G_S(v)}\cap A'\cap \Stack_G(\alpha_q)$ (see Figure \ref{Case2.1 Dimond} for the general picture)
is an initial segment of $\mathcal{BL}_{G_S(v)}^2$ restricted to 
$\Stack_G(\alpha_q)$, since $A$ is strongly compressed.
\item For each $v\in V_{H_Z}$ the set $V_{G_S(v)}\cap A'\cap \Stack_G(\psi)$  (see Figure \ref{Case2.1 Dimond} for the general picture)
is an initial segment of $\mathcal{BL}_{G_S(v)}^2$ restricted to $\Stack_G(\psi)$,
since $A$ is strongly compressed and by the definition of $W$.
\end{enumerate}

\begin{figure}
	\centering
	\includegraphics[width=0.85\textwidth]{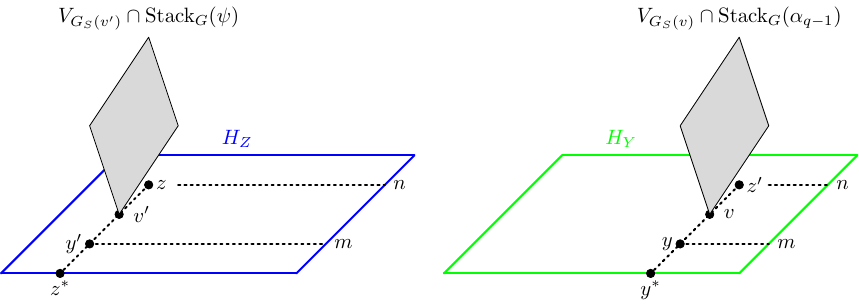}
	\caption{Setup for the pull part in Case 2.1}
	\label{Case2.1 Dimond}
\end{figure}

Denote $D' = \Comp_{G,S}(A')$. The set $D'$ can be constructed by moving 
$a_v$ vertices from $V_{G_S(v)}\cap A'\cap \Stack_G(\alpha_q)$ to 
$(V_{G_S(v)}\cap \Stack_G(\psi))\setminus A'$, for all $v\in R_{H_Z}$ where
$R_{H_Z}=\{v\in V_{H_Z}\bigm| y' \leq_{\mathcal{D}_{H_Z}} v \leq_{\mathcal{D}_{H_Z}} z\}$ and 
\begin{align*}
	a_v = \min \{|V_{G_S(v)}\cap A'\cap \Stack_G(\alpha_q)|,
	|(V_{G_S(v)}\cap \Stack_G(\psi))\setminus A'|\}.
\end{align*}
	
The set $D'$ can be obtained from $A'$ by applying the following steps for
every $v\in R_{H_Z}$: 
\begin{enumerate}
\item Remove the set $T_{q,v}$ consisting of the last $a_v$ vertices of 
$V_{G_S(v)}\cap A'\cap \Stack_G(\alpha_q)$ in the order $\mathcal{BL}_{G_S(v)}^2$ 
restricted to $\Stack_G(\alpha_q)$. 
\item Add the set $T_{q-1,v}$ consisting of the first $a_v$ vertices of 
$(V_{G_S(v)} \cap \Stack_G(\psi))\setminus A'$ 
in the order $\mathcal{BL}_{G_S(v)}^2$ restricted to $\Stack_G(\psi)$. 
\end{enumerate}
Corollary \ref{posetTrick} implies
\begin{align*}
	0 \leq |I_G(D')|-|I_G(A')| = 
	\sum_{v\in R_{H_Z}} \omega_G(T_{q-1,v})-\omega_G(T_{q,v}).
\end{align*}
For $v\in R_{H_Y}$ denote
\begin{align*}
	T_{v} = \{ &(v_1,v, v_d) \bigm | (v_1,v', v_d) \in T_{q-1,v'}\}.
\end{align*}
and consider the set
\begin{align*}
	D = \left( A \setminus \bigcup_{v\in R_{H_Z}} T_{q,v} \right) 
		\cup \bigcup_{v\in R_{H_Y}} T_v.
\end{align*}
Similarly to case 1 it can be shown that $D$ is compressed. One has,
\begin{align*}
	|I_G(D)| - |I_G(A)| &= 
	\sum_{v\in R_{H_Y}} \omega_G(T_v) - \omega_G(T_{q,v'}),\\
	&= \sum_{v\in R_{H_Y}} \omega_G(T_v)- \omega_G(T_{q-1,v'}) + 
	\omega_G(T_{q-1,v'}) - \omega_G(T_{q,v'}),\\
	&\geq \sum_{v\in R_{H_Y}} \omega_G(T_v)- \omega_G(T_{q-1,v'})\\
	&=\sum_{v\in R_{H_Y}} a_v(\Delta_{G_{\{1,\dots, d\}\setminus S} }
	(\Start_{G_{\{1,\dots, d\}\setminus S} } (H_Y)) +
	\Delta_{H_Y}(v) - \Delta_{H_Z}(v'))\\
	&\geq \sum_{v\in R_{H_Y}} a_v(0+\Delta_{H_Y}(v) - \Delta_{H_Z}(v'))\\
	&\geq 0.
\end{align*}
Note that the last inequality follows from the non-decreasing property
of $\mathfrak{P}_{G_2},\dots, \mathfrak{P}_{G_{d-1}}$.
	
{\it Case 2.2:} Now assume $m\geq n$.
Denote by $E$ the set of all sections under $\mathcal{D}_{H_Z}$ and $S$ of 
$\Stack_G(\psi)$ preceding $\Stack_G(\psi)\cap V_{G_S(z)}$, i.e.,
\begin{align*}
	E = \bigcup_{\substack{v\in V_{H_Z}\\ v<_{\mathcal{D}_{H_Z}} z }} 
	\Stack_G(\psi)\cap V_{G_S(v)}.
\end{align*}
Let $W$ be the projection of $V_{G_S(y)}\cap \Stack_G(\alpha_{q-1})\cap A$
to $V_{G_S(z)}\cap \Stack_G(\psi)$, i.e.,
\begin{align*}
	W = \{(v_1,z,v_d) \bigm | (v_1,y,v_d)\in V_{G_S(y)}\cap \Stack_G(\alpha_{q-1})\cap A \}.
\end{align*}
Consider the set
\begin{align*}
	A' = (A-\Slice_G(q-1))\cup E\cup W.
\end{align*}
Set $A'$ has the following properties:
\begin{enumerate}
\item $A'$ is compressed. 
The proof is similar to case 1 and is left to the reader.
\item The set $V_{G_S(z)}\cap A'\cap \Stack_G(\alpha_q)$ is an initial segment 
of $\mathcal{BL}_{G_S(z)}^2$ restricted to $\Stack_G(\alpha_q)$, 
since $A$ is strongly compressed.
\item The set $V_{G_S(z)}\cap A'\cap \Stack_G(\psi)$ is an initial segment 
of $\mathcal{BL}_{G_S(z)}^2$ restricted to $\Stack_G(\psi)$, since $A$ is 
strongly compressed and by the definition of $W$.
\end{enumerate}
	
Construct set $D' = \Comp_{G,S}(A')$ by moving $a$ vertices 
from $V_{G_S(z)}\cap A'\cap \Stack_G(\alpha_q)$ to 
$(V_{G_S(z)}\cap \Stack_G(\psi))\setminus A'$, where
\begin{align*}
	a = \min \{|V_{G_S(z)}\cap A'\cap \Stack_G(\alpha_q)|,
	|(V_{G_S(z)}\cap \Stack_G(\psi))\setminus A'|\}.
\end{align*}
	
More exactly, the set $D'$ can be obtained from $A'$ in two following
steps:
\begin{enumerate}
\item Remove the set $T_{q}$ consisting of the last $a$ vertices of 
$V_{G_S(z)}\cap A'\cap \Stack_G(\alpha_q)$ 
in the order $\mathcal{BL}_{G_S(z)}^2$ restricted to $\Stack_G(\alpha_q)$. 
\item Add the set $T_{q-1}$ consisting of first $a$ vertices of 
$(V_{G_S(z)} \cap \Stack_G(\psi))\setminus A'$ in the order $\mathcal{BL}_{G_S(z)}^2$ 
restricted to $\Stack_G(\psi)$. 
\end{enumerate}
Corollary \ref{posetTrick} implies
\begin{align*}
	0 \leq |I_G(D')|-|I_G(A')| = \omega_G(T_{q-1})-\omega_G(T_{q}).
\end{align*}
	
Denote
\begin{align*}
	T_{y} = \{ &(v_1,y, v_d) \bigm | (v_1,z, v_d) \in T_{q-1}\}
\end{align*}
and let
\begin{align*}
	D = \left( A \setminus T_{q} \right) \cup T_y.
\end{align*}
Similarly to case 1 it can be shown that the set $D$ is compressed,
One has:
\begin{align*}
	|I_G(D)| - |I_G(A)| &= \omega_G(T_y) - \omega_G(T_{q}),\\
&= \omega_G(T_y)- \omega_G(T_{q-1}) + \omega_G(T_{q-1}) - \omega_G(T_{q}),\\
	&\geq \omega_G(T_y)- \omega_G(T_{q-1})\\
	&= a(\Delta_{G_{\{1,\dots, d\}\setminus S} }
		(\Start_{G_{\{1,\dots, d\}\setminus S} } (H_Y)) +
		\Delta_{H_Y}(y) - \Delta_{H_Z}(z))\\
	&\geq a(0+\Delta_{H_Y}(y) - \Delta_{H_Z}(z))\\
	&\geq 0.
\end{align*}
The last inequality follows from the non-decreasing property
of $\mathfrak{P}_{G_2},\dots, \mathfrak{P}_{G_{d-1}}$.
	
In cases 1 and 2 we showed how move vertices from $\Slice_G(q)$ to 
$\Slice_G(q-1)$ to transform $A$ into set $D$ such that
$|A|=|D|$ and $|I_G(A)|\leq |I_G(D)|$. Make $D$ strongly compressed and
denote the resulting set by $E$.
By Theorem \ref{sliceCompression} there is a strongly compressed and sliced 
compressed set $F$ with
\begin{align*}
	|A|&=|D|=|E|=|F|,\\
	|I_G(A)| &\leq |I_G(D)|\leq |I_G(E)| \leq |I_G(F)|.
\end{align*}
Repeat the transformations described above until we get a stable set $B$. 
This will be the case because we replace some vertices with the ones that 
come earlier in the order $\mathcal{BL}_G^d$. The set $B$ is strongly 
compressed and block compressed, $|A|=|B|$, and $|I_G(A)|\leq |I_G(B)|$. 
\end{proof}

We are almost done with the proof of our main result.
Here is a summary of what we have established:
\begin{enumerate}
	\item Reduced the problem to strongly compressed sets.
	\item Reduced the problem to slice compressed sets.
	\item Reduced the problem to block compressed sets.
\end{enumerate}
However, it could be the case that we get a block compressed set which has only 
one partially filled block that is not ordered according to the order 
$\mathcal{BL}_G^d$.
But this is not a problem, since we assumed that the domination order 
on this block is optimal. In this case we just replace the set of vertices 
in this block with an initial segment of the same size in order $\mathcal{BL}_G^d$.
The resulting set will be optimal because of the second property of 
isoperimetric partitions.  Namely, for the partition
$\mathfrak{P}_{G_j} = 
\{{\mathcal O}_{G_j}[a_1,b_1] <_{{\mathcal O}_{G_j}} \cdots 
<_{{\mathcal O}_{G_j}} {\mathcal O}_{G_j}[a_k,b_k]\} $ one has
$|I({\mathcal O}_{G_j}[a_1,b_{i-1}], \{v\})| = \delta (a_i) =  
\Delta_{G_j} ({\mathcal O}_{G_j}(a_i))$ for all $v\in {\mathcal O}_{G_j}[a_i,b_i]$.
With this concluding remark the proof of the main result is complete.

\section{Applications of the Main result}

In this section we show that most previously known results on
edge-isoperimetric problems are corollaries of our main 
Theorem \ref{mainResult}.
Let us start with the theorem of Ahlswede-Cai as the most general one.
For this we define the \textit{atomic partition} of an isoperimetric graph 
$G$ with optimal order $\mathcal{O}$,
as $\mathfrak{A}_G = \{\{{\mathcal O}_G(1)\},\dots,\{{\mathcal O}_G(|V|)\}\}$ with 
$\{ {\mathcal O}_G(1)\} <_{{\mathcal O}_G}\dots <_{{\mathcal O}_G} \{{\mathcal O}_G(|V|)\}$.

\begin{corollary}\label{lgb_Atomic}
Let $G_1,\dots, G_d$ be isoperimetric graphs and for
$S\subset\{1,\dots,d\}$ let $\mathcal{BL}_{G_S}^{|S|}$ be the
block-lexicographic order based on atomic partitions 
$\mathfrak{A}_{G_1},\dots, \mathfrak{A}_{G_d}$.
If for all $i,j\in \{1,\dots, d\}$ with $i<j$ the order 
$\mathcal{BL}_{G_i\square G_j}^{2}$ is optimal,
then $\mathcal{BL}_G^d$ is optimal for $d\geq 3$.
\end{corollary}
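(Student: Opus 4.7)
The plan is to verify that the atomic partitions $\mathfrak{A}_{G_1},\dots,\mathfrak{A}_{G_d}$ satisfy the hypotheses of Theorem \ref{mainResult}, and then apply that theorem directly. So the proof is essentially a checklist of properties, with no real combinatorial content beyond what is already in Theorem \ref{mainResult}.

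First I would check that $\mathfrak{A}_{G_i}$ is an isoperimetric partition for each $i$. Each part is a singleton $\{{\mathcal O}_{G_i}(j)\}$, so the induced subgraph is a single vertex, which is trivially isoperimetric with the one-element order. The second condition of an isoperimetric partition requires $|I({\mathcal O}_{G_i}[1,j-1],\{v\})|=\delta(a_j)$ for $v$ the lone element of the $j$-th part; since $a_j=j$ and $\mathcal{O}_{G_i}$ is optimal, $|I(\mathcal{O}_{G_i}[1,j-1],\{v\})|=|I(\mathcal{O}_{G_i}[j])|-|I(\mathcal{O}_{G_i}[j-1])|=\delta_{G_i}(j)$ as required. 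The $\delta$-sequence of each single-vertex block is $(0)$, which is non-decreasing, and the first and last blocks share this common sequence, so each $\mathfrak{A}_{G_i}$ is non-decreasing and regular.

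Next I would verify the domination collection condition. For any nonempty $S\subseteq\{1,\dots,d\}$ and any block $B$ of the subproduct $G_S$, the vertex set of $B$ is a singleton, so $H_S$ is a single vertex. Any order is trivially optimal on a one-vertex graph, so condition (1) of a domination collection is vacuously satisfied by fixing, say, $\pi_S=\text{id}$. Consistency (condition (2)) between nested singletons is also vacuous. In particular, the same permutation can be chosen uniformly on $B_1$ and $B_2$, so the collection is a regular domination collection. Moreover, since every block of $G$ and of each $G_i\square G_j$ consists of a single vertex whose start is that vertex itself, the order $\mathcal{BL}_{G_S}^{|S|}$ collapses to the lexicographic order $\mathcal{L}_{G_S}^{|S|}$; in particular $\mathcal{BL}_{G_i\square G_j}^2$ is the same as $\mathcal{L}_{G_i\square G_j}^2$, which is the standing 2-dimensional hypothesis.

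With all hypotheses of Theorem \ref{mainResult} verified, the conclusion gives that $\mathcal{BL}_G^d$ is optimal for $d\geq 3$. The only thing one has to be careful about is that ``block'' and ``start'' for atomic partitions degenerate to individual vertices, so every definition in Section 2 still makes sense; I do not expect any real obstacle, since the entire proof is a bookkeeping exercise confirming that the general framework specializes cleanly to the atomic setting and recovers the Ahlswede--Cai principle (Theorem \ref{AC_LGP}) as a special case.
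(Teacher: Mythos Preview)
Your proposal is correct and follows exactly the same approach as the paper: verify that atomic partitions satisfy all hypotheses of Theorem~\ref{mainResult} (isoperimetric, non-decreasing, regular, and forming a regular domination collection) and then apply that theorem. The paper's own proof is simply a two-line version of yours, asserting without detail that ``atomic partitions are regular and non-decreasing,'' so your more explicit checklist is a faithful expansion of the intended argument.
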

\begin{proof}
Atomic partitions are regular and non-decreasing.
So, all the conditions of Theorem \ref{mainResult} are satisfied.
\end{proof}

The block-lexicographic order of Corollary \ref{lgb_Atomic}
is just the lexicographic order. Every block of this order consists of only one
vertex. The blocks are ordered according to their starts which, in
turn, are ordered lexicographically. Thus, there is a unique domination 
order of each block and unique block lexicographic order for each sub-product.
This way slices are $(d-1)$-dimensional sub-products and stacks are
$1$-dimensional ones. Also, the proofs of most statements in section 5 can be merely 
simplified and deduced from the properties of the strong compression.

Evidently, every application of Theorem \ref{AC_LGP} based on the
local-global principle follows from Corollary \ref{lgb_Atomic}. Let us 
mention just a few of them. In all these results the 2-dimensional case 
must be handled separately. In some cases, e.g., the hypercube it is 
rather trivial, whereas in other cases like cliques or Petersen graphs 
it requires more work. There are not any general methods known to us that 
eliminate multiple sub-cases by working with compressed sets in products of
two graphs.

\begin{corollary}\label{cubeCor}
The order $\mathcal{L}_{K^d_2}^d$ is optimal for every $d\geq 2$.
\end{corollary}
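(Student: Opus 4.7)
The plan is to apply Corollary \ref{lgb_Atomic} to the constant family $G_1 = \cdots = G_d = K_2$ equipped with the atomic partition, which reduces everything to the base case $d = 2$.

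First I would observe that $K_2$ is isoperimetric in a trivial sense: any bijection $V_{K_2} \to \{1,2\}$ is an optimal order, and the atomic partition $\mathfrak{A}_{K_2}$ consists of two singletons. Hence the hypotheses of Corollary \ref{lgb_Atomic} are in place once we supply the optimality of $\mathcal{BL}_{K_2 \square K_2}^2$, and as noted in the discussion following Corollary \ref{lgb_Atomic}, this block-lexicographic order coincides with $\mathcal{L}_{K_2^2}^2$. Consequently, for every $d \geq 3$ the optimality of $\mathcal{L}_{K_2^d}^d$ follows immediately from Corollary \ref{lgb_Atomic}.

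The remaining task is the base case $d = 2$, which I would dispatch by direct inspection. The graph $K_2^2$ is the $4$-cycle on vertex set $\{(0,0),(0,1),(1,0),(1,1)\}$, and the lexicographic initial segments of sizes $1,2,3,4$ are $\{(0,0)\}$, $\{(0,0),(0,1)\}$, $\{(0,0),(0,1),(1,0)\}$, and $V_{K_2^2}$, inducing $0$, $1$, $2$, and $4$ edges, respectively. I would then check optimality for each size: sizes $1$ and $4$ are trivial; for size $2$ any pair induces at most one edge since $C_4$ contains no triangle; for size $3$ any three vertices of $C_4$ form a path of length two and thus induce exactly two edges. Hence $\mathcal{L}_{K_2^2}^2$ is optimal, and the induction base is established.

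There is no real obstacle: all of the heavy lifting has already been absorbed into Theorem \ref{mainResult} and its specialization to atomic partitions in Corollary \ref{lgb_Atomic}. The only mild care needed is to verify that the atomic partition of $K_2$ genuinely satisfies the regularity and non-decreasing conditions (both are vacuous since each block is a singleton) so that Corollary \ref{lgb_Atomic} applies without adjustment.
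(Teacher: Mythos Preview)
Your proposal is correct and follows exactly the approach the paper intends: invoke Corollary~\ref{lgb_Atomic} with $G_1=\cdots=G_d=K_2$ and the atomic partition, observe that the resulting block-lexicographic order is just $\mathcal{L}^d$, and dispatch the base case $d=2$ by the direct inspection of $C_4$ that the paper calls ``rather trivial.'' The paper gives no explicit proof for this corollary, but your argument is precisely the one implied by the surrounding text.
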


\begin{corollary}[Lindsey \cite{L_Cube}]\label{completeProd}
Suppose that $1\leq n_1\leq n_2\leq \cdots \leq n_d$.
The lexicographic order is optimal for $K_{n_1}\square \cdots \square K_{n_d}$.
\end{corollary}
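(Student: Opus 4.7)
The plan is to deduce the result from Corollary \ref{lgb_Atomic} using atomic partitions on each $K_{n_i}$, with the two-dimensional case handled directly. First I would observe that every clique $K_n$ is isoperimetric in the strongest possible sense: any subset of size $m$ induces exactly $\binom{m}{2}$ edges, so every total order on $V_{K_n}$ is optimal. Fix the natural order $1,2,\ldots,n_i$ on $V_{K_{n_i}}$ and take the atomic partition $\mathfrak{A}_{K_{n_i}} = \{\{1\},\ldots,\{n_i\}\}$. Since each block is a singleton, all structural hypotheses of Theorem \ref{mainResult} (regularity, non-decreasing, domination collection consistency) hold vacuously, and as the discussion following Corollary \ref{lgb_Atomic} notes, the block-lexicographic order $\mathcal{BL}_G^d$ built from atomic partitions coincides with the lexicographic order $\mathcal{L}_G^d$.

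With this setup, Corollary \ref{lgb_Atomic} reduces the task entirely to the two-dimensional base case: for each pair $i<j$, the lexicographic order on $K_{n_i}\square K_{n_j}$ is optimal. For this I would give a classical compression argument. Let $n = n_i \leq n_j = m$ and take any $A \subseteq [n]\times[m]$ with $|A|=k$. Each row of $K_n\square K_m$ is a copy of $K_m$ and each column a copy of $K_n$, and since every $s$-element subset of a clique induces the same $\binom{s}{2}$ edges, replacing $A$ within each row by the left-most initial segment of the same size, and similarly within each column by the top-most initial segment, does not decrease $|I(A)|$. Alternating row- and column-compressions until stable yields a Young-diagram-shaped set whose row lengths are non-increasing. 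A final exchange argument moves all partial content into the last occupied row, producing the lex-initial segment of size $k$; writing $k = qm + r$ with $0 \leq r < m$, this segment has $q\binom{m}{2} + \binom{r}{2} + r\binom{q+1}{2} + (m-r)\binom{q}{2}$ induced edges, giving the desired optimality.

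The main obstacle is exactly this two-dimensional base case, which is the content of Lindsey's original theorem; the inductive step via Corollary \ref{lgb_Atomic} is essentially automatic for cliques because every structural condition of Theorem \ref{mainResult} trivializes on singleton partitions. The $d=1$ case is trivial, and the orientation convention $n_1 \leq \cdots \leq n_d$ poses no difficulty since it matches the lexicographic orientation when the first coordinate is taken to be the slowest-varying one.
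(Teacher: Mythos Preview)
Your proposal is correct and follows essentially the same route as the paper: the corollary is stated without proof in the paper, being listed among the consequences of Corollary~\ref{lgb_Atomic} with the remark that the two-dimensional case ``must be handled separately'' and for cliques ``requires more work.'' You make this explicit by observing that atomic partitions trivialize the structural hypotheses and that the block-lexicographic order then coincides with the lexicographic order, which is exactly the reduction the paper intends; your added sketch of the two-dimensional base case goes slightly beyond what the paper provides (it simply attributes that case to Lindsey~\cite{L_Cube}).
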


\begin{corollary}[Bezrukov-Bulatovic-Kuzmanovski \cite{BBK}]
Suppose that $1\leq n_1\leq \cdots \leq n_d$ and consider the 
complete bipartite graphs $K_{n_1,n_1},\dots, K_{n_d,n_d}$.
The lexicographic order is optimal for 
$K_{n_1,n_1}\square \cdots \square K_{n_d, n_d}$.
\end{corollary}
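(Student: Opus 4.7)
The plan is to apply Theorem \ref{mainResult} using a carefully chosen non-atomic isoperimetric partition of each $K_{n_i,n_i}$. A direct counting argument (the induced subgraph on $k$ vertices of one side and $\ell$ of the other has exactly $k\ell$ edges, maximized when $|k-\ell|\le 1$) shows that an optimal order on $K_{n,n}$ alternates between the two sides $A=\{a_1,\dots,a_n\}$ and $B=\{b_1,\dots,b_n\}$ of the bipartition, yielding
\[
\delta_{K_{n,n}} = (0,\, 1,\, 1,\, 2,\, 2,\, \ldots,\, n-1,\, n-1,\, n).
\]

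I would then take the isoperimetric partition
\[
\mathfrak{P}_{K_{n,n}} = \{\{a_1,b_1\},\, \{a_2,b_2\},\, \ldots,\, \{a_n,b_n\}\},
\]
in which the $k$-th block is the pair of vertices at positions $2k-1$ and $2k$ in the alternating order. Each block induces a copy of $K_2$, hence is trivially isoperimetric. To verify the second axiom of an isoperimetric partition, note that for any vertex $v$ in the $k$-th block exactly $k-1$ edges connect $v$ to the union of the earlier blocks (namely the $k-1$ vertices on the opposite side of the bipartition from $v$), which equals $\delta_{K_{n,n}}(2k-1) = k-1$. Since every block is a copy of $K_2$ with the same $\delta$-sequence $(0,1)$, the partition is both non-decreasing and regular.

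In the product $G = K_{n_1,n_1}\square \cdots \square K_{n_d,n_d}$ each block is then a product of $d$ copies of $K_2$, that is, a hypercube $K_2^d$. I would declare the domination order on every block to be the plain lexicographic order (identity permutation), which is optimal by Theorem \ref{cube}. Using the identical permutation on every block of every subproduct makes both consistency and regularity of the resulting domination collection automatic, so all hypotheses of Theorem \ref{mainResult} are satisfied apart from the two-dimensional base case.

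The main obstacle is precisely this $d=2$ base case, namely the optimality of $\mathcal{BL}_{K_{n_i,n_i}\square K_{n_j,n_j}}^2$. It must be established by a direct two-dimensional argument (carried out in \cite{BBK}); unlike the higher-dimensional step, it actually uses the detailed combinatorial structure of $K_{n,n}$ rather than merely abstract properties of the partition. Once this base case is in hand, Theorem \ref{mainResult} yields optimality of $\mathcal{BL}_G^d$ for $d\ge 3$, and a short unwinding of the definitions shows that, with the identity permutation chosen on every block, $\mathcal{BL}_G^d$ coincides with the plain lexicographic order induced by the alternating optimal orders on the factors, completing the proof.
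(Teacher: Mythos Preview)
Your final step contains a genuine error: with the non-atomic partition you describe, the block-lexicographic order $\mathcal{BL}_G^d$ does \emph{not} coincide with the plain lexicographic order $\mathcal{L}_G^d$. Take $d=2$ and $n_1=n_2=2$, so each factor has the partition $\{\{1,2\},\{3,4\}\}$. Plain lex begins $(1,1),(1,2),(1,3),(1,4),(2,1),\ldots$, whereas your block-lex order (identity permutation inside each $K_2\times K_2$ block, blocks ordered by their starts) begins $(1,1),(1,2),(2,1),(2,2),(1,3),\ldots$. The third elements already differ, and the corresponding initial segments of size $5$ are distinct vertex sets. Thus your argument, even if every earlier step is correct, proves that a \emph{different} order is optimal, not the lexicographic one asserted in the corollary. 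The two-dimensional base case you would need is likewise the optimality of $\mathcal{BL}^2$ for this non-atomic partition, which is not the statement established in \cite{BBK}.

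The paper's route avoids this entirely: the corollary is listed among the consequences of Corollary~\ref{lgb_Atomic}, i.e.\ the Ahlswede--Cai local-global principle with \emph{atomic} partitions. With atomic partitions every block is a single vertex, so $\mathcal{BL}_G^d$ is literally $\mathcal{L}_G^d$, and the only thing to check is the two-dimensional case $\mathcal{L}^2_{K_{n_i,n_i}\square K_{n_j,n_j}}$, which is exactly what \cite{BBK} supplies. Your $K_2$-block partition is valid and the verification that it is isoperimetric, non-decreasing, and regular is correct, but it is an unnecessary detour that lands on the wrong order; the direct atomic route is both simpler and actually proves the stated claim.
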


\begin{corollary}[Bezrukov and Els\"asser \cite{BE_Regular} for $s=2$,
Bezrukov, Bulatovic and Kuzmanovski for arbitrary $s\geq 2$]\label{hspi}
Suppose that $s\geq 2$, $p\geq 3$ and $1\leq i \leq p-i$ and let
$G$ be an isoperimetric graph such that
\begin{align*}
	\delta_G= (
	\underline{0,\dots, p-1},
	\underline{p-i,\dots,p-i+(p-1) },\dots ,
	\underline{(s-1)(p-i),\dots, (s-i)(p-1)+p-1}).
\end{align*}
The lexicographic order is optimal for $G^d$, for any $d\geq 2$.
\end{corollary}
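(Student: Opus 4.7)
The plan is to derive Corollary \ref{hspi} from Corollary \ref{lgb_Atomic} applied with the atomic partition $\mathfrak{A}_G$ on each factor. Singleton partitions are trivially non-decreasing and regular (each block has $\delta$-sequence $(0)$), and, as observed right after Corollary \ref{lgb_Atomic}, the block-lexicographic order induced by atomic partitions coincides with the ordinary lexicographic order $\mathcal{L}^d_{G^d}$. Hence, once the two-dimensional base case --- that $\mathcal{L}^2_{G\square G}$ is optimal --- is in hand, Corollary \ref{lgb_Atomic} will immediately give optimality of $\mathcal{L}^d_{G^d}$ for all $d\geq 3$, completing the claim for every $d\geq 2$.

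For the $d=2$ base case I would work directly on $G\square G$ and exploit the standard monotonic partition $\mathfrak{M}_G$. By the given $\delta$-sequence and Theorem \ref{standardPartition}, $\mathfrak{M}_G$ decomposes $V_G$ into $s$ cliques of size $p$, with each vertex in the $(k{+}1)$-st clique having exactly $p-i$ neighbors in the union of all earlier cliques. All monotonic segments have length $p$ and identical $\delta$-sequence $(0,1,\dots,p-1)$, so $\mathfrak{M}_G$ is non-decreasing and regular. Each block in $G\square G$ is then isomorphic to $K_p\square K_p$, on which the lex order is optimal by Corollary \ref{completeProd}. Specializing the pull-push reductions of Theorem \ref{sliceCompression} and Theorem \ref{last_of_final_proof} to $d=2$ would then transform any strongly compressed optimal set into a block-compressed one, after which the single partially filled block can be adjusted to an initial segment of the lex order by invoking Lindsey's theorem on that block.

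The hard part is exactly this base case, because the reduction via Theorem \ref{mainResult} requires nothing beyond it for $d\geq 3$. It is here that the arithmetic hypothesis $1\leq i\leq p-i$ does its work: a vertex imported from a new segment contributes at most $p-i$ edges to earlier segments, while completing an unfinished earlier segment yields up to $p-1$ edges, so the push step of the pull-push exchange can never decrease $|I(A)|$. Without $2i\leq p$ the weights would push in the opposite direction and a different order would be optimal. The $s=2$ instance of this two-dimensional statement is established in \cite{BE_Regular} and the general $s\geq 2$ instance in the Bezrukov--Bulatovic--Kuzmanovski work cited in the corollary, so in the write-up I would invoke those references as a black box for $d=2$ and then simply apply Corollary \ref{lgb_Atomic} to lift to arbitrary $d$.
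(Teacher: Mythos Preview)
Your proposal is correct and matches the paper's approach: the paper likewise presents this corollary as an immediate consequence of Corollary~\ref{lgb_Atomic} (equivalently, the Ahlswede--Cai local-global principle), with the two-dimensional base case supplied by the cited references \cite{BE_Regular} and \cite{BBK}. One caution about your middle paragraph: the suggestion to ``specialize'' Theorems~\ref{sliceCompression} and~\ref{last_of_final_proof} to $d=2$ does not go through as stated, since their proofs use $d\geq 3$ essentially (for instance, Lemma~\ref{nonEmptyImpliesSkeleton} requires $d-2\geq 1$ to find a shared coordinate, and Lemma~\ref{cubesInSliceConsecutive} invokes $d\geq 3$ explicitly); handling $d=2$ would require a separate argument, which is exactly why both you and the paper defer that case to the literature.
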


It is important to note that graphs with $\delta$-sequences specified
in Corollary \ref{hspi} exist. Such graphs are constructed in \cite{BBK} 
by a method involving the join operation on graphs. In case $s=2$ one can 
construct such a graph by removing $i$ disjoint perfect matchings form $K_{2p}$.
There are even more graphs studied in \cite{BE_Regular} and \cite{BBK} 
to which the local-global principle is applicable. 
Below we present several results that do not follow from the theorem of 
Ahlswede-Cai. For this we need to define the standard block-lexicographic 
order.

Let $G_1,\dots, G_d$ be isoperimetric graphs along with their 
standard partitions $\mathfrak{M}_{G_1},\dots,\mathfrak{M}_{G_d}$.
Recall from Theorem \ref{standardPartition} that the subgraphs induced by
monotonic sets are cliques. Hence, a subgraph of 
$G=G_1\square\cdots\square G_d$ induced by every block is isomorphic to 
the product of cliques which admits nested solutions according to the result 
of Lindsey (cf.  Corollary \ref{completeProd}). In fact, the optimal order 
is a domination order! We make it more explicit now.

Let $S\subseteq \{1,\dots d\}$ and $B= V_{K_{n_1}} \times \cdots \times
V_{K_{n_{|S|}}}$ be a block of $G_S$.
Define a total order $\eta: \{0,1\dots, |S|\}\rightarrow \{0,1,\dots |S|\}$
as follows: for $i,j\in \{1,\dots, |S|\}$ we say $i <_{\eta} j$ iff
$n_i<n_j$ or if $n_i=n_j$ and $i < j$.
Therefore, since $1\leq n_{\eta(1)}\leq \cdots \leq n_{\eta(d)}$
the lexicographic order is optimal for 
$K_{n_{\eta(1)}}\square \cdots \square K_{n_{\eta(d)}}$.
Note that $\eta \in \mathfrak{S}_d$
and there is a unique domination order $\mathcal{D}^{\eta, |S|}$.
So, for $K=K_{n_1}\square \cdots \square K_{n_{|S|}}$
we have the induced domination order $\mathcal{D}_K^{\eta, |S|}$ on $K$.
Furthermore, the order $\mathcal{D}_K^{\eta, |S|}$ is optimal,
since lexicographic order is optimal for 
$K_{n_{\eta(1)}}\square \cdots \square K_{n_{\eta(d)}}$.

Hence, for any block $B$ we can construct a permutation $\eta$ for which there is an optimal domination order on the graph
induced by a block.
It is easily seen that $\mathfrak{M}_{G_1}, \dots, \mathfrak{M}_{G_d}$
is a domination collection with these domination orders.
We call $\mathcal{D}^{\eta, |S|}_K$ the {\it standard block domination order}
on $B$.
Furthermore, we call the block lexicographic orders formed by 
the standard partitions $\mathfrak{M}_{G_1}, \dots, \mathfrak{M}_{G_d}$
and the standard block domination orders,
the {\it standard block lexicographic orders }
and for $S\subseteq \{1,\dots, d\}$ denote them by $\mathcal{SBL}_{G_S}^{|S|}$. 

Let us talk on the regularity of partitions for regular graphs.
Many authors in the past have noticed the following result.
\begin{lemma}\label{ComplementOptimal}
If $G=(V,E)$ is regular then $A\subseteq V$ is optimal iff $V\setminus A$ 
is optimal.
\end{lemma}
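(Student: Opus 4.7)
The plan is to use the identity from the first lemma in the paper together with the obvious fact that $\Theta(A) = \Theta(V\setminus A)$ as edge sets (a cut edge has one endpoint on each side, so ``crossing out of $A$'' and ``crossing out of $V\setminus A$'' describe the same edges). The key is to reduce the statement to an identity that makes $|I(V\setminus A)|$ an explicit function of $|I(A)|$ and $|A|$ only.

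First I would record the two instances of the regularity identity:
\begin{align*}
|\Theta(A)| + 2|I(A)| &= d|A|,\\
|\Theta(V\setminus A)| + 2|I(V\setminus A)| &= d(|V|-|A|).
\end{align*}
Since $|\Theta(A)| = |\Theta(V\setminus A)|$, subtracting gives
\begin{align*}
|I(V\setminus A)| - |I(A)| = \tfrac{d}{2}\bigl(|V|-2|A|\bigr),
\end{align*}
so the difference depends only on $|A|$ (equivalently, only on $|V\setminus A|$).

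Now fix $m = |A|$ and let $m' = |V|-m$. By the displayed identity, for every set $S\subseteq V$ with $|S|=m$ the quantity $|I(V\setminus S)| - |I(S)|$ takes the same value $c(m)$. Therefore, as $S$ ranges over subsets of size $m$, the map $S\mapsto V\setminus S$ is a bijection onto subsets of size $m'$ under which $|I|$ changes by the constant $c(m)$. Consequently $I(m') = I(m) + c(m)$, and a set of size $m$ achieves $I(m)$ if and only if its complement, which has size $m'$, achieves $I(m')$. This is exactly the claim.

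The argument is entirely bookkeeping with the first lemma; there is no real obstacle, and the only thing worth stating carefully is the equality $|\Theta(A)| = |\Theta(V\setminus A)|$ which holds for any graph (not just regular ones). Regularity is used only to translate between $|\Theta|$ and $|I|$ via the stated identity.
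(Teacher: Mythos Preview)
Your argument is correct. The paper does not actually supply a proof of this lemma; it is stated as a well-known fact (``Many authors in the past have noticed the following result''), so there is nothing to compare against, and your derivation via Lemma~1.1 and $\Theta(A)=\Theta(V\setminus A)$ is exactly the standard justification.
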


The above lemma provides a relationship between regular graphs and their
regular partitions.
\begin{corollary}\label{standardPartIsRegular}
Let $G=(V,E)$ be a regular isoperimetric graph with optimal order 
${\mathcal O}_G$ and let ${\mathcal O}_G'$ be its reverse order. 
Then the order ${\mathcal O}_G'$ is optimal and the partition $\mathfrak{M}_G$ 
is regular.
\end{corollary}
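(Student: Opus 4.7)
The claim splits into two parts: optimality of $\mathcal{O}_G'$, and regularity of $\mathfrak{M}_G$. The first part is immediate from the complement principle. The initial segments of $\mathcal{O}_G'$ are precisely the complements of initial segments of $\mathcal{O}_G$, and by Lemma \ref{ComplementOptimal} (which applies since $G$ is regular), complements of optimal sets are optimal. So every initial segment of $\mathcal{O}_G'$ is optimal, and $\mathcal{O}_G'$ is an optimal order.

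For regularity of $\mathfrak{M}_G$, my plan is to first extract a pointwise duality for the $\delta$-sequence. Applying the regularity identity $|\Theta(A)| + 2|I(A)| = d|A|$ to both an optimal initial segment $A$ and its complement, and using $\Theta(A) = \Theta(V\setminus A)$, one obtains $I_G(n-m) - I_G(m) = \tfrac{dn}{2} - dm$ for every $m$, where $n = |V|$. Taking a discrete derivative in $m$ yields
\begin{align*}
\delta_G(k) + \delta_G(n - k + 1) = d \quad \text{for every } k \in \{1,\dots,n\}.
\end{align*}
So the reversed $\delta$-sequence is just $d$ minus the original.

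Next, I would pin down the last monotonic segment of $\mathfrak{M}_G$ using this duality. Writing $b_1$ for the size of the first monotonic set $\mathcal{O}_G[1,b_1]$, its $\delta$-values are $(0,1,\dots,b_1-1)$, so the duality forces the last $b_1$ entries of $\delta_G$ to be $(d-b_1+1,\dots,d)$, an arithmetic progression with step $1$. The definition of a monotonic segment gives $\delta_G(b_1+1) \leq b_1-1$, and the duality turns this into $\delta_G(n-b_1) \geq d - b_1 + 1 = \delta_G(n-b_1+1)$, so the increment $\delta_G(n-b_1+1) - \delta_G(n-b_1) \leq 0 < 1$. This is exactly the ``break'' condition that starts a new monotonic segment at position $n-b_1+1$. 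Hence the last monotonic set has cardinality $b_1$. (In the degenerate case $b_1 = n$, $G$ is itself a clique and $|\mathfrak{M}_G|=1$, so the conclusion is trivial.)

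To finish, Theorem \ref{standardPartition} identifies both the first and last monotonic sets as cliques, and cliques of equal order have identical $\delta$-sequences, namely $(0,1,\dots,b_1-1)$. This is precisely the regularity condition. The main obstacle, in my view, is the middle step: one has to be careful not to conflate ``the reversed sequence has consecutive differences of $1$'' with ``the reversed run is the last maximal monotonic segment of $\delta_G$''. Verifying this cleanly requires using both the duality and the maximality clauses in the definition of a monotonic segment, as done above.
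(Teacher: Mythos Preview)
Your proof is correct and follows the same overall strategy as the paper: use Lemma~\ref{ComplementOptimal} to see that $\mathcal{O}_G'$ is optimal, and then deduce that the first and last monotonic segments match. The paper's own proof is a three-line sketch that jumps directly from ``$\mathcal{O}_G'$ is optimal'' to ``therefore the first and last monotonic segments of $\mathcal{O}_G$ have the same size'' with no further justification; you have supplied exactly the missing argument by deriving the pointwise duality $\delta_G(k)+\delta_G(n-k+1)=d$ and using it together with the maximality clause in the definition of a monotonic segment to pin down the last segment explicitly. You also close the final step from ``same size'' to ``same induced $\delta$-sequence'' via Theorem~\ref{standardPartition}, which the paper leaves implicit. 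So the approaches coincide conceptually, but your version is the fully worked-out one.
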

\begin{proof}
Note that for any $i\in \{1,\dots, V\}$ the set ${\mathcal O}_G[1,i]$ is optimal.
Thus, ${\mathcal O}_G'[1, |V|-i+1]$ is optimal for all $i\in \{1,\dots, |V|\}$
by Lemma \ref{ComplementOptimal}.
Therefore, the first and last monotonic segments of the order ${\mathcal O}_G$ 
are of the same size.
\end{proof}

By Corollary \ref{standardPartIsRegular}, if the graph $G$ is isoperimetric 
and regular, then $\mathfrak{M}_G$ is an isoperimetric, non-decreasing and 
regular partition. See \cite{BS} for properties of the $\delta$-sequences of 
regular graphs.

\begin{corollary}\label{mainResultCor}
Let $G_1,\dots, G_d$ be regular isoperimetric graphs with their respective
standard partitions $\mathfrak{M}_{G_1},\dots,\mathfrak{M}_{G_d}$ and
$G=G_1\square \cdots \square G_d$.
If $\mathcal{SBL}_{G_i\square G_j}$ is optimal for all $i<j$ then 
$\mathcal{SBL}_G^d$ is optimal for $d\geq 3$.
\end{corollary}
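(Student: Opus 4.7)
The plan is to verify that the hypotheses of Theorem \ref{mainResult} are met when we take $\mathfrak{P}_{G_i} = \mathfrak{M}_{G_i}$ for each $i$, paired with the standard block domination orders on every block. Since $\mathcal{SBL}_{G_i\square G_j}$ is, by construction, the instance of $\mathcal{BL}_{G_i\square G_j}$ arising from these choices, the optimality hypothesis of the corollary is exactly the two-dimensional local hypothesis required by the main theorem. Thus the entire task reduces to confirming the structural hypotheses on the partitions.

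First, each $\mathfrak{M}_{G_i}$ is non-decreasing, because by the very definition of a monotonic segment the $\delta$-values inside each segment increase by exactly one at every step; this is immediate and gives the first hypothesis of Theorem \ref{mainResult}. Next, that $\mathfrak{M}_{G_1},\dots,\mathfrak{M}_{G_d}$ together with the standard block domination orders form a domination collection was essentially argued in the paragraph preceding the corollary: every block of every subproduct is a Cartesian product of cliques (Theorem \ref{standardPartition}), and the permutation $\eta$ that sorts those cliques by size (with ties broken by index) yields an optimal domination order by Lindsey's theorem (Corollary \ref{completeProd}). The consistency condition under $S_1\subset S_2$ is immediate because the restriction of a size-sorting permutation on a larger coordinate set induces the same relative ordering on the smaller set.

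The crux of the verification is regularity of the collection. For $i\in\{2,\dots,d-1\}$ the graph $G_i$ is regular and isoperimetric, so Corollary \ref{standardPartIsRegular} tells us that the first and last monotonic segments of $G_i$ have the same length and hence induce isomorphic cliques. Consequently, the two boundary blocks $B_1$ and $B_2$ appearing in the definition of a regular domination collection are products of coordinate-wise identical clique sizes. Their standard block domination orders are therefore induced by the same size-sorting permutation $\eta$, which is the second requirement of regularity.

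With both hypotheses checked, Theorem \ref{mainResult} applies directly and yields optimality of $\mathcal{SBL}_G^d$ for $d\geq 3$. I do not anticipate a genuine obstacle: the argument is essentially bookkeeping to match the hypotheses of the main theorem to the standard-partition setup, and the one nontrivial input is Corollary \ref{standardPartIsRegular}, which converts the regularity of each $G_i$ into the required equality of boundary monotonic segments.
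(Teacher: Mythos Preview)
Your proof is correct and follows the same approach as the paper: verify that the standard monotonic partitions are non-decreasing (trivially) and regular (via Corollary \ref{standardPartIsRegular}), check that the standard block domination orders make $\mathfrak{M}_{G_1},\dots,\mathfrak{M}_{G_d}$ a regular domination collection, and then invoke Theorem \ref{mainResult}. The paper's own proof is a terse three-line version of exactly this argument, so your more detailed verification is a faithful expansion of it.
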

\begin{proof}
We have that $\mathfrak{M}_{G_i}$ is non-decreasing and regular.
Thus, $\mathfrak{M}_{G_1}, \dots, \mathfrak{M}_{G_d}$
is a regular domination collection by the definition of the standard 
block-domination order. The statement then follows from 
Theorem \ref{mainResult}.
\end{proof}

Taking all this into account, we show how to obtain some results which are
not covered by the theorem of Ahlswede-Cai.

\begin{corollary}[Bezrukov-Das-Elsässer \cite{BDE_Petersen}]\label{petersenCor}
Let $G_1$ be the Petersen graph, $G_2=K_2$, and $d_1,d_2\geq 0$. Then
$G=G_1^{d_1} \square G_2^{d_2}$ has nested solutions and
the standard block-lexicographic order is optimal.
\end{corollary}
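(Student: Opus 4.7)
The plan is to apply Corollary \ref{mainResultCor} to $G = G_1^{d_1} \square G_2^{d_2}$. After disposing of the degenerate cases, the hypotheses reduce to optimality of a small number of two-dimensional base cases, each already established in the literature.

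First I would handle the small cases. If $d_1 = 0$, then $G = K_2^{d_2}$ is covered by Theorem \ref{cube}; if $d_2 = 0$, then $G = G_1^{d_1}$ is covered by Theorem \ref{BDE_Petersen}; and if $d_1 + d_2 \leq 2$, the claim is either trivial (a single vertex, a single Petersen, or a single $K_2$) or is one of the three two-dimensional base cases $G_1\square G_1$, $G_1\square G_2$, $K_2\square K_2$ addressed in \cite{BDE_Petersen, BDE_PetersenCubes}. So assume $d_1,d_2\geq 1$ and $d := d_1+d_2 \geq 3$.

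Next I would verify that the standard monotonic partitions $\mathfrak{M}_{G_1}$ and $\mathfrak{M}_{G_2}$ satisfy the hypotheses of Corollary \ref{mainResultCor}. Both the Petersen graph and $K_2$ are regular isoperimetric graphs, so by Corollary \ref{standardPartIsRegular} both $\mathfrak{M}_{G_1}$ and $\mathfrak{M}_{G_2}$ are regular. By Theorem \ref{standardPartition}, each monotonic set induces a clique, so both partitions are automatically non-decreasing. Equipping each block of $G$ with its standard block-domination order (which exists by Lindsey's theorem, since each block is a product of cliques) then yields a regular domination collection in the sense of Theorem \ref{mainResult}.

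The only remaining step is to check optimality of the three two-dimensional base cases: $\mathcal{SBL}^2$ on $K_2\square K_2$, on $G_1\square K_2$, and on $G_1\square G_1$. The first follows from Harper's Theorem \ref{cube}, since the standard partition of $K_2$ has a single block and so $\mathcal{SBL}$ reduces to the ordinary lexicographic order on the $2$-cube. The other two are precisely the $d=2$ cases of Theorems \ref{BDE_Petersen} and \ref{BDE_PetersenCubes}, respectively. The main obstacle is verifying that the optimal orders exhibited in \cite{BDE_Petersen} coincide with the standard block-lexicographic orders defined here through the partition $\delta_{\text{Petersen}} = (\underline{0,1},\underline{1},\underline{1,2},\underline{1,2},\underline{2},\underline{2,3})$. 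This is a routine but case-heavy comparison, since the Petersen blocks are edges and single vertices, so the two-dimensional products decompose into $2\times 2$, $2\times 1$, $1\times 2$, and $1\times 1$ blocks whose ordering must be matched against the orders constructed in \cite{BDE_Petersen, BDE_PetersenCubes}. Once this identification is in place, Corollary \ref{mainResultCor} delivers optimality of $\mathcal{SBL}_G^d$, which in turn yields nested solutions for $G$.
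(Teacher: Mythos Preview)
Your approach is essentially the same as the paper's: reduce to Corollary \ref{mainResultCor} and verify the three two-dimensional base cases $G_1\square G_1$, $G_1\square G_2$, $G_2\square G_2$. The paper's proof is more terse---it simply cites \cite{BDE_Petersen} and Chapter~9 of \cite{H_Book} for the nontrivial 2D cases and invokes Corollary \ref{mainResultCor} directly, without separating out $d_1=0$ or $d_2=0$ (these are already subsumed once the three base cases are in hand, since Corollary \ref{mainResultCor} applies uniformly for $d\geq 3$). Your explicit caveat about matching the orders of \cite{BDE_Petersen} with $\mathcal{SBL}$ is well taken; the paper handles this by pointing to Harper's book rather than by a direct block-by-block comparison.
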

\begin{proof}
One just needs to show that the standard block-lexicographic order is 
optimal for $G_1^2$, $G_1\square G_2$ and $G_2^2$.
This is trivial for $G_2^2$ and the other cases are covered in Chapter 9 of 
\cite{H_Book}, and in \cite{BDE_Petersen}.
Therefore, the statement follows from Corollary \ref{mainResultCor}.
\end{proof}

The authors of \cite{BDE_Petersen} first proved Corollary \ref{petersenCor} 
for $d_1\geq 0$ and $d_2=0$. 
The general case required more work.
The authors had to define a new order and handle multiple new cases.
In the next results the 
two-dimensional cases needed for Corollary \ref{mainResultCor} are covered
in \cite{C_Tori}. 

\begin{corollary}[Carlson \cite{C_Tori}]\label{tori1}
If $d_1,d_2,d_3\geq 0$ then the standard block-lexicographic order is 
optimal for $C_5^{d_1}\square C_4^{d_2}\square C_3^{d_3}$.
\end{corollary}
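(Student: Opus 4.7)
The plan is to invoke Corollary \ref{mainResultCor} with the factor list consisting of $d_1$ copies of $C_5$, $d_2$ copies of $C_4$, and $d_3$ copies of $C_3$. For that, I would first verify the structural hypotheses: each $C_n$ is $2$-regular, and for the induced-edges problem on a cycle any $m$ consecutive vertices form an optimal set, so $C_3$, $C_4$, $C_5$ are regular isoperimetric graphs. By Corollary \ref{standardPartIsRegular}, each standard monotonic partition $\mathfrak{M}_{C_n}$ ($n=3,4,5$) is then isoperimetric, non-decreasing, and regular, so the resulting collection built from copies of these three partitions is a regular domination collection in the sense needed by Theorem \ref{mainResult} and its specialization Corollary \ref{mainResultCor}.

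Next I would dispose of the low-dimensional cases. If $d_1+d_2+d_3\le 1$ the statement reduces to the optimality of the initial-segment order on a single cycle, which is classical. If $d_1+d_2+d_3=2$, the Cartesian product is one of the six two-factor combinations $C_i\square C_j$ with $i,j\in\{3,4,5\}$, and for each of these Carlson proves in \cite{C_Tori} that the standard block-lexicographic order is optimal. Hence the conclusion holds directly in dimension at most two.

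For $d_1+d_2+d_3\ge 3$, I would apply Corollary \ref{mainResultCor} directly. Its hypothesis requires that $\mathcal{SBL}_{G_i\square G_j}$ be optimal for every pair of factors appearing in the product; but as a multiset the unordered pairs drawn from $\{C_5^{d_1},C_4^{d_2},C_3^{d_3}\}$ are precisely those in $\{C_3\square C_3,\,C_4\square C_4,\,C_5\square C_5,\,C_3\square C_4,\,C_3\square C_5,\,C_4\square C_5\}$, which are exactly the cases already handled by Carlson in the previous paragraph. Corollary \ref{mainResultCor} then yields that $\mathcal{SBL}_G^{d_1+d_2+d_3}$ is optimal on $G=C_5^{d_1}\square C_4^{d_2}\square C_3^{d_3}$, giving nested solutions as claimed.

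The only real work lies in the two-dimensional base cases; these involve computing with the specific $\delta$-sequences of $C_3$, $C_4$, $C_5$ and are genuinely outside the scope of the local-global machinery developed here. Once those six products are established — as they are in \cite{C_Tori} — the passage to arbitrary $d_1,d_2,d_3$ is an immediate corollary of the main theorem, since no further induction or case analysis on $d$ is needed beyond what is already built into Theorem \ref{mainResult}.
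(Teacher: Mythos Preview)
Your proposal is correct and follows exactly the approach the paper takes: the paper simply remarks that the two-dimensional cases needed for Corollary~\ref{mainResultCor} are covered in \cite{C_Tori} and then states the corollary without further argument. Your write-up is more detailed (explicitly verifying regularity, handling the trivial low-dimensional cases, and enumerating the six relevant $C_i\square C_j$ products), but the underlying logic is identical.
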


\begin{corollary}[Carlson \cite{C_Tori}]\label{tori2}
If $d_1,d_2,d_3\geq 0$ and $n\geq 6$
then the standard block-lexicographic order is optimal for 
$C_n\square C_5^{d_1}\square C_4^{d_2}\square C_3^{d_3}$.
\end{corollary}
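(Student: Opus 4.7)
The plan is to derive Corollary \ref{tori2} directly from Corollary \ref{mainResultCor}, applied to the factorization of $G = C_n\square C_5^{d_1}\square C_4^{d_2}\square C_3^{d_3}$. First I would observe that every cycle $C_m$ with $m\geq 3$ is $2$-regular and isoperimetric (initial segments of $k$ consecutive vertices around the cycle are optimal), so Corollary \ref{standardPartIsRegular} ensures that each standard partition $\mathfrak{M}_{C_m}$ is non-decreasing and regular. A quick inspection of the $\delta$-sequences shows that the first and last monotonic parts of $\mathfrak{M}_{C_m}$ are isomorphic cliques for every $m\geq 3$, so the standard block-domination orders on the blocks $B_1$ and $B_2$ appearing in the definition of a regular domination collection are induced by the same permutation. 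Hence the collection $\mathfrak{M}_{C_n},\mathfrak{M}_{C_5},\mathfrak{M}_{C_4},\mathfrak{M}_{C_3}$ meets the structural hypotheses of Corollary \ref{mainResultCor}.

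Next I would verify the two-dimensional hypothesis, namely that $\mathcal{SBL}_{G_i\square G_j}^2$ is optimal for every pair of factors of $G$. Up to isomorphism the two-factor products that arise are $C_a\square C_b$ with $a,b\in\{3,4,5\}$, together with $C_n\square C_k$ for $k\in\{3,4,5\}$ (since only one copy of $C_n$ appears, the case $C_n\square C_n$ does not occur). The first family is exactly the set of base cases Carlson handled en route to Corollary \ref{tori1}, and the remaining products $C_n\square C_k$ with $n\geq 6$ are likewise verified in \cite{C_Tori}, as announced in the paragraph preceding Corollary \ref{tori1}. Corollary \ref{mainResultCor} then yields optimality of $\mathcal{SBL}_G^d$ on $G$ whenever the total dimension $d=1+d_1+d_2+d_3$ is at least $3$, with the small cases $d\leq 2$ falling back on the same two-dimensional inputs or being trivial.

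The main obstacle is concentrated entirely in the two-dimensional step: the products $C_n\square C_k$ with $n\geq 6$ are the genuinely new ingredient, because $C_n$ has several nontrivial monotonic parts and the optimal sets there are initial segments of a genuine block-lexicographic order rather than of any pure domination order. Once this two-dimensional input from \cite{C_Tori} is accepted, the $d$-dimensional statement falls out mechanically from the pull-push machinery of the main theorem.
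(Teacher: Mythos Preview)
Your proposal is correct and follows essentially the same route as the paper: the paper simply remarks that the two-dimensional cases needed for Corollary~\ref{mainResultCor} are covered in \cite{C_Tori} and then states Corollary~\ref{tori2} as an immediate consequence. Your write-up merely unpacks a bit more of the verification (regularity of the standard partitions, the matching first/last monotonic parts, enumeration of the relevant two-factor products), all of which is already absorbed into the hypotheses of Corollary~\ref{mainResultCor} and the citation of \cite{C_Tori}.
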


It is worse to note that in \cite{C_Tori} Carlson
first proved Theorem \ref{C_Cycle}. Then after several auxiliary results 
and multiple statements about higher dimensions he finally established the
results outlined in Corollaries \ref{tori1} and \ref{tori2}.
Throughout these steps Carlson needed to define some special orders.
In contrast to this, in our case we need just one standard 
block-lexicographic order.
A similar situation occurred in \cite{BDE_Petersen} by proving 
Corollary \ref{petersenCor}.
Our methods allow us to further extend the results of Carlson.

\begin{corollary}
If $d_1,d_2,d_3,d_4\geq 0$ and $n\geq 6$
then the standard block-lexicographic order is optimal for 
$C_5^{d_1}\square C_4^{d_2}\square K_2^{d_3}\square C_3^{d_4}$ and
$C_n\square C_5^{d_1}\square C_4^{d_2}\square K_2^{d_3}\square C_3^{d_4}$.
\end{corollary}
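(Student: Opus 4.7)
The plan is to invoke Corollary \ref{mainResultCor}. Each factor $C_3,C_4,K_2,C_5,C_n$ is vertex-transitive, hence regular, and each is known to be isoperimetric (the cycles via the analysis underlying Carlson's Theorem \ref{C_Cycle} and Corollary \ref{tori1} in \cite{C_Tori}, and $K_2$ trivially). Once this is noted, the only substantive task is to verify that $\mathcal{SBL}_{G_i\square G_j}^{2}$ is optimal for every unordered pair of factors appearing in either of the two product graphs.

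The pairs split naturally into two groups. When both factors are cycles, i.e., $C_a\square C_b$ with $a,b\in\{n,5,4,3\}$, optimality of the standard block-lex order is already contained in the proofs of Carlson's Corollaries \ref{tori1} and \ref{tori2}: these are precisely the 2-dimensional sub-cases his inductive arguments in \cite{C_Tori} rely upon. When one factor is $K_2$, the pair $K_2\square K_2=C_4$ reduces to Theorem \ref{cube} and $K_2\square C_3=K_2\square K_3$ is covered by Lindsey's Corollary \ref{completeProd}, leaving three prism products: $K_2\square C_4$ (isomorphic as a graph to the 3-cube $K_2^3$), $K_2\square C_5$, and $K_2\square C_n$ for $n\geq 6$.

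The main obstacle is verifying $\mathcal{SBL}^2$-optimality for these three prism cases. Fortunately, the block structure is benign: since $\mathfrak{M}_{K_2}$ is a single part and every part of $\mathfrak{M}_{C_k}$ induces either $K_1$ or $K_2$, each block of $K_2\square C_k$ is of one of the tiny shapes $K_2$ or $K_2\square K_2\cong C_4$, and the standard block-domination order on such a block is uniquely determined by Lindsey's result. My plan for these base cases is a direct pull-push argument in the spirit of the 2-dimensional treatment in \cite{BDE_Petersen} of $K_2\square\mathrm{Petersen}$: start from a strongly compressed optimal set, apply Lemmas \ref{nonEmptyImpliesSkeleton} and \ref{sharedSegments} to reduce to at most a couple of partially-filled blocks, and then invoke Lemma \ref{deltaInPartition} together with the non-decreasing regular structure of $\delta_{C_k}$ to transport this set to an initial segment of $\mathcal{SBL}^2$ without decreasing the induced-edge count. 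I expect the argument for $K_2\square C_n$ to be uniform in $n$, since $\delta_{C_n}$ differs from $\delta_{C_5}$ only by inserting additional monotonic singletons of value $1$, each producing only a trivial $K_2$-block that cannot obstruct the transport. With the prism cases settled, all hypotheses of Corollary \ref{mainResultCor} hold for both products in the statement and the conclusion follows for all $d_1,d_2,d_3,d_4\geq 0$ and $n\geq 6$.
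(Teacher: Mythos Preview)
Your overall strategy---invoke Corollary \ref{mainResultCor} and verify the two-dimensional base cases---is exactly what the paper does, and your treatment of the cycle-cycle pairs via Carlson is the same. The substantive difference is in how the $K_2$ pairs are handled. You propose a direct pull-push verification for the prisms $K_2\square C_4$, $K_2\square C_5$, $K_2\square C_n$, mimicking the two-dimensional analysis in \cite{BDE_Petersen}. The paper instead dispatches \emph{all} of the $K_2$ pairs in one line: since an initial segment of size $2$ in any connected graph induces $K_2$, every set $A\subseteq V(K_2\square C_m)$ can be viewed inside $V(C_k\square C_m)$ (for any $k\geq 3$) with the same induced-edge count, and the initial segments of $\mathcal{SBL}^2_{C_k\square C_m}$ of size at most $2|V(C_m)|$ coincide with those of $\mathcal{SBL}^2_{K_2\square C_m}$. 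Thus Carlson's two-dimensional results already contain the prism cases, and no new two-dimensional work is needed. This is considerably shorter than what you outline, and it avoids a subtle issue with your plan: Lemmas \ref{nonEmptyImpliesSkeleton} and \ref{sharedSegments} as proved in the paper use $d-2\geq 1$, so they do not apply verbatim when $d=2$; you would have to reprove suitable two-dimensional analogs before your transport argument could proceed.
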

\begin{proof}
We just need to check the two-dimensional cases of Corollary
\ref{mainResultCor} for the product of every graph in question with $K_2$.
They, however, follow from the two-dimensional cases of \cite{C_Tori} 
since an initial segment of size $2$ in any connected graph induces $K_2$.
Also, note that $C_3$ is isomorphic to $K_3$.
Therefore, the statement follows from \ref{mainResultCor}.
\end{proof}

\section{Concluding remarks and future directions}
In this paper we generalized the Ahlswede-Cai Theorem
and proved that almost all results in the area of edge-isoperimetric problems
on Cartesian products of graphs are consequences of our main result.
In turn, our result can be generalized like it is done in the original
local-global principle paper \cite{AC_LGB, AC_SMF}.
In particular, Theorem \ref{mainResult} has an analog for $\Theta_G$.
In general, there is an analog of Theorem \ref{mainResult}
for any sub-modular function defined in \cite{AC_LGB,AC_SMF}.
We only worked with $I_G$ to make the paper easier to read.

Block-lexicographic orders are made up of two parts:
the block part and the lexicographic part.
One can think of defining block-domination orders
similarly to how we handled domination orders
by starting with lexicographic order.
A natural question is if we can go even further.
If we have some $d$-dimensional order $\mathcal{O}$,
can we say anything on the optimality of a block order where the blocks are
ordered according to the order $\mathcal{O}$?
What properties does $\mathcal{O}$ have to satisfy?
Note that the optimal orders in Theorems \ref{BL_Grid} and \ref{AB_Trees}
are block orders, but not lexicographic ones.
The orders use the standard partitions for the blocks.
We would be surprised if the local-global principles could not be extended to
such orders. Answering these questions would settle Harper's question 
completely. Here is a conjecture that we believe would help to find answers
to the posed questions.

\begin{conjecture}
If $d_1,d_2\geq 0$ and $n_1,n_2\geq 2$ 
then $P_{n_1}^{d_1}\square K_{n_2}^{d_2}$ has nested solutions.
\end{conjecture}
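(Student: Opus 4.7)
The plan is to proceed by induction on $d_1$. The case $d_1 = 0$ reduces to $K_{n_2}^{d_2}$, covered by Lindsey's theorem (Corollary \ref{completeProd}); the case $d_2 = 0$ is the Bollob\'as--Leader theorem (Theorem \ref{BL_Grid}). So assume $d_1, d_2 \geq 1$ and that the conjecture holds for $H := P_{n_1}^{d_1-1} \square K_{n_2}^{d_2}$; let $\mathcal{O}_H$ be an optimal order on $H$ and set $f(k) := |I_H(\mathcal{O}_H[k])|$. Write $G = P_{n_1} \square H$, so that $V(G)$ partitions into $n_1$ copies $H^{(1)}, \ldots, H^{(n_1)}$ of $H$, with matching edges joining corresponding vertices of adjacent $H^{(i)}, H^{(i+1)}$.

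The first step is a reduction to \emph{profile-type} sets. By compressing an arbitrary optimal $A \subseteq V(G)$ within each copy $H^{(i)}$ (using Lemma \ref{downIsBetter} and optimality of $\mathcal{O}_H$), one obtains a set of the same size and no fewer induced edges whose intersection with each $H^{(i)}$ is an initial segment of $\mathcal{O}_H$ of some size $c_i$. Such a compressed set is encoded by its profile $\mathbf{c} = (c_1, \ldots, c_{n_1})$, and its number of induced edges equals
\begin{align*}
F(\mathbf{c}) := \sum_{i=1}^{n_1} f(c_i) + \sum_{i=1}^{n_1-1} \min(c_i, c_{i+1}).
\end{align*}
It thus suffices to show that, for each $m$, the maximum of $F(\mathbf{c})$ over profiles with $\sum_i c_i = m$ and $0 \leq c_i \leq |V(H)|$ is attained by a profile $\mathbf{c}(m)$, and that these profiles can be chosen to form a nested chain --- with $\mathbf{c}(m+1)$ obtained from $\mathbf{c}(m)$ by incrementing a single coordinate.

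The second step --- this nested-profile claim --- is the main obstacle. The natural route is an exchange argument in the spirit of Bollob\'as--Leader: first establish that some optimal $\mathbf{c}(m)$ is unimodal (so that, up to reflection, $c_1 \geq c_2 \geq \cdots \geq c_k \leq c_{k+1} \leq \cdots$), then decrement at the appropriate extreme coordinate. The key structural input is $f(k+1) - f(k) = \delta_H(k+1)$, which by Lemma \ref{deltaSeqBound} has increments at most $1$ and inherits further structure from the inductive hypothesis. The chief difficulty is that, unlike the pure grid case, $\delta_H$ is \emph{not} globally bounded by $1$: the clique factors $K_{n_2}^{d_2}$ contribute potentially large degree jumps, so the calculus arguments of Bollob\'as--Leader must be replaced by a discrete analysis that exploits the monotonic-segment decomposition of $\mathcal{O}_H$. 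A more ambitious alternative would be to extend Theorem \ref{mainResult} by permitting inter-block orderings that are themselves nested grid orders rather than lexicographic orderings on starts; this would make the conjecture a direct corollary after verifying the two-dimensional seed $P_{n_1} \square K_{n_2}$, at the cost of reworking the pull-push lemmas of Section 5 to accommodate non-lex block orderings.
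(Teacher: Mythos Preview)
This statement is presented in the paper as a \emph{conjecture}, not a theorem: the authors explicitly pose it as an open problem in the concluding section, and there is no proof in the paper to compare against. So the relevant question is whether your proposal actually proves the conjecture.

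It does not. What you have written is a research outline, not a proof. The reduction to profile-type sets via compression is standard and correct, and the formula $F(\mathbf{c}) = \sum_i f(c_i) + \sum_i \min(c_i,c_{i+1})$ is right. But the entire content of the conjecture lies in your ``second step,'' and there you do not prove anything: you state that one should ``establish that some optimal $\mathbf{c}(m)$ is unimodal'' and then ``decrement at the appropriate extreme coordinate,'' but you carry out neither of these. You yourself flag the obstruction --- $\delta_H$ is not bounded by $1$ once clique factors are present, so the Bollob\'as--Leader exchange calculus does not transfer --- and then simply assert that it ``must be replaced by a discrete analysis that exploits the monotonic-segment decomposition of $\mathcal{O}_H$.'' That replacement is exactly what is missing, and it is the heart of the problem. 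Your alternative suggestion (extending Theorem~\ref{mainResult} to non-lexicographic block orderings) is precisely the direction the authors themselves point to in the same section as a route to settling Harper's question; it is not a proof either, but a restatement of the open problem.

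In short: the framework is reasonable and the base cases are correctly identified, but the inductive step is only described at the level of what one would \emph{like} to show, with the genuine difficulty acknowledged and left unresolved. This is consistent with the statement's status as a conjecture.
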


A lot of work has been done for the products of 3 or more graphs and very
powerful methods have been developed. However, the 2-dimensional case still 
requires a special treatment and no general methods are known for it. 
One possible direction could be to improve the upper bound 
for $i$ in Corollary \ref{hspi}. Examples show that it can be much higher. For
$s=2$ the following conjecture is proved in \cite{BE_Regular}.

\begin{conjecture}
Statement of Corollary \ref{hspi} is valid for $s\geq 2$, $p\geq 3$ and
$1\leq i \leq p - p/s$. For $i> p - p/s$ there are no nested solutions.
\end{conjecture}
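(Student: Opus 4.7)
The plan is to handle the two directions of the conjecture separately: establish the positive direction via Corollary \ref{mainResultCor} after settling the two-dimensional case, and establish the negative direction by exhibiting an explicit obstruction to nested solutions.

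For the positive direction, the graphs $G$ with the prescribed $\delta$-sequence (constructed in \cite{BBK}) are regular, so by Corollary \ref{standardPartIsRegular} the standard monotonic partition $\mathfrak{M}_G$ is non-decreasing and regular. Each block of $G^d$ with respect to this partition is isomorphic to a product of cliques, and by Lindsey's theorem (Corollary \ref{completeProd}) the standard block-domination order is optimal on each block. Thus $\mathfrak{M}_G,\dots,\mathfrak{M}_G$ form a regular domination collection, and the hypotheses of Corollary \ref{mainResultCor} are in place as soon as the two-dimensional case is verified: that $\mathcal{SBL}_{G^2}^{2}$ is optimal for every $1\le i\le p-p/s$. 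This is the main obstacle, since in two dimensions the local-global principle does not apply and the argument must be made directly.

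For the two-dimensional argument, I would first strongly compress a candidate optimal set $A\subseteq V_{G^2}$, then apply a two-dimensional analogue of the pull-push method from Section 5 to reduce $A$ to a configuration in which all but at most one block is either full or empty, and finally compare block by block with the initial segment of $\mathcal{SBL}_{G^2}^{2}$ of the same size. Interior-of-block contributions are controlled by Corollary \ref{completeProd}, while cross-block contributions are pinned down by clause~(2) of Theorem \ref{standardPartition}. The decisive inequality compares the gains from cross-segment edges (governed by the jump $p-i$ between the end of one monotonic segment and the start of the next) against losses inside incomplete cliques of size at most $\binom{p}{2}$; averaging this trade-off over the $s$ levels of $\mathfrak{M}_G$ should show the inequality holds precisely in the range $1\le i\le p-p/s$, which is exactly where the threshold of the conjecture enters.

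For the negative direction, I would exhibit two sizes $m<m'$ in $G^2$ whose optimal configurations are forced to have incompatible structures. A natural candidate is $m$ chosen near a segment boundary, where once $i>p-p/s$ the inequality above flips sign: it becomes strictly better to abandon completion of the current monotonic clique and jump into the next segment, producing an optimal set that is not an initial segment of any order which is also optimal at the larger size $m'$, thereby breaking the nested chain property. The hardest step of the whole program is the two-dimensional optimality inequality for the positive direction: even for $s=2$ and the restricted range $i\le p/2$, the two-dimensional case in \cite{BE_Regular,BBK} required graph-specific arguments, and pushing the threshold up to $p-p/s$ will require a more refined edge-count that correctly weighs the regularity of $\mathfrak{M}_G$ against the reduced jumps at segment boundaries for larger $i$.
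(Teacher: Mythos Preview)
The statement you are attempting to prove is a \emph{conjecture} in the paper, not a theorem: the authors give no proof, and they explicitly say that only the case $s=2$ is known, via \cite{BE_Regular}. So there is no ``paper's own proof'' to compare against; any complete argument here would be a new result.

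Your proposal is not a proof but a strategy, and you acknowledge this yourself. The reduction of the positive direction to the two-dimensional case via Corollary~\ref{mainResultCor} is correct and is exactly the reduction the paper's machinery provides. However, the two-dimensional case is the entire content of the conjecture, and your treatment of it is a heuristic outline: you describe a pull-push reduction to block-compressed sets and then assert that ``averaging this trade-off over the $s$ levels of $\mathfrak{M}_G$ should show the inequality holds precisely in the range $1\le i\le p-p/s$.'' No such inequality is stated or verified, and you concede in the final paragraph that this step ``will require a more refined edge-count.'' Likewise, for the negative direction you only name a candidate size $m$ and a plausible mechanism; you do not actually exhibit two incompatible optimal sets or compute the edge counts that would certify the obstruction. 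In short, the proposal correctly identifies where the difficulty lies and how the paper's framework reduces the higher-dimensional statement to dimension two, but it does not close the two-dimensional gap, which is precisely why the statement remains a conjecture.
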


The optimal order for the product of two Petersen graphs was
established by using a computer in \cite{BDE_Petersen}.
Harper developed a method in \cite{H_Book} which in many cases can be 
used without computers.
It would be interesting to develop general methods for the products of two
graphs to handle cases like the powers of Petersen graph.
To stimulate this we present another conjecture.

\begin{conjecture}
If $d_1,d_2,d_3,d_4d_5\geq 0$, $n\geq 6$ and $G$ is the Petersen graph,
then the standard block lexicographic order is optimal for 
$C_5^{d_1}\square G^{d_2}\square C_4^{d_3}\square K_2^{d_4}\square C_3^{d_5}$
and
$C_n\square C_5^{d_1}\square G^{d_2}\square C_4^{d_3}\square K_2^{d_4}\square C_3^{d_5}$.
\end{conjecture}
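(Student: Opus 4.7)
The plan is to reduce the conjecture to a finite list of two-dimensional cases via Corollary \ref{mainResultCor}. Every factor appearing in the products, namely $C_5$, the Petersen graph $G$, $C_4$, $K_2$, $C_3$, and $C_n$ for $n\geq 6$, is regular and isoperimetric. Hence, by Corollary \ref{standardPartIsRegular}, each of the standard monotonic partitions $\mathfrak{M}_{C_5}$, $\mathfrak{M}_G$, $\mathfrak{M}_{C_4}$, $\mathfrak{M}_{K_2}$, $\mathfrak{M}_{C_3}$, $\mathfrak{M}_{C_n}$ is isoperimetric, non-decreasing, and regular, and together they form a regular domination collection by the definition of the standard block-domination order. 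Thus, to conclude $\mathcal{SBL}^d_G$ optimality on the full product it suffices to verify optimality of $\mathcal{SBL}^{2}_{H_1\square H_2}$ for every unordered pair of factors $(H_1,H_2)$.

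Next I would catalogue the required two-dimensional cases and classify them according to whether they are already available in the literature. Every pair drawn from $\{C_3,C_4,C_5,C_n,K_2\}$ is handled by Carlson's work in \cite{C_Tori} and by the preceding corollary in Section 6, using the observation that an initial segment of size two in any connected graph induces $K_2$, which takes care of all cases containing $K_2$. The pairs $G\square K_2$ and $G\square G$ are proved in \cite{BDE_Petersen} and Chapter 9 of \cite{H_Book}. The genuinely new two-dimensional cases are therefore precisely
\[
G\square C_3,\quad G\square C_4,\quad G\square C_5,\quad G\square C_n\ (n\geq 6).
\]

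To attack these, I would exploit the standard partition
\[
\mathfrak{M}_G\ \leftrightarrow\ \delta_G=(\underline{0,1},\underline{1},\underline{1,2},\underline{1,2},\underline{2},\underline{2,3}),
\]
which partitions $V_G$ into six cliques of sizes $2,1,2,2,1,2$. In $G\square H$ each block of the product is then a subproduct of a clique with $H$, and Lindsey's theorem (Corollary \ref{completeProd}) provides the standard block-domination order inside each block. The plan for optimality of $\mathcal{SBL}^{2}_{G\square H}$ is the usual two-step reduction: first compress an optimal set to a strongly compressed one via Lemma \ref{downIsBetter} and Lemma \ref{compProcess}, then use a local pull-push exchange between two consecutive blocks of a stack, controlled by the weight identity of Corollary \ref{posetTrick} and Lemma \ref{deltaInPartition}, to show that any non-initial-segment compressed set can be pushed earlier in the $\mathcal{SBL}^2$ order without decreasing $|I|$. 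The computation of weight differences for Petersen is short because the $\delta$-values take only the four values $0,1,2,3$, so only finitely many local configurations at block boundaries need to be inspected.

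The main obstacle is precisely this two-dimensional step, and not the high-dimensional lifting. The authors repeatedly stress that no general method is known for handling 2D problems: each pair $G\square H$ must be analysed with the specific adjacency structure of $G$ and $H$ in mind, and $G$ contributes a monotonic partition with both clique-like pieces and nontrivial cross edges recorded by the jumps in $\delta_G$. In practice I would expect to need a careful hand analysis of the interaction between the six blocks of $G$ and the cyclic block structure of $C_3,C_4,C_5,C_n$ at each slice boundary, potentially supplemented by a finite computer verification along the lines of the $G\square G$ argument in \cite{BDE_Petersen}. Once these four 2D instances are settled, Corollary \ref{mainResultCor} applied inductively on $d=d_1+d_2+d_3+d_4+d_5$ (and, if desired, one further factor $C_n$) yields the conjecture for the full Cartesian product in all dimensions.
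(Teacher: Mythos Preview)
The statement you are addressing is presented in the paper as an open \emph{conjecture}, not a theorem; the paper offers no proof and explicitly poses it ``to stimulate'' the development of general two-dimensional methods. Consequently there is no proof in the paper to compare against.

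Your reduction strategy is exactly the one the paper's machinery is designed to support: apply Corollary~\ref{mainResultCor} to lift the problem to a finite list of two-dimensional base cases, observe that all pairs not involving the Petersen graph are covered by \cite{C_Tori} and the $K_2$ argument, that $G\square G$ and $G\square K_2$ are covered by \cite{BDE_Petersen} and \cite{H_Book}, and that the genuinely open cases are $G\square C_3$, $G\square C_4$, $G\square C_5$, and $G\square C_n$ for $n\geq 6$. This diagnosis is correct and matches the paper's own assessment that the two-dimensional step is the bottleneck.

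However, your proposal does not actually close the gap: for the four outstanding pairs you only sketch the intention to run a compression-plus-pull-push argument and, if necessary, a computer check, without carrying out either. That is precisely the work that remains open. In particular, the pull-push technique as developed in the paper is a tool for the induction step $d\geq 3$; it does not automatically settle two-factor products, where one must still verify optimality of $\mathcal{SBL}^2$ by hand for the specific adjacency structure. Until those four base cases are established, the conjecture stands, and your write-up is a (correct) outline of what would need to be done rather than a proof.
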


\bibliography{pull-push}

\end{document}